\numberwithin{table}{section}
\newtheorem{theorem}{Theorem}[section]
\newtheorem{lemma}[theorem]{Lemma}
\newtheorem{proposition}[theorem]{Proposition}
\newtheorem{definition}[theorem]{Definition}
\newtheorem{assumption}[theorem]{Assumption}
\newtheorem{remark}[theorem]{Remark}
\newcommand{\be}{\begin{equation}}
\newcommand{\ee}{\end{equation}}
\newcommand{\bee}{\begin{equation*}}
\newcommand{\eee}{\end{equation*}}
\newcommand{\bea}{\begin{eqnarray}}
\newcommand{\eea}{\end{eqnarray}}
\newcommand{\beaa}{\begin{eqnarray*}}
\newcommand{\eeaa}{\end{eqnarray*}}
\newcommand{\minimize}{\mathop{\textrm{min}}}
\newcommand{\st}{\textrm{s.t. }}
\newcommand{\xk}{x^k}
\newcommand{\xke}{x^{k+1}}
\newcommand{\xkm}{x^{k-1}}
\newcommand{\bD}{\mathbf{D}}
\newcommand{\bC}{\mathbf{C}}
\newcommand{\bT}{\mathbf{T}}
\newcommand{\ba}{\mathbf{a}}
\newcommand{\R}{\mathbb{R}}
\newcommand{\E}{{\bf\sf E}} 
\newcommand{\cM}{\mathcal{M}}
\newcommand{\cN}{\mathcal{N}}
\newcommand{\cT}{\mathcal{T}}
\newcommand{\cI}{\mathcal{I}}
\newcommand{\cL}{\mathcal{L}}
\newcommand{\dist}{\mathrm{dist}}
\newcommand{\grad}{\mathrm{grad}\, }
\newcommand{\Proj}{\mathrm{Proj}\, }
\newcommand{\RR}{\mathbb R}
\newcommand{\T}{\top}
\newcommand{\argmin}{\mathop{\rm argmin}}
\newcommand{\half}{\frac{1}{2}}
\newcommand{\etal}{{et al.\ }}
\begin{document}



\title{Primal-Dual Optimization Algorithms over Riemannian Manifolds: \\ an Iteration Complexity Analysis}

\author{
Junyu Zhang\thanks{Department of Industrial and System Engineering, University of Minnesota (zhan4393@umn.edu).}
\and
Shiqian Ma\thanks{Department of Mathematics, UC Davis (sqma@math.ucdavis.edu).}
\and
Shuzhong Zhang\thanks{Department of Industrial and System Engineering, University of Minnesota (zhangs@umn.edu).}
}

\date{October 5, 2017}

\maketitle

\begin{abstract}


In this paper we study nonconvex and nonsmooth multi-block optimization over Riemannian manifolds with coupled linear constraints. Such optimization problems naturally arise from machine learning, statistical learning, compressive sensing, image processing, and tensor PCA, among others. We develop an ADMM-like primal-dual approach based on decoupled solvable subroutines such as linearized proximal mappings. First, we introduce the optimality conditions for the afore-mentioned optimization models. Then, the notion of $\epsilon$-stationary solutions is introduced as a result. The main part of the paper is to show that the proposed algorithms enjoy an iteration complexity of $O(1/\epsilon^2)$ to reach an $\epsilon$-stationary solution. For prohibitively large-size tensor or machine learning models, we present a sampling-based stochastic algorithm with the same iteration complexity bound 
in expectation. In case the subproblems are not analytically solvable, a feasible curvilinear line-search variant of the algorithm based on retraction operators is proposed. Finally, we show specifically how the algorithms can be implemented to solve a variety of practical problems such as the NP-hard maximum bisection problem, the $\ell_q$ regularized sparse tensor principal component analysis and the community detection problem. Our preliminary numerical results show great potentials of the proposed methods.

\vspace{1cm}

\noindent {\bf Keywords:}
nonconvex and nonsmooth optimization, Riemannian manifold, $\epsilon$-stationary solution, ADMM, iteration complexity.

\end{abstract}

\vspace{0.5cm}

\section{Introduction} \label{introduction}

Multi-block nonconvex optimization with nonsmooth regularization functions has recently found important applications in statistics, computer vision, machine learning, and image processing. In this paper, we aim to solve a class of {\it constrained}\/ nonconvex and nonsmooth optimization models. To get a sense of the problems at hand, let us consider the following {\it Multilinear (Tensor) Principal Component Analysis}\/ (MPCA) 
model, which has applications in 3-D object recognition, music genre classification, and subspace learning (see e.g.~\cite{MPCA1,MPCA2}). Details of the model will be discussed in Section~\ref{sec:application}. It pays to highlight here that a sparse optimization version of the model is as follows:
\[
\begin{array}{ll}
\min_{C,U,V,Y} & \sum_{i=1}^{N} \| T^{(i)}-C^{(i)}\times_1 U_1\times\cdots\times_dU_d\|_F^2 + \alpha_1\sum_{i=1}^{N}
\| C^{(i)}\|_p^p+\alpha_2\sum_{j=1}^{d}\|V_j\|_q^q+\frac{\mu}{2}\sum_{j=1}^{d}\|Y_j\|^2 \\
\st  & C^{(i)}\in\RR^{m_1\times\cdots\times m_d},\, i = 1,...,N  \\
     & U_j\in\RR^{n_j\times m_j},\,  U_j^\T U_j = I, j = 1,...,d \\
     & V_j - U_j+Y_j=0, \, j = 1,...,d,
\end{array}
\]
where $T^{(i)}\in \RR^{n_1\times\cdots\times n_d}$, $0<p<1$, $0<q<1$, $\alpha_1,\alpha_2,\mu>0$ are weighting parameters. Essentially, one aims to find a Tucker decomposition of a given tensor in such a way that the orthogonal matrices are sparse. This can be naturally dealt with by a consensus-variable approach; see for example \cite{Stiefel:Lai2014}. The factor matrices are introduced both as $U_j$ and $V_j$. While $U_j$'s are orthogonal (hence constrained to the Stiefel manifolds) and $V_j$'s are sparse, they are forced to agree with each other. This way of variable splitting is a useful modeling technique. Note that a slack variable $Y_j$ is introduced to relax this requirement. We penalize the norm of $Y_j$ in the objective so that $U_j$ and $V_j$ do not need to exactly equal to each other. Notice that the objective function involves sparsity-promoting nonconvex  $\ell_q$ $(0<q<1)$ loss functions. Therefore, the overall model is noncovex and nonsmooth because of the sparsity promoting objective function, in addition to the manifolds constraints. As we shall see from more examples later, such formulations are found to be common for many applications.

In general, we consider the following model: 
\bea
\label{prob:main}
& \min & f(x_1,\cdots,x_N) + \sum_{i = 1}^{N-1} r_i(x_i) \nonumber\\
& \st & \sum_{i = 1}^{N} A_ix_i = b, \mbox{ with } A_N = I, \nonumber \\
& & x_N\in\RR^{n_N}, \\
& & x_i \in \mathcal{M}_i, ~~ i = 1,...,N-1, \nonumber \\
& & x_i \in X_i, ~~ i = 1,...,N-1, \nonumber
\eea
where $f$ is a smooth function with $L$-Lipschitz continuous gradient, but is possibly nonconvex; the functions $r_i(x_i)$ are convex but are possibly nonsmooth;  $\mathcal{M}_i$'s are Riemannian manifolds, not necessarily compact, embedded in Euclidean spaces; the additional constraint sets $X_i$ are assumed to be some closed convex sets. As we shall see later, the restrictions on $r_i$ being convex and $A_N$ being identity can all be relaxed, after a reformulation. For the time being however, let us focus on \eqref{prob:main}.

\subsection{ Related literature }

On the modeling front, nonsmooth/nonconvex regularization such as the $\ell_1$ or $\ell_q$ ($0<q<1$) penalties are key ingredients in promoting sparsity in models such as the basis pursuit \cite{BasPurs,CompSens}, LASSO \cite{lasso, Bridge, ElaNet}, robust principal component analysis (RPCA) \cite{RPCA} and sparse coding \cite{SpCoding}.
Another important source for nonconvex modeling can be attributed to decomposition problems, e.g.\ tensor decomposition problems \cite{tensorD,Tucker,TTD},
low-rank and/or nonnegative matrix completion or decomposition \cite{lowR_MC,EM,nmf}.
Yet, another main source for nonconvex modeling is associated with
the Riemannian manifold constraints, such as sphere, product of spheres, the Stiefel manifold, the Grassmann manifold, and the low-rank elliptope are often encountered; see  
\cite{Opt_Manif:Absil-etal-2009,GeoStif,wenDouble,nemirv,dicR}.


There has been a recent intensive research interest in studying optimization over a Riemannian manifold:
\be
\min_{x\in\cM} f(x), \nonumber
\ee
where $f$ is smooth; see \cite{CG_NT,RM_GP,RM_NT,RM_tru,RM_tru1,RM_glo} and the references therein. Note that viewed {\it within}\/ manifold itself, the problem is essentially {\it unconstrained}.
Alongside deterministic algorithms, the stochastic gradient descent method (SGD) and the stochastic variance reduced gradient method (SVRG) have also been extended to optimization over Riemannian manifold; see e.g.  \cite{RM_sto1,RM_sto2,RM_sto3,RM_sto4,Jiang-SVRG-RM-2017}. 
Compared to all these approaches, our proposed methods allow a nonsmooth objective, a constraint $x_i\in X_i$, as well as the coupling affine constraints.
A key feature deviating from the traditional Riemannian optimization is that we take advantage of the global solutions for decoupled proximal mappings instead of relying on a retraction mapping, although if retraction mapping is available then it can be incorporated as well.

Alternating Direction Method of Multipliers (ADMM) has attracted much research attention in the past few years. Convergence and iteration complexity results have been thoroughly studied in the convex setting, and recently results have been extended to various nonconvex settings as well; see
\cite{NcvxADMM:Hong2016,NcvxADMM:HongLuoRa2016,NcvxADMM:Norate:LiPong2015,NcvxADMM:Norate:WangCaoXu2015,
NcvxADMM:Norate:WangYinZeng2015,NcvxADMM:Zhang-etal-2016,NcvxADMM:Norate:Nothm:YangPongChen2017}.
Among these results, \cite{NcvxADMM:Norate:LiPong2015,NcvxADMM:Norate:Nothm:YangPongChen2017,NcvxADMM:Norate:WangCaoXu2015,NcvxADMM:Norate:WangYinZeng2015} show the convergence to a stationary point without any iteration complexity guarantee. A closely related paper is \cite{NcvxADMM:ZhuZhang2017}, where the authors consider a multi-block nonconvex nonsmooth optimization problem on the Stiefel manifold with coupling linear constraints. An approximate augmented Lagrangian method is proposed to solve the problem and convergence to the KKT point is analyzed, but no iteration complexity result is given. Another related paper is \cite{NcvxADMM:Manifold-2015}, where the authors solve various manifold optimization problems with affine constraints by a two-block ADMM algorithm, without convergence assurance though. The current investigation is inspired by our previous work \cite{NcvxADMM:Zhang-etal-2016}, which requires the convexity of the constraint sets. In the current paper, we drop this restriction and extend the result to stochastic setting and allow Riemannian manifold constraints.
Speaking of nonconvex optimization, recent progress can be found under the name {\it nonsmooth and nonconvex composite optimization}; see~\cite{GhLan1,GhLan2,GhLan3,sto1}. However, in that case, the nonsmooth part of the objective and the constraint set are assumed to be convex, while these can be dropped in our approach as we noted earlier. 

Finally, we remark that for large-scale optimization such as tensor decomposition \cite{tensorD,Tucker,TTD}, black box tensor approximation problems \cite{TTcross,Tuckerbbox} and the worst-case input models estimation problems \cite{HenryLam1,HenryLam2}, the costs for function or gradient evaluation are prohibitively expensive. Our stochastic approach considerably alleviates the  computational burden. 


\subsection{Our contributions}

The contributions of this paper can be summarized as follows:
\begin{enumerate}
	\item [(i)] We define the $\epsilon$-stationary solution for problem \eqref{prob:main} with Riemmanian manifold constraints.
	\item [(ii)] We propose a nonconvex proximal gradient-based ADMM algorithm and its linearized variant, and analyze their iteration complexity to reach an $\epsilon$-stationary solution.
	\item [(iii)] We propose a stochastic variant of the nonconvex linearized proximal gradient-based ADMM with mini-batches, and establish its iteration complexity in the sense of expectation. 
	\item [(iv)] We propose a feasible curvilinear line-search variant of the nonconvex proximal gradient-based ADMM algorithm, where the exact minimization subroutine is replaced by a line-search procedure using a retraction operator. The iteration complexity of the method is established.

	\item [(v)] We present a number of extensions to the basic method, including relaxing the convexity of nonsmooth component of the objective, and relaxing
the condition on the last block matrix $A_N$. We also extend our analysis from Gauss-Seidel updating to Jacobi updating to enable parallel computing.
\end{enumerate}

\subsection{Organization of the paper}

The rest of the paper is organized as follows.
In Section \ref{sec:PrPr}, we review some basics of  Riemannian manifold. In the same section we derive the necessary optimality condition for a stationary point and the corresponding $\epsilon$-stationary solution for our optimization problem over Riemannian manifold. In Section \ref{sec:algo}, we propose a nonconvex proximal gradient-based ADMM and its three
variants with iteration complexity bounds. In Section \ref{sec:Extension}, we present extensions of our basic model. In Section \ref{sec:application}, we present the implementations of our approach to nonnegative sparse tensor decomposition, the maximum bisection problem, and sparse MPCA. Finally, in Section \ref{sec:Num_rst} we present the results of numerical experiments.
For the ease of presentation, the proofs of technical lemmas are delegated to the appendix.

\section{ Optimality over Manifolds }\label{sec:PrPr}

In this section,
we shall introduce the basics of optimization over manifolds.
The discussion is intended as background information for our purpose; thorough treatments on the topic can be found in, e.g.\
\cite{Smooth_Manif:Lee-John-2008,Opt_Manif:Absil-etal-2009}.
We then extend the first-order optimality condition for constrained optimization on manifold established in \cite{RieOpt:Yang-etal-2012} to our constrained model \eqref{prob:main}. Based on the optimality condition, we introduce the notion of $\epsilon$-stationary solution, and $\epsilon$-stationary solution in expectation (for the stochastic setting) respectively.

Suppose $\cM$ is a differentiable manifold, then for any $x\in\cM$, there exists a \emph{chart} $(U,\psi)$ in which $U$ is an open set with $x\in U\subset\cM$ and $\psi$ is a homeomorphism between $U$ and an open set $\psi(U)$ in Euclidean space. This coordinate transform enables us to locally treat a Riemannian manifold as a Euclidean space. Denote the tangent space $\cM$ at point $x\in\cM$ by $\cT_x\cM$, then $\cM$ is a Riemannian manifold if it is equipped with a metric on the tangent space $\cT_x\cM$ which is continuous in $x$.
\begin{definition}[Tangent Space]\label{Tspace}
Consider a Riemannian manifold $\cM$ embedded in a Euclidean space. For any $x\in\cM$, the tangent space $\cT_x\cM$ at $x$ is a linear subspace consists of the derivatives 
of all smooth curves on $\cM$ passing $x$; that is
	\be
	\cT_x\cM = \left\{\gamma'(0): \gamma(0) = x, \gamma([-\delta,\delta])\subset\cM, \mbox{ for some } \delta>0, \gamma\mbox{ is smooth}\right\}.
	\ee
	The Riemannian metric, i.e., the inner product between $u,v\in\cT_x\cM$, is defined to be $\langle u,v \rangle_x := \langle u,v \rangle$, where the latter is the Euclidean inner product.
\end{definition}

Define the set of all functions differentiable at point $x$ to be $\mathcal{F}_x$. An alternative but more general way of defining tangent space is by viewing a tangent vector $v\in\cT_x\cM$ as an operator mapping $f\in\mathcal{F}_x$ to $v[f]\in\RR$ which satisfies the following property: For any given $f\in\mathcal{F}_x$, there exists a smooth curve $\gamma$ on $\cM$  with $\gamma(0) = x$ and
$v[f] = \frac{d(f(\gamma(t)))}{dt} \bigg{|}_{t = 0}$.
For manifolds embedded in Euclidean spaces, we can obtain  Definition \ref{Tspace} by defining $v = \gamma'(0)$ and $v[f] = \langle \gamma'(0),\nabla f(x)\rangle$.

For example, when $\cM$ is a sphere, $\cT_x\cM$ is the tangent plane at $x$ with a proper translation such that the origin is included. When $\cM = \RR^n$, then $\cT_x\cM = \RR^n = \cM$.

\begin{definition} [Riemannian Gradient]
	For $f\in\mathcal{F}_x$, the Riemannian gradient $\grad f(x)$ is a tangent vector in $\cT_x\cM$ satisfying
	$v[f] = \langle v,\grad f(x)\rangle_x \mbox{ for any } v\in\cT_x\cM.$
	
\end{definition}
If $\cM$ is an embedded submanifold of a Euclidean space, we have
$$\grad f(x) = \Proj_{\cT_x\cM}(\nabla f(x)),$$
where $\Proj_{\cT_x\cM}$ is the Euclidean projection operator onto the subspace $\cT_x\cM$, which is a nonexpansive linear transformation.

\begin{definition} [Differential]
	Let $F: \cM\rightarrow\cN$ be a smooth mapping between two Riemannian manifolds $\cM$ and $\cN$. The differential (or push-forward) of $F$ at $x$ is a mapping $\bD F(x):\cT_x\cM\rightarrow\cT_{F(x)}\cN$  defined by
	$$(\bD F(x)[v])[f] = v[f\circ F], \mbox{ for all $v\in\cT_x\cM$, and $\forall f\in\mathcal{F}_{F(x)}$}.$$
\end{definition}

Suppose $\cM$ is an $m$-dimensional embedded Riemannian submanifold of $\RR^n, m\leq n$, and let $(U,\psi)$ be a chart at point $x\in\cM$, then $\psi$ is a smooth mapping from $U\subset\cM$ to $\psi(U)\subset\cN = \RR^m$. Under a proper set of basis  $\{\ba_i\}_{i=1}^m$ of $\cT_x\cM$ and suppose $v = \sum_{i=1}^mv_i\ba_i$, then
$$\hat{v} :=\bD \psi(x) [v] =  (v_1,...,v_m).$$
Clearly, this establishes a bijection between the tangent space $\cT_x\cM$ and the tangent space of $\cT_{\psi(x)}\psi(U) = \RR^m$. Following the notation in \cite{RieOpt:Yang-etal-2012}, we use $\hat{o}$ to denote the Euclidean counterpart of an object $o$ in $\cM$; e.g.,
$$\hat{f} = f\circ\psi^{-1},~~~~ \hat{v} = \bD\psi(x)[v],~~~~ \hat{x} = \psi(x).$$
Finally, if we define the Gram matrix $G_{x}(i,j) = \langle \ba_i,\ba_j\rangle_x$, which is also known as the Riemannian metric, then $\langle u,v\rangle_x = \hat{u}^\T G_{x}\hat{v}.$

Next, we shall present a few optimization concepts generalized to the manifold case. Let $C$ be a subset in $\RR^n$ and $x\in C$, the tangent cone $T_C(x)$ and the normal cone $N_C(x)$ of $C$ at $x$ are defined in accordance with that in \cite{Nocedal1999}. Suppose $S$ is a closed subset on the Riemannian manifold $\cM$, $(U,\psi)$ is a chart at point $x\in S$, then by using coordinate transform (see also \cite{RieOpt:Yang-etal-2012, Motreanu1982Quasi}), the Riemannian tangent cone can be defined as
\be
\label{Tcone}
\cT_S(x) := [\bD\psi(x)]^{-1}[T_{\psi(S\cap U)}(\psi(x))].
\ee
Consequently, the Riemannian normal cone can be defined as
\be
\label{Ncone}
\cN_S(x) := \{u\in\cT_x\cM:\langle u,v\rangle_x\leq 0, \forall v\in\cT_S(x)\}.
\ee
By a rather standard argument (see \cite{RieOpt:Yang-etal-2012}), the following proposition can be shown:
\begin{proposition}
	\label{NCone_E2R}
	$\cN_S(x) = [\bD\psi(x)]^{-1}[G_{x}^{-1}N_{\psi(U\cap S)}(\psi(x))].$
\end{proposition}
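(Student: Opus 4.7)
The plan is to translate both sides into coordinates via the chart $\psi$, use the Gram-matrix representation of the Riemannian inner product to convert the condition $\langle u,v\rangle_x \le 0$ into a Euclidean inner-product inequality, and then read off the result by duality.

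First I would set up the coordinate picture. Fix a basis $\{\ba_i\}_{i=1}^m$ of $\cT_x\cM$ and identify any tangent vector $w\in\cT_x\cM$ with its coordinate representation $\hat w = \bD\psi(x)[w]\in\R^m$. The map $\bD\psi(x):\cT_x\cM\to\R^m$ is a linear bijection, and by the identity $\langle u,v\rangle_x = \hat u^\T G_x \hat v$, the Gram matrix $G_x$ is symmetric positive definite (since the $\ba_i$'s are a basis and the metric is an inner product), hence invertible.

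Next I would run the chain of equivalences. Take $u\in\cT_x\cM$ arbitrary. By the definition \eqref{Tcone} of the Riemannian tangent cone and the bijectivity of $\bD\psi(x)$, we have the equivalence
\begin{equation*}
\{\,\hat v : v\in\cT_S(x)\,\} = T_{\psi(S\cap U)}(\psi(x)).
\end{equation*}
Thus $u\in\cN_S(x)$ means $\langle u,v\rangle_x\le 0$ for all $v\in\cT_S(x)$, which by the Gram matrix identity is exactly
\begin{equation*}
\hat u^\T G_x \hat v = (G_x\hat u)^\T \hat v \le 0,\quad \forall\, \hat v\in T_{\psi(S\cap U)}(\psi(x)).
\end{equation*}
By the standard Euclidean definition of the normal cone, this is equivalent to $G_x\hat u\in N_{\psi(U\cap S)}(\psi(x))$, or $\hat u\in G_x^{-1}N_{\psi(U\cap S)}(\psi(x))$. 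Undoing the push-forward via $u = [\bD\psi(x)]^{-1}\hat u$ gives $u\in [\bD\psi(x)]^{-1}[G_x^{-1} N_{\psi(U\cap S)}(\psi(x))]$, and every step is reversible, yielding both inclusions.

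I do not foresee a real obstacle: the two non-trivial points are that $G_x$ is invertible (true because it is the Gram matrix of a basis under an inner product) and that $\bD\psi(x)$ is a bijection of tangent spaces (standard for a chart, explicitly noted in the excerpt). The rest is a matter of being careful with the symmetry of $G_x$ so that $\hat u^\T G_x \hat v$ can be re-read as an ordinary Euclidean pairing of $G_x\hat u$ against $\hat v$, which is what allows the Euclidean normal-cone characterization to close the argument.
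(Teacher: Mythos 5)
Your argument is correct, and it is essentially the proof the paper has in mind: the paper does not write out a proof but defers to ``a rather standard argument'' from the cited reference, and that standard argument is exactly your chain --- push everything through the chart via the bijection $\bD\psi(x)$, use \eqref{Tcone} to identify the image of $\cT_S(x)$ with $T_{\psi(S\cap U)}(\psi(x))$, rewrite $\langle u,v\rangle_x=\hat u^\T G_x\hat v=(G_x\hat u)^\T\hat v$ using the symmetry and invertibility of the Gram matrix, and invoke the Euclidean polar-cone definition of $N_{\psi(U\cap S)}(\psi(x))$. Every step is reversible, so both inclusions follow and no gap remains.
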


A function $f$ is said to be locally Lipschitz on $\cM$ if for any $x\in\cM$, there exists some $L>0$ such that in a neighborhood of $x$, $f$ is $L$-Lipschitz in the sense of Riemannian distance. When $\cM$ is a compact manifold, a global $L$ exists. When $\cM$ is an embedded submanifold of $\RR^n$ and $f$ is a locally Lipschitz on $\RR^n,$ let $f|_{\cM}$ be  the function $f$ restricted to $\cM$, then $f|_{\cM}$ is also locally Lipschitz on $\cM$.

\begin{definition}[The Clarke subdifferential on Riemannian manifold \cite{RieOpt:Yang-etal-2012,HP2011}]
	For a locally Lipschitz continuous function $f$ on $\cM$, the \emph{Riemannian generalized directional derivative} of $f$ at $x\in\cM$ in direction $v\in\cT_x\cM$ is defined as
	\be
	\label{Direc_Dir}
	f^{\circ}(x;v) = \limsup_{y\rightarrow x,t\downarrow0}\frac{f\circ\psi^{-1}(\psi(y)+t\bD\psi(y)[v])-f\circ\psi^{-1}}{t}.
	\ee
	Then the Clarke subdifferential is defined as
	\be
	\label{Clarke_Sub}
	\partial f(x) = \{\xi\in\cT_x\cM:\langle\xi,v\rangle\leq f^{\circ}(x;v), \forall v\in\cT_x\cM\}.
	\ee
\end{definition}

There are several remarks for the notion of Riemannian Clarke subdifferentials. If $\cM = \RR^n$ and $\psi = id$, then the above notion reduces to the original Clarke subdifferential \cite{Clarke1983}. In this case, suppose $f$ is differentiable and $r$ is locally Lipschitz, then we have
\be
\label{Clarke:f+r}
\partial(f+r)(x) = \nabla f(x)+\partial r(x),
\ee
where $\partial r(x)$ is the Clarke subdifferential. Furthermore, if we have additional manifold constraints and $r$ is convex, from \cite{RieOpt:Yang-etal-2012} we have
\be
\label{proj_sub}
\partial (f+r)|_{\cM}(x) = \Proj_{\cT_x\cM}(\nabla f(x)+\partial r(x)).
\ee
The convexity of $r$ is crucial in this property. If the nonsmooth part $r_i(x_i)$ in our problem is also nonconvex, then we will have to use additional variables and consensus constraints to decouple $r_i$, the manifold constraint and smooth component $f$, which will be discussed in Section \ref{sec:Extension}. More importantly, we have the following result (see \cite{RieOpt:Yang-etal-2012}):
\begin{proposition}
	\label{SubDif_E2R}
	Suppose $f$ is locally Lipschitz continuous in a neighborhood of $x$, and $(U,\psi)$ is a chart at $x$. It holds that
	$$\partial f(x) = [\bD\psi(x)]^{-1}[G_{x}^{-1}\partial (f\circ\psi^{-1})(\psi(x))].$$
\end{proposition}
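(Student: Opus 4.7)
The plan is to pull the entire subdifferential condition through the chart and reduce the claim to the Euclidean Clarke calculus applied to $\hat{f}=f\circ\psi^{-1}$ on $\psi(U)\subset\RR^m$. The two ingredients I will need are (a) a directional-derivative identity $f^\circ(x;v)=\hat{f}^\circ(\hat{x};\hat{v})$, where $\hat{x}=\psi(x)$ and $\hat{v}=\bD\psi(x)[v]$, and (b) the Gram-matrix identity $\langle u,v\rangle_x=\hat{u}^\T G_x\hat{v}$ recorded just before the proposition.

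First I would establish (a). After the change of variable $\hat{y}=\psi(y)$ in \eqref{Direc_Dir} (valid since $\psi$ is a homeomorphism, so $y\to x$ iff $\hat{y}\to\hat{x}$), the Riemannian generalized derivative takes the form
$$f^\circ(x;v) \;=\; \limsup_{\hat{y}\to\hat{x},\,t\downarrow 0}\frac{\hat{f}\bigl(\hat{y}+t\,\bD\psi(\psi^{-1}(\hat{y}))[v]\bigr)-\hat{f}(\hat{y})}{t}.$$
The subtlety is that the perturbation direction $\bD\psi(y)[v]$ depends on $y$, whereas the Euclidean Clarke derivative $\hat{f}^\circ(\hat{x};\hat{v})$ uses the fixed direction $\hat{v}$. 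Since $\hat{f}$ is locally Lipschitz (as the composition of the locally Lipschitz $f$ with the smooth $\psi^{-1}$) and $y\mapsto\bD\psi(y)[v]$ is continuous at $x$, some constant $L$ yields
$$\bigl|\hat{f}\bigl(\hat{y}+t\,\bD\psi(y)[v]\bigr)-\hat{f}\bigl(\hat{y}+t\,\hat{v}\bigr)\bigr|\;\leq\; L\,t\,\|\bD\psi(y)[v]-\hat{v}\|$$
uniformly in a neighborhood of $(x,0^+)$. Dividing by $t$ and letting $(y,t)\to(x,0^+)$ kills the right-hand side, so the two choices of direction produce the same $\limsup$ and (a) follows.

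With (a) in hand, the remainder is linear algebra. By \eqref{Clarke_Sub}, $\xi\in\partial f(x)$ means $\xi\in\cT_x\cM$ and $\langle\xi,v\rangle_x\leq f^\circ(x;v)$ for every $v\in\cT_x\cM$. Writing $\hat{\xi}=\bD\psi(x)[\xi]$ and $\hat{v}=\bD\psi(x)[v]$, invoking (b) on the left-hand side and (a) on the right-hand side, the condition is equivalent to
$$(G_x\hat{\xi})^\T\hat{v}\;\leq\;\hat{f}^\circ(\hat{x};\hat{v})\qquad\forall\,\hat{v}\in\RR^m,$$
which by the Euclidean Clarke subdifferential definition is precisely $G_x\hat{\xi}\in\partial\hat{f}(\hat{x})$. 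Solving for $\hat{\xi}$ gives $\hat{\xi}\in G_x^{-1}\partial\hat{f}(\hat{x})$, and applying $[\bD\psi(x)]^{-1}$ to both sides delivers the claimed formula. The main obstacle is the direction-replacement in step (a): one must confirm that letting the perturbation direction vary continuously with the base point does not change the $\limsup$, and this is precisely what the local Lipschitz property of $\hat{f}$ buys, since it absorbs the error into a term that is $O(t\cdot o(1))$ and hence vanishes after the $1/t$ normalization. Everything afterwards is bookkeeping between the chart and the Euclidean model space.
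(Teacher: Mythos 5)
Your proof is correct. Note that the paper does not actually prove this proposition --- it is quoted directly from \cite{RieOpt:Yang-etal-2012} --- so there is no in-paper argument to compare against; what you have written is a self-contained version of the standard chart-transfer argument underlying the cited result. The two steps are exactly the right ones: (a) the identity $f^{\circ}(x;v)=\hat{f}^{\circ}(\hat{x};\bD\psi(x)[v])$, where the only real issue is that the perturbation direction $\bD\psi(y)[v]$ in \eqref{Direc_Dir} moves with the base point, and your Lipschitz estimate $\bigl|\hat{f}(\hat{y}+t\bD\psi(y)[v])-\hat{f}(\hat{y}+t\hat{v})\bigr|\leq Lt\|\bD\psi(y)[v]-\hat{v}\|$ correctly shows this perturbation contributes nothing to the $\limsup$ after dividing by $t$; and (b) the purely algebraic transfer of the inequality $\langle\xi,v\rangle_x\leq f^{\circ}(x;v)$ through the bijection $\bD\psi(x)$ and the Gram matrix $G_x$ (using its symmetry to pass from $\hat{\xi}^\T G_x\hat{v}$ to $(G_x\hat{\xi})^\T\hat{v}$). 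The one point worth stating explicitly, which you use implicitly, is that $\bD\psi(y)[v]$ for $y\neq x$ presupposes some continuous identification of $v\in\cT_x\cM$ with tangent vectors at nearby points (e.g., via a local extension to a vector field or, for embedded submanifolds, the ambient inclusion); granting the convention already built into \eqref{Direc_Dir}, your continuity assumption on $y\mapsto\bD\psi(y)[v]$ is exactly what is needed and the argument is complete.
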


\subsection{Optimality condition and the  $\epsilon$-stationary solution}
Consider the following optimization problem over manifold:
\bea
\label{prob:Rm1}
& \minimize & f(x)  \\
& \st & x\in S\subset \cM.   \nonumber
\eea
Suppose that $x^*$ is a local minimum, and that $(U,\psi)$ is a chart at $x^*$. Then, $\hat{x}^*:=\psi(x^*)$ must also be a local minimum for the problem
\bea
\label{prob:Ec1}
& \minimize & \hat{f}(\hat{x})  \\
& \st & \hat{x}\in \psi(S\cap U).   \nonumber
\eea
Therefore, problem \eqref{prob:Rm1} is transformed into a standard nonlinear programming problem \eqref{prob:Ec1} in Euclidean space. We will then find the optimality condition via \eqref{prob:Ec1} and map it back to that of \eqref{prob:Rm1} by using the differential operator.

Assume that both $\hat{f}$ and $f$ are locally Lipschitz. The optimality of $\hat{x}^*$ yields (cf.~\cite{Clarke1983})
$$
0\in\partial \hat{f}(\hat{x}^*)+N_{\psi(U\cap S)}(\hat{x}^*).
$$
Apply the bijection $[\bD\psi(x)]^{-1}\circ G_{x}^{-1}$ on both sides, and by Propositions \ref{SubDif_E2R} and \ref{NCone_E2R}, the first-order optimality condition for problem \eqref{prob:Rm1} follows as a result:
\be
\label{opt_cond_1}
0\in\partial f(x^*)+\cN_S(x^*).
\ee
If $f$ is differentiable, then \eqref{opt_cond_1} reduces to
$$-\grad f(x^*)\in\cN_S(x^*).$$
To specify the set $S$ in problem \eqref{prob:main},
let us consider an equality constrained problem
\bea
\label{prob:eq_cons}
& \minimize & f(x)\nonumber\\
& \st & c_i(x) = 0,i = 1,...,m, \\
& & x\in \cM \cap X. \nonumber
\eea
Note that in the case of \eqref{prob:main}, the above constraints $c_i(x)=0$, $i=1,2,...,m$, represent the linear equality constraints.
Define $\Omega := \{x\in\cM: c_i(x) = 0,\, i=1,...,m\}$, and $S := \Omega\cap X$. By assuming the so-called Linear Independent Constraint Qualification (LICQ) condition on the Jacobian of $c(x)$ at $x^*$, Corollary 4.1 in \cite{RieOpt:Yang-etal-2012} implies
\be
\label{Ncone_1}
\cN_{\Omega}(x^*) = \left. \left\{\sum_{i=1}^m \lambda_i\, \grad c_i(x^*) \, \right| \, \lambda\in \RR^m \right\} = -(\cT_{\Omega}(x^*))^\star,
\ee
where ${\cal K}^\star$ indicates the dual of cone ${\cal K}$. 
Therefore, \eqref{opt_cond_1} implies
$$
\partial f(x^*) \cap \left( -\cN_S(x^*)  \right) \not= \emptyset .
$$
We have
\bea
-(\cN_{\Omega}(x^*)+\cN_{X}(x^*)) & = & (\cT_{\Omega}(x^*))^\star+(\cT_{X}(x^*))^\star \nonumber\\
& \subseteqq &
\mathrm{cl}\, ((\cT_{\Omega}(x^*))^\star+(\cT_{X}(x^*))^\star) \nonumber\\
& = & (\cT_{\Omega}(x^*)\cap\cT_{X}(x^*))^\star\nonumber\\
& \subseteqq&
(\cT_{\Omega\cap X}(x^*))^\star. \nonumber
\eea
  The optimality condition is established as:
\begin{proposition}
	\label{opt_cond_eq}
Suppose that $x^*\in \cM \cap X$ and $c_i(x^*)= 0, i = 1,...,m$. If
	$$
    \partial f(x^*) \cap \left( - \cN_{\Omega}(x^*) - \cN_X (x^*)  \right)  \not=\emptyset,
    $$
then $x^*$ is a stationary solution for problem \eqref{prob:eq_cons}.
\end{proposition}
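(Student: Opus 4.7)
The plan is to verify that the hypothesis implies the manifold first-order condition \eqref{opt_cond_1} applied to the full feasible set $S := \Omega\cap X$, i.e.\ $\partial f(x^*)\cap(-\cN_S(x^*))\neq\emptyset$, which is exactly what ``stationary solution for \eqref{prob:eq_cons}'' means by the derivation leading to \eqref{opt_cond_1}.

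First I would observe that, by the very definition \eqref{Ncone} of the Riemannian normal cone as the polar of the Riemannian tangent cone with respect to $\langle\cdot,\cdot\rangle_{x^*}$, one has $-\cN_S(x^*) = (\cT_S(x^*))^\star = (\cT_{\Omega\cap X}(x^*))^\star$, where $\star$ denotes the dual cone inside the finite-dimensional inner-product space $\cT_{x^*}\cM$. Analogously, for the convex set $X$, the polarity gives $-\cN_X(x^*) = (\cT_X(x^*))^\star$, while for $\Omega$ under LICQ we have $-\cN_\Omega(x^*) = (\cT_\Omega(x^*))^\star$ directly from \eqref{Ncone_1}.

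Second, I would chain together the four inclusions already displayed in the paragraph immediately before the proposition statement: the elementary inclusion $\cT_{\Omega\cap X}(x^*)\subseteq \cT_\Omega(x^*)\cap\cT_X(x^*)$ (any smooth curve in $\Omega\cap X$ is a smooth curve in each factor) reverses under polarity, and the classical identity $\mathrm{cl}(\cK_1^\star+\cK_2^\star)=(\cK_1\cap\cK_2)^\star$ applied in $\cT_{x^*}\cM$ yields
\[
-\bigl(\cN_\Omega(x^*)+\cN_X(x^*)\bigr) \;=\; (\cT_\Omega(x^*))^\star+(\cT_X(x^*))^\star \;\subseteq\; \mathrm{cl}\bigl((\cT_\Omega(x^*))^\star+(\cT_X(x^*))^\star\bigr) \;=\; (\cT_\Omega(x^*)\cap\cT_X(x^*))^\star \;\subseteq\; -\cN_S(x^*).
\]
Third, any $\xi\in\partial f(x^*)\cap(-\cN_\Omega(x^*)-\cN_X(x^*))$ guaranteed by the hypothesis therefore also lies in $-\cN_S(x^*)$, giving $0\in\partial f(x^*)+\cN_S(x^*)$, which is the required \eqref{opt_cond_1} and hence the stationarity of $x^*$.

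The main obstacle is essentially bookkeeping: one has to confirm that the polar-cone calculus carried out above, which is usually stated in $\R^n$, is valid on the abstract tangent space $\cT_{x^*}\cM$. This is immediate, since $\cT_{x^*}\cM$ is a finite-dimensional real vector space equipped with the inner product $\langle\cdot,\cdot\rangle_{x^*}$, so all standard convex-analysis tools (polars, sums of cones, closure identities) apply verbatim. A minor subtlety is the appearance of the closure in the middle step, but under LICQ the cone $\cT_\Omega(x^*)$ is actually the \emph{linear subspace} $\{v\in\cT_{x^*}\cM : \langle\grad c_i(x^*),v\rangle_{x^*}=0,\ i=1,\dots,m\}$, and $\cT_X(x^*)$ is a closed convex cone, so the Minkowski sum of their polars is already closed and the closure is superfluous for the one-directional inclusion needed here.
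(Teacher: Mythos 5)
Your proposal is correct and follows essentially the same route as the paper: the paper's own justification is precisely the displayed chain $-(\cN_{\Omega}(x^*)+\cN_{X}(x^*)) = (\cT_{\Omega}(x^*))^\star+(\cT_{X}(x^*))^\star \subseteq (\cT_{\Omega\cap X}(x^*))^\star = -\cN_S(x^*)$ combined with the first-order condition \eqref{opt_cond_1}, which is exactly your argument. Your added observations (that the polar-cone calculus is valid on the finite-dimensional inner-product space $\cT_{x^*}\cM$, and that only the one-directional inclusion is needed so the closure step is harmless) are correct refinements of details the paper leaves implicit.
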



By specifying the optimality condition in Proposition \ref{opt_cond_eq} to \eqref{prob:main}, we have:
\begin{theorem} Consider problem \eqref{prob:main} where $f$ is smooth with Lipschitz gradient and $r_i$'s are convex and locally Lipschitz continuous. If there exists a Lagrange multiplier $\lambda^*$ such that
	\be
	\label{opt_ADMM}
	\begin{cases}  \nabla_Nf(x^*)-A_N^\T \lambda^* =0,\\
		\sum_{i = 1}^{N}A_ix_i^*-b=0,\\
		\Proj_{\cT_{x_i^*}\mathcal{M}_i}\left(\nabla_i f(x^*)-A_i^\T \lambda^*+\partial r_i(x_i^*)\right)+\mathcal{N}_{X_i\cap \cM_i}(x_i^*)\ni0, i = 1,...,N-1,
	\end{cases}
	\ee
then $x^*$ is a stationary solution for problem \eqref{prob:main}.
\end{theorem}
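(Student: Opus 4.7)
\emph{Proof plan.} The plan is to recognize \eqref{prob:main} as an instance of the equality-constrained manifold problem \eqref{prob:eq_cons} and specialize Proposition~\ref{opt_cond_eq} blockwise. Set $\cM := \cM_1\times\cdots\times\cM_{N-1}\times\RR^{n_N}$, $X := X_1\times\cdots\times X_{N-1}\times\RR^{n_N}$, and $c_j(x) := (\sum_{i=1}^N A_ix_i-b)_j$ for $j=1,\ldots,m$. Because the $x_N$-block is Euclidean, the tangent space at $x^*$ splits as $\cT_{x^*}\cM=\cT_{x_1^*}\cM_1\times\cdots\times\cT_{x_{N-1}^*}\cM_{N-1}\times\RR^{n_N}$, and likewise the Clarke subdifferential on the product manifold decouples across blocks.

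First I would check LICQ for the system $\{c_j\}_{j=1}^m$. The Euclidean gradient of $c_j$ is $(A_1^\T e_j,\ldots,A_{N-1}^\T e_j,A_N^\T e_j)$, and its Riemannian gradient on $\cM$ is obtained by projecting each block onto the corresponding tangent space; since $A_N=I$, the last block is already $e_j$. Linear independence across $j=1,\ldots,m$ is therefore inherited from the identity block in $x_N$, so LICQ holds at every feasible $x^*$, and \eqref{Ncone_1} applies, yielding
\begin{equation*}
\cN_{\Omega}(x^*)=\left\{\bigl(-\Proj_{\cT_{x_1^*}\cM_1}(A_1^\T\lambda),\ldots,-\Proj_{\cT_{x_{N-1}^*}\cM_{N-1}}(A_{N-1}^\T\lambda),-A_N^\T\lambda\bigr):\lambda\in\RR^m\right\},
\end{equation*}
where I have absorbed a sign into $\lambda$ to match the convention in \eqref{opt_ADMM}.

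Next I would invoke Proposition~\ref{opt_cond_eq}: it suffices to exhibit an element of $\partial(f+\sum_ir_i)|_{\cM}(x^*)$ inside $-\cN_{\Omega}(x^*)-\cN_X(x^*)$. By \eqref{Clarke:f+r} and \eqref{proj_sub}, the Clarke subdifferential on $\cM$ equals $\Proj_{\cT_{x^*}\cM}(\nabla f(x^*)+\partial(\sum_ir_i)(x^*))$, which decouples as $\Proj_{\cT_{x_i^*}\cM_i}(\nabla_if(x^*)+\partial r_i(x_i^*))$ for $i<N$ and as $\nabla_Nf(x^*)$ for $i=N$ (there is no $r_N$, no $X_N$, and the tangent space is all of $\RR^{n_N}$, so $\cN_{X_N\cap\cM_N}(x_N^*)=\{0\}$). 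Writing the inclusion blockwise, the $N$-th block gives $\nabla_Nf(x^*)-A_N^\T\lambda^*=0$, the $i$-th block ($i<N$) gives exactly
\begin{equation*}
0\in\Proj_{\cT_{x_i^*}\cM_i}\bigl(\nabla_if(x^*)-A_i^\T\lambda^*+\partial r_i(x_i^*)\bigr)+\cN_{X_i\cap\cM_i}(x_i^*),
\end{equation*}
and the equality constraint gives $\sum_iA_ix_i^*=b$. These are precisely the three lines of \eqref{opt_ADMM}, completing the argument.

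The proof is essentially bookkeeping rather than analysis; the only nontrivial point is verifying LICQ on the product manifold, and here the assumption $A_N=I$ together with the absence of a manifold/convex constraint on $x_N$ makes this automatic. A minor care point is the order of projection and Clarke subdifferential when $r_i$ is nonsmooth: one must ensure that $\partial r_i$ is taken in the Euclidean sense inside the projection, which is legitimate by \eqref{proj_sub} because $r_i$ is assumed convex (hence locally Lipschitz with well-behaved Clarke subdifferential).
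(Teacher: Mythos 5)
Your proposal is correct and follows exactly the route the paper intends: the paper states this theorem without proof as a direct specialization of Proposition~\ref{opt_cond_eq} to \eqref{prob:main}, and your writeup simply supplies the omitted bookkeeping (LICQ via the identity block $A_N=I$ on the unconstrained $x_N$-coordinate, and the blockwise decoupling of the product tangent space, Clarke subdifferential, and normal cones). Nothing in your argument deviates from or adds to the paper's approach beyond making these routine verifications explicit.
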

Hence, an $\epsilon$-stationary solution of problem \eqref{prob:main} can be naturally defined as:
\begin{definition} [$\epsilon$-stationary solution]
	\label{def:epsolu} Consider problem \eqref{prob:main} where $f$ is smooth with Lipschitz gradient and $r_i$ are convex and locally Lipschitz continuous. Solution $x^*$ is said to be an $\epsilon$-stationary solution if there exists a multiplier $\lambda^*$ such that
	\be
	\begin{cases} \|\nabla_Nf(x^*)-A_N^\T \lambda^*\|\leq\epsilon,\\
		\|\sum_{i = 1}^{N}A_ix_i^*-b\|\leq\epsilon,\nonumber\\
		\dist\left(\Proj_{\cT_{x_i^*}\mathcal{M}_i}\left(-\nabla_i f(x^*)+A_i^\T \lambda^*-\partial r_i(x_i^*)\right),\mathcal{N}_{X_i\cap\mathcal{M}_i}(x_i^*)\right) 
\leq\epsilon, i = 1,...,N-1.
	\end{cases}
	\ee
\end{definition}

In the case that $x^*$ is a vector generated by some randomized algorithm, the following adaptation is appropriate.

\begin{definition} [$\epsilon$-stationary solution in expectation]\label{def:epsolu-exp} Suppose that $x^*$ and $\lambda^*$ are generated by some randomized process. Then, we call $x^*$ and $\lambda^*$ to be $\epsilon$-stationary solution for problem \eqref{prob:main} in expectation if the following holds
	\be
	\begin{cases} \E\left[\|\nabla_Nf(x^*)-A_N^\T \lambda^*\|\right]\leq\epsilon,\\ \smallskip
		\E\left[\|\sum_{i = 1}^{N}A_ix_i^*-b\|\right]\leq\epsilon,\nonumber \\ \smallskip
		\E \left[\dist \left(\Proj_{\cT_{x_i^*}\mathcal{M}_i}\left( -\nabla_if(x^*)+A_i^\T \lambda^*-\partial r_i(x_i^*)\right),\mathcal{N}_{X_i\cap\mathcal{M}_i}(x_i^*)\right)\right]\leq\epsilon, i = 1,...,N-1.
	\end{cases}
	\ee
\end{definition}

\section{Proximal Gradient ADMM and Its Variants}
\label{sec:algo}

In \cite{NcvxADMM:Zhang-etal-2016}, Jiang, Lin, Ma and Zhang proposed a proximal gradient-based variant of ADMM for nonconvex and nonsmooth optimization model with convex constraints. 
In this paper, we extend the analysis to include nonconvex Riemannian manifold constraints, motivated by the vast array of potential applications.
Moreover, we propose to linearize the nonconvex function $f$, which significantly broadens the applicability and enables us to utilize the stochastic gradient-based method to reduce computational costs for large-scale problems. As it turns out, the convergence result for this variant remains intact.

Concerning problem \eqref{prob:main}, we first make some assumptions on $f$ and $r_i$'s.
\begin{assumption}
	\label{assumption-1-Lbounds}
	$f$ and $r_i, i = 1,...,N-1$, are all bounded from bellow in the feasible region. We denote the lower bounds by $r_i^* = \min_{x_i\in\cM_i\cap X_i}r_i(x_i), i = 1,...,N-1$ and
	$$f^* = \min_{x_i\in\cM_i\cap X_i, i=1,...,N-1, x_N\in\RR^{n_N}}f(x_1,\cdots,x_N).$$
\end{assumption}

\begin{assumption}
	\label{assumption-2-Lips}
	$f$ is a smooth function with $L$-Lipschitz continuous gradient; i.e.
    \be\label{eq:assumption-2-lips}
    \|\nabla f(x_1,\ldots,x_N)-\nabla f(\hat{x}_1,\ldots,\hat{x}_N)\|_2 \leq L\|(x_1-\hat{x}_1,\ldots,x_N-\hat{x}_N)\|_2, \,\,\, \forall x, \hat{x}.
    \ee
\end{assumption}

\begin{assumption}
	\label{assumption-3-subP_globalsolu}
The proximal mappings required at Step 1 of Algorithms \ref{alg:PADMM}, \ref{alg:PADMM-L} and \ref{alg:PADMM-S} are all computable. (As we will see in Section \ref{sec:application}, this assumption holds true for many practical applications).
\end{assumption}

\subsection{Nonconvex proximal gradient-based ADMM}

The augmented Lagrangian function for problem \eqref{prob:main} is
\be
\label{Lagrangian}
\cL_{\beta}(x_1,x_2,\cdots,x_N,\lambda) = f(x_1,\cdots,x_N)+\sum_{i=1}^{N-1}r_i(x_i)-\bigg\langle \sum_{i=1}^{N}A_ix_i-b,\lambda \bigg\rangle + \frac{\beta}{2}\left\|\sum_{i=1}^{N}A_ix_i-b\right\|^2,
\ee
where $\lambda$ is the Lagrange multiplier, $\beta>0$ is a penalty parameter. Our proximal gradient-based ADMM for solving \eqref{prob:main} is described in Algorithm \ref{alg:PADMM}.

\begin{algorithm2e}[H]
	\caption{Nonconvex Proximal Gradient-Based ADMM on Riemannian Manifold}
	\label{alg:PADMM}
	Given $(x_1^0,x_2^0,\cdots,x_N^0)\in(\mathcal{M}_1\cap X_1)\times(\mathcal{M}_2\cap X_2)\times\cdots \times(\mathcal{M}_{N-1}\cap X_{N-1})\times\RR^{n_N}$, $\lambda^0\in \RR^m$, $\beta>0$, $\gamma>0$, $H_i\succ 0, i=1,\ldots,N-1$.\\
	\For{$k = 0,1,...$ }{
		$[\mbox{Step 1}]$ For $i = 1,2,...,N-1$, and positive semi-definite matrix $H_i$, compute
$x_i^{k+1} := \argmin_{x_i\in \mathcal{M}_i\cap X_i }\mathcal{L}_{\beta}(x_1^{k+1},\cdots,x_{i-1}^{k+1},x_i,x_{i+1}^{k},\cdots,x_N^k,\lambda^k)+\frac{1}{2}\|x_i-x_i^k\|^2_{H_i}$; \\
		$[\mbox{Step 2}]$ $x_{N}^{k+1} := x_N^k-\gamma\nabla_N\mathcal{L}_{\beta}(x_1^{k+1},\cdots,x_{N-1}^{k+1},x_N^k,\lambda^k)$; \\
		$[\mbox{Step 3}]$ $\lambda^{k+1} := \lambda^k-\beta(\sum_{i = 1}^{N}A_ix_i^{k+1}-b)$.
	}
\end{algorithm2e}

Before we give the main convergence result of Algorithm \ref{alg:PADMM}, we need the following lemmas. Lemmas \ref{lm:PADMM-lemma1} and \ref{lm:PADMM-lemma3} are from \cite{NcvxADMM:Zhang-etal-2016}; 
and the proof of Lemma \ref{lm:PADMM-lemma2} is in the appendix.

\begin{lemma} (Lemma 3.9 in \cite{NcvxADMM:Zhang-etal-2016})
	\label{lm:PADMM-lemma1} Suppose that the sequence $\{x_1^k,...,\xk_N,\lambda^k\}$ is generated by Algorithm \ref{alg:PADMM}. Then,
	\bea
	\|\lambda^{k+1}-\lambda^k\|^2 & \leq & 3\left(\beta-\frac{1}{\gamma}\right)^2\|\xk_N-\xke_N\|^2+3\left[\left(\beta-\frac{1}{\gamma}\right)^2+L^2\right]\|\xkm_N-\xk_N\|^2 \nonumber \\
	& & +3L^2\sum_{i=1}^{N-1}\|\xk_i-\xke_i\|^2.\label{lemma-bound}
	\eea
\end{lemma}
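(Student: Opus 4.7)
The plan is to exploit the dual update (Step 3) together with the optimality/gradient‑step condition for $x_N$ (Step 2) to derive a closed‑form expression for $\lambda^{k+1}$ in terms of $\nabla_N f$ at an intermediate point and the primal increment $x_N^{k+1}-x_N^k$. Once we have that, the bound falls out from Lipschitz continuity of $\nabla f$ together with the elementary inequality $\|a+b+c\|^2\le 3(\|a\|^2+\|b\|^2+\|c\|^2)$.

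Concretely, let $\tilde x^{k}=(x_1^{k+1},\dots,x_{N-1}^{k+1},x_N^{k})$. First I would write out Step 2 explicitly using $A_N=I$:
\[
x_N^{k+1}-x_N^{k} = -\gamma\Bigl(\nabla_N f(\tilde x^{k}) - \lambda^{k} + \beta\bigl(\textstyle\sum_{i=1}^{N-1}A_ix_i^{k+1}+x_N^{k}-b\bigr)\Bigr).
\]
Next I would use Step 3, namely $\lambda^{k+1}=\lambda^{k}-\beta(\sum_{i=1}^{N}A_ix_i^{k+1}-b)$, to rewrite the $\beta$‑residual term as $\lambda^{k}-\lambda^{k+1}+\beta(x_N^{k}-x_N^{k+1})$. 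Substituting this identity into the displayed equation and collecting terms yields
\[
\lambda^{k+1}=\nabla_N f(\tilde x^{k})+\Bigl(\tfrac{1}{\gamma}-\beta\Bigr)(x_N^{k+1}-x_N^{k}),
\]
and the analogous identity holds one step back with $\tilde x^{k-1}=(x_1^{k},\dots,x_{N-1}^{k},x_N^{k-1})$.

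Taking the difference gives
\[
\lambda^{k+1}-\lambda^{k}=\bigl[\nabla_N f(\tilde x^{k})-\nabla_N f(\tilde x^{k-1})\bigr]+\Bigl(\tfrac{1}{\gamma}-\beta\Bigr)\bigl[(x_N^{k+1}-x_N^{k})-(x_N^{k}-x_N^{k-1})\bigr].
\]
Applying $\|a+b+c\|^2\le 3(\|a\|^2+\|b\|^2+\|c\|^2)$ splits the right-hand side into three squared terms. Two of these are already $3(\beta-1/\gamma)^2$ multiples of $\|x_N^{k+1}-x_N^{k}\|^2$ and $\|x_N^{k}-x_N^{k-1}\|^2$. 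For the remaining gradient‑difference term, I would use Assumption \ref{assumption-2-Lips}:
\[
\|\nabla_N f(\tilde x^{k})-\nabla_N f(\tilde x^{k-1})\|^2\le L^2\|\tilde x^{k}-\tilde x^{k-1}\|^2 = L^2\Bigl(\sum_{i=1}^{N-1}\|x_i^{k+1}-x_i^{k}\|^2+\|x_N^{k}-x_N^{k-1}\|^2\Bigr).
\]
Combining the three contributions produces exactly the claimed inequality \eqref{lemma-bound}.

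The derivation is essentially bookkeeping; the one place that requires care is making sure that Step 2 is interpreted as an ordinary Euclidean gradient step (since $x_N\in\mathbb R^{n_N}$ is unconstrained) so that the dual multiplier is recovered algebraically rather than only as a subgradient inclusion. No manifold optimality condition is needed here, because the manipulation only involves the $x_N$-block and $\lambda$; the manifold constraints enter the rest of the convergence analysis but not this particular lemma. That observation is the main conceptual point; after that, the splitting by $\|\cdot\|^2\le 3(\cdot)$ and the Lipschitz bound are immediate.
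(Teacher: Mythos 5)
Your derivation is correct and is essentially the same argument the paper relies on: the closed-form expression you obtain for $\lambda^{k+1}$ is exactly the identity recorded in \eqref{To-hard-to-give-a-name-TAT}, and differencing it across two iterations, applying $\|a+b+c\|^2\le 3(\|a\|^2+\|b\|^2+\|c\|^2)$, and invoking the Lipschitz bound on $\nabla_N f$ reproduces \eqref{lemma-bound} with the correct coefficients. The paper itself defers the proof to Lemma 3.9 of \cite{NcvxADMM:Zhang-etal-2016}, but your reconstruction matches that argument, including the observation that only the unconstrained $x_N$-block and the dual update are involved.
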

Since Steps 2 and 3 in Algorithm \ref{alg:PADMM} are the same as those in \cite{NcvxADMM:Zhang-etal-2016}, this lemma remains valid here. Specially, Step 2 and Step 3 directly result in
\be
\label{To-hard-to-give-a-name-TAT}
\lambda^{k+1} = \left(\beta-\frac{1}{\gamma}\right)(\xk_N-\xke_N)+\nabla_Nf(\xke_1,\ldots,\xke_{N-1},\xk_N).
\ee
We define a potential function
\be
\label{Decrease_func}
\Psi_G(x_1,\cdots,x_N,\lambda,\bar{x}) = \cL_\beta(x_1,\cdots,x_N,\lambda) + \frac{3}{\beta}\left[\left(\beta-\frac{1}{\gamma}\right)^2+L^2\right]\|\bar{x}-x_N\|^2.
\ee

With Lemma \ref{lm:PADMM-lemma1}, the following monotonicity property can be established.

\begin{lemma}\label{lm:PADMM-lemma2}
	Suppose the sequence $\{(x^k_1,\cdots, x^k_N,\lambda_k)\}$ is generated by Algorithm \ref{alg:PADMM}. Assume that
	\be
	\label{beta}
	\beta > 
	\left(\frac{6+18\sqrt{3}}{13}\right)L \approx 2.860L \mbox{ and } H_i\succ\frac{6L^2}{\beta}I, i = 1,...,N-1.
	\ee
	Then $\Psi_G(\xke_1,\cdots,\xke_N,\lambda^{k+1},\xk_N)$ is monotonically decreasing over $k$ if $\gamma$ lies in the following interval:
	\be
	\label{gamma}
	\gamma\in
	\left(\frac{12}{13\beta+\sqrt{13\beta^2-12\beta L-72L^2}},\frac{12}{13\beta-\sqrt{13\beta^2-12\beta L-72L^2}}\right).
	\ee
\end{lemma}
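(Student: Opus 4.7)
The plan is to track $\Delta_k := \Psi_G(\xke_1,\cdots,\xke_N,\lambda^{k+1},\xk_N) - \Psi_G(\xk_1,\cdots,\xk_N,\lambda^{k},\xkm_N)$ by decomposing it according to the three steps of Algorithm \ref{alg:PADMM}, then absorbing the ascent produced by the dual update using Lemma \ref{lm:PADMM-lemma1} together with the telescoping quadratic term hidden inside $\Psi_G$. The ingredients are: (i) the optimality of each $x_i$-subproblem, which---since $\xk_i\in\cM_i\cap X_i$ is feasible by induction---yields $\cL_\beta(\cdots,\xke_i,\cdots)-\cL_\beta(\cdots,\xk_i,\cdots)\le -\tfrac12\|\xke_i-\xk_i\|^2_{H_i}$; (ii) the descent lemma applied to the $x_N$-step, noting that $\nabla_N\cL_\beta(\cdot,\lambda^k)$ is $(L+\beta)$-Lipschitz (as $A_N=I$), which gives $\cL_\beta(\cdots,\xke_N,\lambda^k)-\cL_\beta(\cdots,\xk_N,\lambda^k)\le -\left(\tfrac{1}{\gamma}-\tfrac{L+\beta}{2}\right)\|\xke_N-\xk_N\|^2$; and (iii) the dual update, which from $\lambda^{k+1}-\lambda^k=-\beta(\sum_i A_i\xke_i-b)$ produces the ascent $\cL_\beta(\xke,\lambda^{k+1})-\cL_\beta(\xke,\lambda^k)=\tfrac{1}{\beta}\|\lambda^{k+1}-\lambda^k\|^2$.

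Next I would invoke Lemma \ref{lm:PADMM-lemma1} to bound the dual-ascent term. The $\|\xk_i-\xke_i\|^2$ contributions thereby produced combine with the $H_i$-descent to form the aggregate $-\tfrac12\|\xke_i-\xk_i\|^2_{H_i-(6L^2/\beta)I}$, which is negative provided $H_i\succ (6L^2/\beta)I$. The $\|\xkm_N-\xk_N\|^2$ contribution from Lemma \ref{lm:PADMM-lemma1} is exactly cancelled by the negative part of the telescoping quantity $\tfrac{3}{\beta}[(\beta-\tfrac{1}{\gamma})^2+L^2]\bigl(\|\xk_N-\xke_N\|^2-\|\xkm_N-\xk_N\|^2\bigr)$ contributed by $\Psi_G$. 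After this bookkeeping, the coefficient of $\|\xke_N-\xk_N\|^2$ in $\Delta_k$ simplifies to $-\left(\tfrac{1}{\gamma}-\tfrac{L+\beta}{2}\right)+\tfrac{6}{\beta}(\beta-\tfrac{1}{\gamma})^2+\tfrac{3L^2}{\beta}$, and monotonicity $\Delta_k\le 0$ reduces to requiring this quantity to be strictly negative.

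To extract the explicit bounds \eqref{beta} and \eqref{gamma}, I would substitute $u:=\beta-1/\gamma$; the required inequality becomes $6u^2+\beta u+\tfrac{\beta L}{2}-\tfrac{\beta^2}{2}+3L^2<0$, a convex quadratic in $u$ with discriminant $13\beta^2-12\beta L-72L^2$. Positivity of the discriminant, solved for $\beta$, yields exactly $\beta>(6+18\sqrt{3})L/13$; the feasible interval for $u$ is then $\left(\tfrac{-\beta-\sqrt{13\beta^2-12\beta L-72L^2}}{12},\tfrac{-\beta+\sqrt{13\beta^2-12\beta L-72L^2}}{12}\right)$, and inverting $\gamma=1/(\beta-u)$ produces the interval stated in \eqref{gamma}. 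The main obstacle, I expect, is the careful tracking of the quadratic error terms so that every $\|\xkm_N-\xk_N\|^2$ and every $\|\xk_i-\xke_i\|^2$ term is paired correctly for cancellation; once that algebraic bookkeeping is performed, the reduction to the stated threshold is a routine discriminant computation.
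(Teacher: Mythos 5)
Your proposal is correct and follows essentially the same route as the paper: the same three-step decomposition of the change in $\cL_\beta$ (subproblem optimality, descent lemma for the $x_N$ gradient step, dual ascent of $\tfrac{1}{\beta}\|\lambda^{k+1}-\lambda^k\|^2$), the same use of Lemma \ref{lm:PADMM-lemma1} to absorb the dual ascent and of the telescoping quadratic in $\Psi_G$ to cancel the $\|\xkm_N-\xk_N\|^2$ term, arriving at the identical coefficient $\tfrac{\beta+L}{2}-\tfrac{1}{\gamma}+\tfrac{6}{\beta}(\beta-\tfrac{1}{\gamma})^2+\tfrac{3L^2}{\beta}$. The only cosmetic difference is that you analyze the resulting quadratic in $u=\beta-1/\gamma$ while the paper uses $z=1/\gamma$; both yield the discriminant $13\beta^2-12\beta L-72L^2$ and the stated intervals.
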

More specifically, we have
\bea
\label{lm_:PADMM-lemma2:5}
& & \Psi_G(\xke_1,\cdots,\xke_{N-1},\xke_N,\lambda^{k+1},\xk_N) - \Psi_G(\xk_1,\cdots,\xk_{N-1},\xk_N,\lambda^{k},\xkm_N) \nonumber\\
&\leq& \left[\frac{\beta+L}{2}-\frac{1}{\gamma}+\frac{6}{\beta}\left(\beta-\frac{1}{\gamma}\right)^2+\frac{3L^2}{\beta}\right]\|\xk_N-\xke_N\|^2 \\
& & - \sum_{i = 1}^{N-1}\|\xk_i-\xke_i\|^2_{\frac{1}{2}H_i-\frac{3L^2}{\beta}I} < 0.\nonumber
\eea


\begin{lemma}
	\label{lm:PADMM-lemma3}
	(Lemma 3.11 in \cite{NcvxADMM:Zhang-etal-2016}) Suppose that the sequence $\{\xk_1,\cdots,\xk_N,\lambda^k\}$ is generated by Algorithm \ref{alg:PADMM}. It holds that
	\be
	\Psi_G(\xke_1,\cdots,\xke_{N-1},\xke_N,\lambda^{k+1},\xk_N) \geq \sum_{i=1}^{N-1}r_i^*+f^*,
	\ee
	where $r_i^*, i = 1,...,N-1$ and $f^*$ are defined in Assumption \ref{assumption-1-Lbounds}.
\end{lemma}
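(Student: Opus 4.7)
The overall plan is to lower bound $\cL_\beta(x^{k+1},\lambda^{k+1})$ by $f^*+\sum_{i<N}r_i^*$ modulo a nonpositive multiple of $\|x_N^k-x_N^{k+1}\|^2$, and then to rely on the nonnegative quadratic correction in the definition of $\Psi_G$ to absorb that residual. Setting $r^{k+1}:=\sum_i A_ix_i^{k+1}-b$, completing the square in the dual term of the augmented Lagrangian gives
\begin{equation*}
\cL_\beta(x^{k+1},\lambda^{k+1}) = f(x^{k+1})+\sum_{i<N}r_i(x_i^{k+1}) + \frac{\beta}{2}\Big\|r^{k+1}-\tfrac{\lambda^{k+1}}{\beta}\Big\|^2 - \frac{1}{2\beta}\|\lambda^{k+1}\|^2 \geq f(x^{k+1})+\sum_{i<N}r_i(x_i^{k+1}) - \frac{1}{2\beta}\|\lambda^{k+1}\|^2,
\end{equation*}
so the task reduces to controlling $\|\lambda^{k+1}\|^2$ in a form dominated by $f(x^{k+1})-f^*$ and $\|x_N^k-x_N^{k+1}\|^2$.

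The expression \eqref{To-hard-to-give-a-name-TAT} for $\lambda^{k+1}$, combined with the weighted inequality $\|a+b\|^2\leq(1+\alpha)\|a\|^2+(1+1/\alpha)\|b\|^2$ for any $\alpha>0$, yields
\begin{equation*}
\|\lambda^{k+1}\|^2 \leq (1+\alpha)\|\nabla_N f(x_1^{k+1},\ldots,x_{N-1}^{k+1},x_N^k)\|^2 + (1+1/\alpha)\Big(\beta-\tfrac{1}{\gamma}\Big)^2\|x_N^k-x_N^{k+1}\|^2.
\end{equation*}
The crucial step, and the only one that truly exploits $A_N=I$ together with $x_N\in\RR^{n_N}$ being unconstrained, is the observation that, for any fixed $x_1,\ldots,x_{N-1}$ in their feasible sets, the section $f(x_1,\ldots,x_{N-1},\cdot)$ is $L$-smooth on all of $\RR^{n_N}$ and is bounded below by $f^*$. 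Minimizing its quadratic upper bound in $x_N$ thus yields the standard gradient bound $\|\nabla_N f(x)\|^2\leq 2L(f(x)-f^*)$ at any feasible $x$. Applying it at $(x_1^{k+1},\ldots,x_{N-1}^{k+1},x_N^k)$ and then using the descent lemma together with Young's inequality to trade $f(x_1^{k+1},\ldots,x_{N-1}^{k+1},x_N^k)$ for $f(x^{k+1})$ produces an inequality of the form
\begin{equation*}
\cL_\beta(x^{k+1},\lambda^{k+1}) \geq \kappa\, f(x^{k+1})+(1-\kappa)f^*+\sum_{i<N}r_i(x_i^{k+1}) - M\|x_N^k-x_N^{k+1}\|^2
\end{equation*}
for some $\kappa\in[0,1]$ and $M\geq 0$ depending on $\beta$, $\gamma$, $L$, $\alpha$, and the Young's parameter.

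Since $f(x^{k+1})\geq f^*$ and $r_i(x_i^{k+1})\geq r_i^*$ by Assumption \ref{assumption-1-Lbounds}, the right-hand side is at least $f^*+\sum_{i<N}r_i^*-M\|x_N^k-x_N^{k+1}\|^2$, and the negative quadratic is then cancelled by the nonnegative term $\tfrac{3}{\beta}[(\beta-1/\gamma)^2+L^2]\|x_N^k-x_N^{k+1}\|^2$ built into $\Psi_G$. The main obstacle is the constant bookkeeping: one must verify that, under the tight lower bound $\beta>\tfrac{6+18\sqrt{3}}{13}L$ of \eqref{beta} and the admissible range \eqref{gamma} for $\gamma$, the free parameters $\alpha$ and the Young's exponent can be chosen so that $\kappa\in[0,1]$ and $M\leq\tfrac{3}{\beta}[(\beta-1/\gamma)^2+L^2]$ hold simultaneously. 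Once this numerical check is carried out, the bound $\Psi_G(x^{k+1},\lambda^{k+1},x_N^k)\geq f^*+\sum_{i<N}r_i^*$ follows.
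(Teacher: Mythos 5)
Your argument is correct, but it takes a genuinely different route from the one the paper relies on (the paper cites Lemma 3.11 of \cite{NcvxADMM:Zhang-etal-2016}; its method is visible in the appendix proof of the stochastic analogue, Lemma \ref{lm:PADMM-S-lemma3}). The paper keeps the residual $r^{k+1}=\sum_i A_ix_i^{k+1}-b$ explicit, substitutes \eqref{To-hard-to-give-a-name-TAT} into the linear dual term, and uses the descent lemma in the $x_N$-direction to replace $f(x^{k+1})-\langle r^{k+1},\nabla_N f(x^{k+1})\rangle+\frac{L}{2}\|r^{k+1}\|^2$ by $f$ evaluated at the feasibility-restored point $(x_1^{k+1},\ldots,x_{N-1}^{k+1},\,b-\sum_{i<N}A_ix_i^{k+1})$, which is $\geq f^*$; the remaining cross terms are absorbed into $\frac{\beta}{2}\|r^{k+1}\|^2$ by Young's inequality. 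You instead eliminate $r^{k+1}$ entirely by completing the square, reducing everything to a bound on $\|\lambda^{k+1}\|^2$, and then invoke the gradient-domination inequality $\|\nabla_N f\|^2\leq 2L(f-f^*)$ for the $L$-smooth, lower-bounded section $f(x_1^{k+1},\ldots,x_{N-1}^{k+1},\cdot)$. Both arguments exploit exactly the same structural facts ($A_N=I$, $x_N$ unconstrained, $L$-smoothness), just through different lemmas; your version trades the feasibility-restoration trick for a convex-combination bound $\kappa f(x^{k+1})+(1-\kappa)f^*$. The constant bookkeeping you flag as the remaining obstacle does close: taking $\alpha=1/5$ in the weighted triangle inequality and $\eta=4L$ as the Young parameter in the descent-lemma step gives $M=\frac{1}{2\beta}\left[6\left(\beta-\frac{1}{\gamma}\right)^2+\frac{6}{5}L(4L+L)\right]=\frac{3}{\beta}\left[\left(\beta-\frac{1}{\gamma}\right)^2+L^2\right]$, exactly the coefficient built into $\Psi_G$, and $\kappa=1-\frac{(1+\alpha)L}{\beta}\left(1+\frac{L}{\eta}\right)=1-\frac{3L}{2\beta}\in(0,1)$ under \eqref{beta}. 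One stylistic cost of your route is that these two free parameters must be tuned jointly against the fixed constant in $\Psi_G$, whereas the paper's route only needs $\frac{\beta}{2}-2\cdot\frac{\beta}{12}-\frac{L}{2}>0$; a benefit is that your completed-square step makes the role of $\|\lambda^{k+1}\|$ transparent and would generalize to settings where the feasibility-restored point is awkward to describe.
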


Denote $\sigma_{\min}(M)$ as the smallest singular value of a matrix $M$. Now we are ready to present the main convergence result of Algorithm \ref{alg:PADMM}.
\begin{theorem}\label{thm:PADMM}
	Suppose that the sequence $\{\xk_1,...,\xk_N,\lambda^k\}$ is generated by Algorithm \ref{alg:PADMM}, and the parameters $\beta$ and $\gamma$ satisfy \eqref{beta} and \eqref{gamma} respectively. Define $\kappa_1 := \frac{3}{\beta^2}\left[\left(\beta-\frac{1}{\gamma}\right)^2+L^2\right]$, $\kappa_2 := \left(|\beta-\frac{1}{\gamma}|+L\right)^2$, $\kappa_3 := \left(L+\beta\sqrt{N}\max_{1\leq i\leq  N}  \|A_i\|_2^2 +\max_{1\leq i\leq N-1}\|H_i\|_2\right)^2$ and \\
	$\tau:= \min\left\{ -\left[\frac{\beta+L}{2}-\frac{1}{\gamma}+\frac{6}{\beta}\left(\beta-\frac{1}{\gamma}\right)^2+\frac{3L^2}{\beta}\right], \min_{i = 1,...,N-1}\left[-\left(\frac{3L^2}{\beta}-\frac{\sigma_{\min}(H_i)}{2}\right) \right] \right\} $. Assuming  $H_i\succ\frac{6L^2}{\beta}I$ and letting
\be\label{def:K-retraction}
K := \left\lceil\frac{2\max\{\kappa_1,\kappa_2,\kappa_3\}}{\tau\epsilon^2}
\left(\Psi_G(x_1^1,...,x_N^1,\lambda^1,x_N^0)-\sum_{i=1}^{N-1}r_i^*-f^*\right)\right\rceil,
\ee
and $k^* := \argmin_{2\leq k\leq K+1}\sum_{i=1}^N(\|\xk_i-\xke_i\|^2+\|\xkm_i-\xk_i\|^2),$ it follows that $(x_1^{k^*+1},\cdots,x_N^{k^*+1},\lambda^{k^*+1})$ is an $\epsilon$-stationary solution of \eqref{prob:main} defined in Definition \ref{def:epsolu}.
\end{theorem}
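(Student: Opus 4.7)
The plan is to combine three ingredients already prepared in the text: the sufficient-decrease property of the potential function $\Psi_G$ (Lemma~\ref{lm:PADMM-lemma2}), its lower bound (Lemma~\ref{lm:PADMM-lemma3}), and the closed-form expression of the multiplier given in \eqref{To-hard-to-give-a-name-TAT}, together with the first-order optimality condition for each block-$i$ subproblem on $\cM_i\cap X_i$ obtained via Proposition~\ref{opt_cond_eq}. The overall architecture is standard for ADMM-type iteration-complexity proofs: the descent--lower-bound pair gives, after telescoping, a summable bound on the squared iterate differences; then each of the three residuals appearing in Definition~\ref{def:epsolu} is controlled by these differences with coefficients $\sqrt{\kappa_1}$, $\sqrt{\kappa_2}$, and $\sqrt{\kappa_3}$ respectively.

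Concretely, I would first sum \eqref{lm_:PADMM-lemma2:5} over $k=1,\ldots,K+1$; combined with Lemma~\ref{lm:PADMM-lemma3} this telescopes to
\[
\tau\sum_{k=1}^{K+1}\Bigl(\|\xk_N-\xke_N\|^2+\sum_{i=1}^{N-1}\|\xk_i-\xke_i\|^2\Bigr) \le \Psi_G(x_1^1,\ldots,x_N^1,\lambda^1,x_N^0)-\sum_{i=1}^{N-1}r_i^*-f^*.
\]
An averaging argument over the $K$ admissible indices then produces the argmin $k^*\in\{2,\ldots,K+1\}$ satisfying $\sum_{i=1}^N(\|\xkm_i-\xk_i\|^2+\|\xk_i-\xke_i\|^2)\le \tfrac{2}{\tau K}(\Psi_G^1-\sum r_i^*-f^*)$, which by the choice of $K$ in \eqref{def:K-retraction} is at most $\epsilon^2/\max\{\kappa_1,\kappa_2,\kappa_3\}$.

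Next I would translate the algorithmic updates into three residual estimates at $k = k^*$. (i) Identity \eqref{To-hard-to-give-a-name-TAT} together with Assumption~\ref{assumption-2-Lips} yields $\|\nabla_N f(x^{k+1})-A_N^\T\lambda^{k+1}\|\le (L+|\beta-1/\gamma|)\|\xk_N-\xke_N\|=\sqrt{\kappa_2}\,\|\xk_N-\xke_N\|$. (ii) Step~3 gives $\sum_i A_i x_i^{k+1}-b=(\lambda^k-\lambda^{k+1})/\beta$, so Lemma~\ref{lm:PADMM-lemma1} and the definition of $\kappa_1$ bound this residual by $\sqrt{\kappa_1}$ times the square root of the same iterate-difference quantity. (iii) For each $i=1,\ldots,N-1$, applying \eqref{proj_sub} and Proposition~\ref{opt_cond_eq} to the Step~1 subproblem gives the inclusion
\[
0\in \Proj_{\cT_{x_i^{k+1}}\cM_i}\Bigl[\nabla_i f(w_i^{k+1})-A_i^\T\lambda^k+\beta A_i^\T\Bigl(\sum_{j\le i}A_j x_j^{k+1}+\sum_{j>i}A_j x_j^k-b\Bigr)+H_i(x_i^{k+1}-x_i^k)+\partial r_i(x_i^{k+1})\Bigr]+\cN_{X_i\cap\cM_i}(x_i^{k+1}),
\]
where $w_i^{k+1}=(x_1^{k+1},\ldots,x_i^{k+1},x_{i+1}^k,\ldots,x_N^k)$. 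Eliminating $\lambda^k$ by means of Step~3, invoking the Lipschitz bound on $\nabla f$ to replace $w_i^{k+1}$ by $x^{k+1}$, bounding $\|H_i(x_i^{k+1}-x_i^k)\|\le \|H_i\|_2\|\xk_i-\xke_i\|$, and exploiting nonexpansiveness of $\Proj_{\cT_{x_i^{k+1}}\cM_i}$ shows that the distance from $0$ to the set in the third line of Definition~\ref{def:epsolu} is at most $\sqrt{\kappa_3}$ times the same square root. The three bounds together establish the claim.

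The main obstacle lies in step~(iii): one must carefully track the discrepancy between the partial residual $\sum_{j\le i}A_j x_j^{k+1}+\sum_{j>i}A_j x_j^k-b$ appearing inside the $i$-th subproblem and the full residual $\sum_j A_j x_j^{k+1}-b$ defining $\lambda^{k+1}$, which produces cross terms $\beta A_i^\T A_j(x_j^{k+1}-x_j^k)$ for $j>i$ whose aggregate norm, via Cauchy--Schwarz, brings in the factor $\beta\sqrt{N}\max_i\|A_i\|_2^2$ inside $\kappa_3$. A secondary subtlety is that the identity $\partial(f+r_i)|_{\cM_i}(x_i)=\Proj_{\cT_{x_i}\cM_i}(\nabla f(x_i)+\partial r_i(x_i))$ from \eqref{proj_sub} relies on the convexity of $r_i$; this is precisely why that hypothesis is imposed in this theorem and only relaxed through reformulation in Section~\ref{sec:Extension}.
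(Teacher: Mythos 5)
Your proposal is correct and follows essentially the same route as the paper's proof: telescoping the decrease estimate \eqref{lm_:PADMM-lemma2:5} against the lower bound of Lemma \ref{lm:PADMM-lemma3} to bound $\min_{2\le k\le K+1}\theta_k$ by $\tfrac{2}{\tau K}(\Psi_G^1-\sum_i r_i^*-f^*)$, and then controlling the three residuals of Definition \ref{def:epsolu} via \eqref{To-hard-to-give-a-name-TAT}, Step 3 with Lemma \ref{lm:PADMM-lemma1}, and the Step-1 optimality condition with $\lambda^k$ eliminated through Step 3, yielding the constants $\kappa_2$, $\kappa_1$, $\kappa_3$ exactly as in \eqref{ConsVio}, \eqref{lambda}, and \eqref{otherblock}. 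The two subtleties you flag (the cross terms $\beta A_i^\T A_j(x_j^{k+1}-x_j^k)$ for $j>i$ and the reliance of \eqref{proj_sub} on the convexity of $r_i$) are precisely the points the paper's argument turns on.
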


\begin{proof}
For the ease of presentation, we denote
\be
\label{thm2_1}
\theta_k := \sum_{i=1}^N(\|\xk_i-\xke_i\|^2+\|\xkm_i-\xk_i\|^2).
\ee
Summing \eqref{lm_:PADMM-lemma2:5} over $k=1,\ldots,K$ yields
\be
\label{thm2_2}
\Psi_G(x_1^1,\cdots,x_N^1,\lambda^1,x_N^0) - \Psi_G(x_1^{K+1},\cdots,x_N^{K+1},\lambda^{K+1},x_N^K)\geq \tau\sum_{k=1}^{K}\sum_{i=1}^N\|\xk_i-\xke_i\|^2,
\ee
which implies
\bea
& & \min_{2\leq k\leq K+1} \theta_k \nonumber\\
& \leq &
\frac{1}{\tau K}\left[2\Psi_G(x_1^1,\cdots,x_N^1,\lambda^1,x_N^0) - \Psi_G(x_1^{K+1},\ldots,x_N^{K+1},\lambda^{K+1},x_N^K) - \Psi_G(x_1^{K+2},\ldots,x_N^{K+2},\lambda^{K+2},x_N^{K+1})\right] \nonumber \\
& \leq& \frac{2}{\tau K}\left[\Psi_G(x_1^1,\cdots,x_N^1,\lambda^1,x_N^0) - f^* -\sum_{i = 1}^{N-1}r_i^*\right]. \label{thm2_3}
\eea
By \eqref{To-hard-to-give-a-name-TAT} we have
\bea
& &\|\lambda^{k+1} - \nabla_N f(\xke_1,\cdots,\xke_N)\|^2 \nonumber\\
&\leq& \left( \left|\beta-\frac{1}{\gamma}\right| \|\xk_N-\xke_N\|+\|\nabla_Nf(\xke_1,\cdots,\xke_{N-1},\xk_N)-\nabla_Nf(\xke_1,\cdots,\xke_N)\|\right)^2\nonumber\\
&\leq&  \left(\left| \beta-\frac{1}{\gamma} \right|+L\right)^2\|\xk_N-\xke_N\|^2 \nonumber\\
&\leq & \kappa_2 \theta_k. \label{ConsVio}
\eea
From Step 3 of Algorithm \ref{alg:PADMM} and \eqref{lemma-bound}, we have
\bea
& & \left\|\sum_{i=1}^{N-1}A_i\xke_i+\xke_N-b\right\|^2   \nonumber \\
&=&  \frac{1}{\beta^2}\|\lambda^k-\lambda^{k+1}\|^2 \nonumber  \\
& \leq & \frac{3}{\beta^2}\left[\left(\beta-\frac{1}{\gamma}\right)^2+L^2\right]\|x^{k-1}_N-\xk_N\|^2 + \frac{3}{\beta^2}\left(\beta-\frac{1}{\gamma}\right)^2\|\xke_N-\xk_N\|^2
+ \frac{3L^2}{\beta^2}\sum_{i=1}^{N-1}\|\xke_i-\xk_i\|^2 \nonumber \\
&\leq& \kappa_1 \theta_k. \label{lambda}
\eea

By the optimality conditions (e.g., \eqref{opt_cond_1}) for the subproblems in Step 1 of Algorithm \ref{alg:PADMM}, and using \eqref{proj_sub} and Step 3 of Algorithm \ref{alg:PADMM}, we can get
\bea
\Proj_{\mathcal{T}_{x_i^{k+1}}\mathcal{M}_i}\biggl\{\nabla_if(x_1^{k+1},\cdots,x_i^{k+1},x_{i+1}^k,\cdots,x_{N}^k)-A_i^\T \lambda^{k+1}+\beta A_i^\T \left(\sum_{j = i+1}^{N}A_j(x_j^k-x_j^{k+1})\right) \nonumber\\
+H_i(x_i^{k+1}-x_i^k)+g_i(x_i^{k+1})\biggr\}+q_i(x_i^{k+1}) = 0, \label{subPopt}
\eea

for some $g_i(x_i^{k+1})\in\partial r_i(x_i^{k+1})$, $q_i(x_i^{k+1})\in\mathcal{N}_{X_i}(x_i^{k+1})$. Therefore,

\begin{eqnarray*}
& &  \dist\left(\Proj_{\cT_{x_i^{k+1}}\mathcal{M}_i}\biggl\{-\nabla_i f(x^{k+1})+A_i^\T \lambda^{k+1}-\partial r_i(x_i^{k+1})\biggr\},\mathcal{N}_{X_i}(x_i^{k+1})\right) \nonumber \\
&\leq& \biggl\|\Proj_{\cT_{x_i^{k+1}}\mathcal{M}_i}\biggl\{-\nabla_i f(x^{k+1})+A_i^T\lambda^{k+1}-g_i(x_i^{k+1})-q_i(x_i^{k+1})\biggr\}\biggr\| \nonumber\\
& = & \biggl\|\Proj_{\cT_{x_i^{k+1}}\mathcal{M}_i}\biggl\{-\nabla_i f(x^{k+1})+\nabla_if(x_1^{k+1},\cdots, x_i^{k+1},x_{i+1}^k,\cdots,x_{N}^k) \nonumber\\
& &+\beta A_i^\T (\sum_{j = i+1}^{N}A_j(x_j^k-x_j^{k+1}))
+H_i(x_i^{k+1}-x_i^k)\biggr\}\biggr\|  \nonumber\\
\end{eqnarray*}

\begin{eqnarray}
&\leq& \|-\nabla_if(x^{k+1})+\nabla_if(x_1^{k+1},\cdots,x_i^{k+1},x_{i+1}^k,\cdots,x_{N}^k)-H_i(x_i^{k+1}-x_i^k) \nonumber\\
& &+\beta A_i^\T (\sum_{j = i+1}^{N}A_j(x_j^{k+1}-x_j^k)) \| \nonumber\\
&\leq& \|\nabla_if(\xke)-\nabla_if(\xke_1,\cdots,\xke_i,\xk_{i+1},\cdots,\xk_N)\| + \|H_i(x_i^{k+1}-x_i^k)\| \nonumber \\
& & +\|\beta A_i^\T (\sum_{j = i+1}^{N}A_j(x_j^{k+1}-x_j^k)) \| \nonumber \\
& \leq & \left(L+\beta\max_{1\leq j\leq N}\|A_j\|_2^2\sqrt{N}\right)\sqrt{\sum_{j = i+1}^N\|\xke_j-\xk_j\|^2} + \max_{1\leq j\leq N-1}\|H_j\|_2\|\xk_i-\xke_i\|  \nonumber\\
& \leq & \sqrt{\kappa_3\theta_k}. \label{otherblock}
\end{eqnarray}

Combining \eqref{ConsVio}, \eqref{lambda}, \eqref{otherblock} and \eqref{def:K} yields the desired result. \end{proof}

\subsection{Nonconvex linearized proximal gradient-based ADMM}
When modeling nonconvex and nonsmooth optimization with manifold constraints, it is often the case that computing proximal mapping (in the presence of $f$) may be difficult, while optimizing with a quadratic objective is still possible. This leads to a variant of ADMM which linearizes the $f$ function.
In particular, we define the following approximation to the augmented Lagrangian function: 
\bea
\label{LagApprox}
\hat{\cL}_{\beta}^i(x_i;\hat{x}_1,\cdots,\hat{x}_N,\lambda) & := & f(\hat{x}_1,\cdots,\hat{x}_N)+\langle \nabla_i f(\hat{x}_1,\cdots,\hat{x}_N),x_i-\hat{x}_i\rangle + r_i(x_i) \nonumber\\
& & -\bigg\langle \sum_{j=1,j\neq i}^{N}A_j\hat{x}_j+ A_ix_i-b,\lambda\bigg\rangle +\frac{\beta}{2}\bigg\|\sum_{j=1,j\neq i}^{N}A_j\hat{x}_j+ A_ix_i-b\bigg\|^2,
\eea
where $\lambda$ is the Lagrange multiplier and $\beta>0$ is a penalty parameter. It is worth noting that this approximation is defined with respect to a particular block of variable $x_i$. The linearized proximal gradient-based ADMM algorithm is described as in Algorithm \ref{alg:PADMM-L}.

\begin{algorithm2e}[H]
	\caption{Nonconvex Linearized Proximal Gradient-Based ADMM}
	\label{alg:PADMM-L}
	Given $(x_1^0,x_2^0,\cdots,x_N^0)\in(\mathcal{M}_1\cap X_1)\times(\mathcal{M}_2\cap X_2)\times\cdots \times(\mathcal{M}_{N-1}\cap X_{N-1})\times\RR^{n_N}$, $\lambda^0\in\RR^m$, $\beta>0$, $\gamma>0$, $H_i\succ 0, i=1,\ldots,N-1$.\\
	\For{$k = 0,1,... $}{
		$[\mbox{Step 1}]$ For $i = 1,2,...,N-1$ and positive semi-definite matrix $H_i$, compute
$x_i^{k+1} := \argmin_{x_i\in \mathcal{M}_i\cap X_i }\hat{\mathcal{L}}^i_{\beta}(x_i;x_1^{k+1},\cdots,x_{i-1}^{k+1},x_i^k,\cdots,x_N^k,\lambda^k)+\frac{1}{2}\|x_i-x_i^k\|^2_{H_i}$, \\
		$[\mbox{Step 2}]$ $x_{N}^{k+1} := x_N^k-\gamma\nabla_N\mathcal{L}_{\beta}(x_1^{k+1},\cdots,x_{N-1}^{k+1},x_N^k,\lambda^k)$,\\
		$[\mbox{Step 3}]$ $\lambda^{k+1} := \lambda^k-\beta(\sum_{i = 1}^{N}A_ix_i^{k+1}-b)$.
	}
\end{algorithm2e}

Essentially, instead of solving the subproblem involving the exact augmented Lagrangian defined by \eqref{Lagrangian}, we use the linearized approximation defined in \eqref{LagApprox}. It is also noted that the Steps 2 and 3 of Algorithm \ref{alg:PADMM-L} are the same as the ones in Algorithm \ref{alg:PADMM}, and thus Lemmas \ref{lm:PADMM-lemma1} and \ref{lm:PADMM-lemma3} still hold, as they do not depend on Step 1 of the algorithms. As a result, we only need to present the following lemma, which is a counterpart of Lemma \ref{lm:PADMM-lemma2}, and the proof is given in the appendix.


\begin{lemma}\label{lm:PADMM-L-lemma2}
Suppose that the sequence $(\xk_i,\cdots,\xk_N,\lambda^k)$ is generated by Algorithm \ref{alg:PADMM-L}. Let the parameters $\beta$ and $\gamma$ be defined according to \eqref{beta} and \eqref{gamma}, and $\Psi_G(x_1,\cdots,x_N,\lambda,\bar{x})$ be defined according to \eqref{Decrease_func}. If we choose $$H_i\succ\left(\frac{6L^2}{\beta}+L\right)I, ~for~i = 1,...,N-1,$$ then $\Psi_G(\xke_1,\cdots,\xke_N,\lambda^{k+1},\xk_N)$ monotonically decreases. More specifically, we have
\bea
\label{lm_:PADMM-L-lemma2:3}
& & \Psi_G(\xke_1,\cdots,\xke_{N-1},\xke_N,\lambda^{k+1},\xk_N) - \Psi_G(\xk_1,\cdots,\xk_{N-1},\xk_N,\lambda^{k},\xkm_N) \nonumber\\
&\leq& \left[\frac{\beta+L}{2}-\frac{1}{\gamma}+\frac{6}{\beta}\left(\beta-\frac{1}{\gamma}\right)^2+\frac{3L^2}{\beta}\right]\|\xk_N-\xke_N\|^2 \\
& & - \sum_{i = 1}^{N-1}\|\xk_i-\xke_i\|_{\frac{1}{2}H_i-\frac{L}{2}I-\frac{3L^2}{\beta}I}.\nonumber
\eea
Note that the right hand side of \eqref{lm_:PADMM-L-lemma2:3} is negative under the above conditions.
\end{lemma}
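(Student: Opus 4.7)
The plan is to mirror the proof of Lemma \ref{lm:PADMM-lemma2}, but to account for the fact that in Algorithm \ref{alg:PADMM-L} Step 1 only minimizes the linearized surrogate $\hat{\mathcal{L}}_\beta^i$ rather than $\mathcal{L}_\beta$ itself. Because Steps 2 and 3 are identical to those in Algorithm \ref{alg:PADMM}, Lemma \ref{lm:PADMM-lemma1} together with the identity \eqref{To-hard-to-give-a-name-TAT} remain valid, as do the contributions of the $x_N$-gradient step and the dual update to the change of $\mathcal{L}_\beta$. The only genuinely new piece of bookkeeping is the per-block descent estimate coming from Step 1.

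First I would exploit the global optimality of $x_i^{k+1}$ in the prox-subproblem (valid by Assumption \ref{assumption-3-subP_globalsolu}, with $x_i^k \in \mathcal{M}_i\cap X_i$ as a feasible competitor) to obtain
$$
\hat{\mathcal{L}}_\beta^i(x_i^{k+1};x_1^{k+1},\ldots,x_i^k,\ldots,x_N^k,\lambda^k) + \tfrac{1}{2}\|x_i^{k+1}-x_i^k\|_{H_i}^2 \;\leq\; \hat{\mathcal{L}}_\beta^i(x_i^k;\ldots).
$$
Next I would convert this into descent on the genuine $\mathcal{L}_\beta$ by the $L$-Lipschitz descent lemma applied in the $i$-th block direction, which bounds $f(\ldots,x_i^{k+1},\ldots)$ above by $f(\ldots,x_i^k,\ldots)+\langle \nabla_i f(\ldots,x_i^k,\ldots),\,x_i^{k+1}-x_i^k\rangle+\tfrac{L}{2}\|x_i^{k+1}-x_i^k\|^2$. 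Since $\hat{\mathcal{L}}_\beta^i$ and $\mathcal{L}_\beta$ agree on all $r_j$, Lagrangian, and quadratic-penalty terms that do not depend on $x_i$, combining the two inequalities gives
$$
\mathcal{L}_\beta(\ldots,x_i^{k+1},\ldots,\lambda^k) - \mathcal{L}_\beta(\ldots,x_i^k,\ldots,\lambda^k) \;\leq\; -\tfrac{1}{2}\|x_i^{k+1}-x_i^k\|^2_{H_i-LI}.
$$
Summing over $i=1,\ldots,N-1$ yields the primal-sweep contribution to the descent of $\mathcal{L}_\beta$.

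From this point the proof is a line-by-line replay of the proof of Lemma \ref{lm:PADMM-lemma2}, with every occurrence of $\tfrac{1}{2}\|x_i^{k+1}-x_i^k\|_{H_i}^2$ replaced by $\tfrac{1}{2}\|x_i^{k+1}-x_i^k\|^2_{H_i-LI}$. Adding the $x_N$-gradient-step contribution, the $\tfrac{1}{\beta}\|\lambda^{k+1}-\lambda^k\|^2$ upper bound on the dual-update increase, the three-term estimate of Lemma \ref{lm:PADMM-lemma1}, and the telescoping produced by the tail $\tfrac{3}{\beta}[(\beta-1/\gamma)^2+L^2]\|\bar{x}-x_N\|^2$ in $\Psi_G$ reproduces \eqref{lm_:PADMM-L-lemma2:3}. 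The coefficient of $\|x_N^{k+1}-x_N^k\|^2$ is unchanged, so the thresholds \eqref{beta} on $\beta$ and \eqref{gamma} on $\gamma$ transfer verbatim; the block-descent matrix becomes $\tfrac{1}{2}H_i-\tfrac{L}{2}I-\tfrac{3L^2}{\beta}I$, whose positive-definiteness is precisely equivalent to the strengthened hypothesis $H_i\succ(\tfrac{6L^2}{\beta}+L)I$.

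The main obstacle is exactly the conversion from descent on the surrogate to descent on the true augmented Lagrangian: since $\mathcal{M}_i\cap X_i$ is nonconvex, no sharper bound than the Lipschitz-gradient descent inequality is available, so an inevitable linearization error of $\tfrac{L}{2}\|x_i^{k+1}-x_i^k\|^2$ must be absorbed somewhere. The calculation shows that this absorption happens cleanly inside the $H_i$-block term without disturbing the coupling between the $x_N$-update and the dual update that is governed by Lemma \ref{lm:PADMM-lemma1}; hence neither the condition on $\beta$ nor the interval for $\gamma$ needs to be changed, and only the stronger bound on $H_i$ appears.
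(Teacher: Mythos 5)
Your proposal is correct and follows essentially the same route as the paper's proof: global optimality of the linearized prox-subproblem, the descent lemma to absorb the linearization error of $f$ into an extra $\frac{L}{2}\|x_i^{k+1}-x_i^k\|^2$ term (yielding the per-block descent $-\|x_i^{k+1}-x_i^k\|^2_{\frac{1}{2}H_i-\frac{L}{2}I}$), and then a verbatim replay of the argument for Lemma \ref{lm:PADMM-lemma2} since Steps 2 and 3 are unchanged. The paper combines these in exactly this way, arriving at the same modified block matrix $\frac{1}{2}H_i-\frac{L}{2}I-\frac{3L^2}{\beta}I$ and the same unchanged conditions on $\beta$ and $\gamma$.
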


We are now ready to present the main complexity result for Algorithm \ref{alg:PADMM-L}, and the proof is omitted because it is very similar to that of Theorem \ref{thm:PADMM}.

\begin{theorem}\label{thm:PADMM-L}
Suppose the sequence $\{\xk_1,\cdots,\xk_N,\lambda^k\}$ is generated by Algorithm \ref{alg:PADMM-L}. Let the parameters $\beta$ and $\gamma$ satisfy \eqref{beta} and \eqref{gamma} respectively. Define $\kappa_1,\kappa_2,\kappa_3$ same as that in Theorem \ref{thm:PADMM}. Define
$$
\tau:= \min\left\{ -\left[\frac{\beta+L}{2}-\frac{1}{\gamma}+\frac{6}{\beta}\left(\beta-\frac{1}{\gamma}\right)^2+\frac{3L^2}{\beta}\right], \min_{i = 1,...,N-1}\left\{-\left(\frac{3L^2}{\beta}+\frac{L}{2}-\frac{\sigma_{\min}(H_i)}{2}\right) \right\} \right\}.
$$
Assume $H_i\succ\left(\frac{6L^2}{\beta}+L\right)I$, and let
$$
K = \left\lceil \frac{2\max\{\kappa_1,\kappa_2,\kappa_3\}}{\tau\epsilon^2}\left(\Psi_G(x_1^1,\cdots,x_N^1,\lambda^1,x_N^0)-\sum_{i=1}^{N-1}r_i^*-f^*\right)\right\rceil,
$$
and	$k^* = \argmin_{2\leq k\leq K+1}\sum_{i=1}^N(\|\xk_i-\xke_i\|^2+\|\xkm_i-\xk_i\|^2)$. Then, $(x_1^{k^*+1},\cdots,x_N^{k^*+1},\lambda^{k^*+1})$ is an $\epsilon$-stationary solution defined in Definition \ref{def:epsolu}.
\end{theorem}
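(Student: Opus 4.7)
The plan is to mirror the proof of Theorem \ref{thm:PADMM} step by step, substituting Lemma \ref{lm:PADMM-L-lemma2} for Lemma \ref{lm:PADMM-lemma2} wherever the descent of the potential function $\Psi_G$ is invoked. Because Steps 2 and 3 of Algorithm \ref{alg:PADMM-L} are identical to those of Algorithm \ref{alg:PADMM}, Lemmas \ref{lm:PADMM-lemma1} and \ref{lm:PADMM-lemma3}, as well as the representation \eqref{To-hard-to-give-a-name-TAT} of $\lambda^{k+1}$ in terms of $\nabla_N f$ and $x_N^k - x_N^{k+1}$, remain valid without modification. Under the strengthened hypothesis $H_i \succ (6L^2/\beta + L)I$ the constant $\tau$ in the theorem is still strictly positive.

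First I would introduce $\theta_k := \sum_{i=1}^N (\|x_i^k-x_i^{k+1}\|^2 + \|x_i^{k-1}-x_i^k\|^2)$ and telescope the descent inequality \eqref{lm_:PADMM-L-lemma2:3} over $k = 1,\ldots,K$. Combining the resulting inequality with the uniform lower bound from Lemma \ref{lm:PADMM-lemma3} gives $\min_{2 \le k \le K+1}\theta_k \le \frac{2}{\tau K}\bigl(\Psi_G(x_1^1,\ldots,x_N^1,\lambda^1,x_N^0) - f^* - \sum_{i=1}^{N-1} r_i^*\bigr)$, which is the exact analogue of \eqref{thm2_3}. Since \eqref{To-hard-to-give-a-name-TAT} is unchanged, the bound \eqref{ConsVio} on $\|\lambda^{k+1} - \nabla_N f(x^{k+1})\|^2 \le \kappa_2 \theta_k$ transfers verbatim, and since Step 3 and Lemma \ref{lm:PADMM-lemma1} still apply, the primal residual bound \eqref{lambda}, $\|\sum_i A_i x_i^{k+1} - b\|^2 \le \kappa_1 \theta_k$, is also unchanged.

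The one step that has to be reworked is the Riemannian stationarity bound \eqref{otherblock}. Because Step 1 of Algorithm \ref{alg:PADMM-L} minimizes the linearized model $\hat{\cL}^i_\beta$, its first-order condition now evaluates $\nabla_i f$ at the partially updated point $(x_1^{k+1},\ldots,x_{i-1}^{k+1},x_i^k,x_{i+1}^k,\ldots,x_N^k)$ rather than at $(x_1^{k+1},\ldots,x_i^{k+1},x_{i+1}^k,\ldots,x_N^k)$. Reading off the optimality condition and comparing with $\nabla_i f(x^{k+1})$ as in the proof of Theorem \ref{thm:PADMM}, one obtains a residual of the form
\[
\|\nabla_i f(x^{k+1}) - \nabla_i f(x_1^{k+1},\ldots,x_{i-1}^{k+1},x_i^k,\ldots,x_N^k)\| + \|H_i(x_i^{k+1}-x_i^k)\| + \beta \|A_i\|_2^2 \sqrt{N}\,\sqrt{\sum_{j>i}\|x_j^{k+1}-x_j^k\|^2}.
\]
The first term is controlled by $L \sqrt{\sum_{j \ge i}\|x_j^{k+1}-x_j^k\|^2} \le L\sqrt{\theta_k}$ by the Lipschitz hypothesis \eqref{eq:assumption-2-lips}; the inclusion of the $i$-th coordinate jump is the only new feature, and it is still absorbed into $\sqrt{\theta_k}$, so the sum is bounded by $\sqrt{\kappa_3\,\theta_k}$ with the same constant $\kappa_3$. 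Combining this with the bounds on $\kappa_1 \theta_{k^*}$ and $\kappa_2 \theta_{k^*}$ at the minimizing index $k^*$ and using the definition of $K$ shows that $(x_1^{k^*+1},\ldots,x_N^{k^*+1},\lambda^{k^*+1})$ meets each of the three criteria in Definition \ref{def:epsolu}.

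The only real subtlety, and the reason the condition on $H_i$ is tightened from $H_i \succ (6L^2/\beta) I$ to $H_i \succ (6L^2/\beta + L) I$, is that in proving Lemma \ref{lm:PADMM-L-lemma2} one picks up an extra $(L/2)\|x_i^{k+1}-x_i^k\|^2$ term from the descent lemma applied to the $L$-smooth function $f$ (since the subproblem no longer decreases $f$ exactly, only its linearization plus the $H_i$-proximal term). This extra quadratic term has to be dominated by $\tfrac12 H_i$ to keep the coefficients of $\|x_i^{k+1}-x_i^k\|^2$ negative in the telescoping step, which is exactly the adjustment reflected in the new definition of $\tau$. Once this bookkeeping is carried through, the complexity accounting is identical to that of Theorem \ref{thm:PADMM}, yielding the claimed $O(1/\epsilon^2)$ rate.
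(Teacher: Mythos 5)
Your proposal is correct and follows exactly the route the paper intends: the paper omits the proof of Theorem \ref{thm:PADMM-L} precisely because it repeats the proof of Theorem \ref{thm:PADMM} with Lemma \ref{lm:PADMM-L-lemma2} substituted for Lemma \ref{lm:PADMM-lemma2}, and you correctly identify the only nontrivial adjustment — that the subproblem's first-order condition now evaluates $\nabla_i f$ at the point containing $x_i^k$ rather than $x_i^{k+1}$, so the Lipschitz bound picks up the $i$-th block difference, which is still absorbed into $\sqrt{\theta_k}$ with the same $\kappa_3$. Your explanation of why $H_i$ must be tightened to $H_i \succ (6L^2/\beta + L)I$ also matches the extra $\frac{L}{2}\|x_i^{k+1}-x_i^k\|^2$ term appearing in \eqref{lm_:PADMM-L-lemma2:3}.
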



\subsection{ Stochastic linearized proximal ADMM }

In machine learning applications, the objective is often in the form of
\[
f(x_1,\cdots,x_N) = \frac{1}{m}\sum_{i=1}^{m}f_i(x_1,\cdots,x_N),
\]
where $f_i$ corresponds to the loss function of the $i$th training data, and the sample size $m$ can be a very large number.
In rank-1 CP tensor decomposition problem, people aim to find the best rank-1 CP approximation of an order-$d$ tensor $\bT\in\mathbb{R}^{n_1\times\cdots\times n_d}$. With proper transformation, the objective function $f$ is
\[
f(x_1,\cdots,x_N) = \langle \bT,\otimes_{i=1}^dx_i\rangle,
\]
where complete description of $\bT$ is exponentially expensive.
In such cases, function evaluations in Algorithm \ref{alg:PADMM}, and the gradient evaluations in Algorithm \ref{alg:PADMM-L} are prohibitively expensive. In this section, we propose a nonconvex linearized stochastic proximal gradient-based ADMM with mini-batch to resolve this problem.
First, let us make the following assumption.

\begin{assumption}
For smooth $f$ and $i=1,\ldots,N$, there exists a stochastic first-order oracle that returns a noisy estimation to the partial gradient of $f$ with respect to $x_i$, and the noisy estimation $G_i(x_1,\cdots,x_N,\xi_i)$ satisfies
\begin{eqnarray}
	& & \E [G_i(x_1,\cdots,x_N,\xi_i)] = \nabla_i f(x_1,\cdots,x_N), \label{assumption-4-Unbias} \\
	& & \E \left[\|G_i(x_1,\cdots,x_N,\xi_i) - \nabla_i f(x_1,\cdots,x_N)\|^2\right]\leq \sigma^2, \label{assumption-5-VarBound}
\end{eqnarray}
where the expectation is taken with respect to the random variable $\xi_i$.
\end{assumption}

Let $M$ be the size of mini-batch, and denote
$$
G_i^M(x_1,\cdots,x_N) := \frac{1}{M}\sum_{j= 1}^M G_i(x_1,\cdots,x_N,\xi_i^j),
$$
where $\xi_i^j, j = 1,...,M$ are i.i.d.\ random variables. Clearly it holds that
\[\E [G_i^M(x_1,\cdots,x_N)] = \nabla_if(x_1,\cdots,x_N)\]
and
\bea
\label{BSFO}
\E \left[\|G_i^M(x_1,\cdots,x_N)- \nabla_if(x_1,\cdots,x_N)\|^2\right]\leq \sigma^2/M.
\eea
Now, the stochastic linear approximation of the augmented Lagrangian function with respect to block $x_i$ at point $(\hat{x}_1,\cdots,\hat{x}_N)$ is defined as (note that $r_N\equiv 0$):
\bea
\label{LagApprox-Stochastic}
\tilde{\cL}_{\beta}^i(x_i;\hat{x}_1,\cdots,\hat{x}_N,\lambda;M) & = &f(\hat{x}_1,\cdots,\hat{x}_N)+\langle G_i^M(\hat{x}_1,\cdots,\hat{x}_N),x_i-\hat{x}_i\rangle +r_i(x_i) \nonumber\\
& & -\bigg\langle \sum_{j\neq i}^{N}A_j\hat{x}_j+ A_ix_i-b,\lambda\bigg\rangle +\frac{\beta}{2}\bigg\|\sum_{j\neq i}^{N}A_j\hat{x}_j+ A_ix_i-b\bigg\|^2,
\eea
where $\lambda$ and $\beta>0$ follow the previous definitions. Compared to \eqref{LagApprox}, the full partial derivative $\nabla_if$ is replaced by the sample average of stochastic first-order oracles.

\begin{algorithm2e}[H]
	\caption{Nonconvex Linearized Stochastic Proximal Gradient-Based ADMM}
	\label{alg:PADMM-S}
	Given $(x_1^0,x_2^0,\cdots,x_N^0)\in(\mathcal{M}_1\cap X_1)\times(\mathcal{M}_2\cap X_2)\times \cdots \times(\mathcal{M}_{N-1}\cap X_{N-1})\times\RR^{n_N}$, $\lambda^0\in\RR^m$, $\beta>0$, $\gamma>0$, $H_i\succ 0, i=1,\ldots,N-1$, and the batch-size $M$. \\
	\For{$k = 0,1,...$ }{
		$\mbox{[Step 1]}$ For $i = 1,2,...,N-1$, and positive semi-definite matrix $H_i$, compute  $x_i^{k+1} = \argmin_{x_i\in \mathcal{M}_i\cap X_i }\tilde{\cL}_{\beta}^i(x_i;\xke_1,\cdots,\xke_{i-1},\xk_i,\cdots,\xk_N,\lambda^k;M)+\frac{1}{2}\|x_i-x_i^k\|^2_{H_i}$; \\
		$\mbox{[Step 2]}$ $x_{N}^{k+1} = x_N^k-\gamma\nabla_N\tilde{\mathcal{L}}^N_{\beta}(x_1^{k+1},\cdots,x_{N-1}^{k+1},x_N^k,\lambda^k)$; \\
		$\mbox{[Step 3]}$ $\lambda^{k+1} = \lambda^k-\beta(\sum_{i = 1}^{N}A_ix_i^{k+1}-b)$.
	}
\end{algorithm2e}


The convergence analysis of this algorithm follows the similar logic as that of the previous two algorithms. The proofs of these lemmas can be found in the appendix.

\begin{lemma}\label{lm:PADMM-S-lemma1} The following inequality holds:
	\bea
	\label{sto:1}
	\E[\|\lambda^{k+1}-\lambda^k\|^2] & \leq & 4\left(\beta-\frac{1}{\gamma}\right)^2\E[\|\xk_N-\xke_N\|^2]+4\left[\left(\beta-\frac{1}{\gamma}\right)^2+L^2\right]\E[\|\xkm_N-\xk_N\|^2] \nonumber \\
	& & +4L^2\sum_{i=1}^{N-1}\E[\|\xk_i-\xke_i\|^2] + \frac{8}{M}\sigma^2.
	\eea
\end{lemma}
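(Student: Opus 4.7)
The plan is to mirror the proof of the deterministic Lemma \ref{lm:PADMM-lemma1}, isolating all the additional randomness into a single noise-difference term at the end. Since Steps 2 and 3 of Algorithm \ref{alg:PADMM-S} differ from Algorithm \ref{alg:PADMM} only by replacing $\nabla_N f$ with the mini-batch oracle $G_N^M$, I would first derive the stochastic analogue of \eqref{To-hard-to-give-a-name-TAT}. Unfolding Step 2 as
\[
\xke_N = \xk_N - \gamma\Bigl[G_N^M(\xke_1,\ldots,\xke_{N-1},\xk_N) - \lambda^k + \beta\Bigl(\textstyle\sum_{i=1}^{N-1} A_i\xke_i + \xk_N - b\Bigr)\Bigr]
\]
and substituting Step 3 to eliminate the $\beta$-bracket (the same manipulation that produced \eqref{To-hard-to-give-a-name-TAT}) yields
\[
\lambda^{k+1} = \Bigl(\beta-\tfrac{1}{\gamma}\Bigr)(\xk_N-\xke_N) + G_N^M(\xke_1,\ldots,\xke_{N-1},\xk_N).
\]

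Next, I would define the stochastic error $\eta_{k+1} := G_N^M(\xke_1,\ldots,\xke_{N-1},\xk_N) - \nabla_N f(\xke_1,\ldots,\xke_{N-1},\xk_N)$ and subtract the identity above from its counterpart at index $k-1$. This writes $\lambda^{k+1}-\lambda^k$ as a sum of four pieces: the two linear terms $(\beta-1/\gamma)(\xk_N-\xke_N)$ and $(\beta-1/\gamma)(\xk_N-\xkm_N)$, the deterministic gradient difference $\nabla_N f(\xke_1,\ldots,\xke_{N-1},\xk_N) - \nabla_N f(\xk_1,\ldots,\xk_{N-1},\xkm_N)$, and the noise difference $\eta_{k+1}-\eta_k$. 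Applying the elementary inequality $(a+b+c+d)^2\leq 4(a^2+b^2+c^2+d^2)$ together with the $L$-Lipschitz property of $\nabla f$ from Assumption \ref{assumption-2-Lips} recovers the first three groups of terms on the right-hand side of \eqref{sto:1}, with the constant enlarged from $3$ to $4$ relative to Lemma \ref{lm:PADMM-lemma1}, as expected.

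The main obstacle is the bound on $\E\|\eta_{k+1}-\eta_k\|^2$; naively invoking $\|\eta_{k+1}-\eta_k\|^2 \leq 2\|\eta_{k+1}\|^2+2\|\eta_k\|^2$ would produce $16\sigma^2/M$ rather than the claimed $8\sigma^2/M$. The sharper bound requires exploiting the independence of the batches used in different iterations. Specifically, the samples drawn during iteration $k$ (which define $\eta_{k+1}$) are independent of the filtration $\cF_k$ generated by all previous iterates and batches, so \eqref{assumption-4-Unbias} gives $\E[\eta_{k+1}\mid\cF_k]=0$; since $\eta_k$ is $\cF_k$-measurable, the tower property eliminates the cross term and the identity $\E\|\eta_{k+1}-\eta_k\|^2 = \E\|\eta_{k+1}\|^2+\E\|\eta_k\|^2 \leq 2\sigma^2/M$ follows from \eqref{BSFO}. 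Multiplication by the factor $4$ then produces exactly the $8\sigma^2/M$ term in \eqref{sto:1}. Once this filtration bookkeeping is set up carefully, the remainder is a mechanical lifting of the deterministic argument.
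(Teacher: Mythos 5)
Your proposal is correct and follows essentially the same route as the paper's proof: the same stochastic analogue of \eqref{To-hard-to-give-a-name-TAT}, the same four-term decomposition of $\lambda^{k+1}-\lambda^k$ with the inequality $(a+b+c+d)^2\leq 4(a^2+b^2+c^2+d^2)$, and the same cancellation of the cross term $\E[\langle \delta_N^k,\delta_N^{k-1}\rangle]=0$ to get $8\sigma^2/M$ instead of $16\sigma^2/M$. Your explicit filtration argument is simply a more careful justification of that orthogonality, which the paper asserts without elaboration.
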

In the stochastic setting, define the new potential function
\be
\label{sto:psi}
\Psi_S(x_1,\cdots,x_N,\lambda,\bar{x}) = \cL_\beta(x_1,\cdots,x_N,\lambda) + \frac{4}{\beta}\left[\left(\beta-\frac{1}{\gamma}\right)^2+L^2\right]\|\bar{x}-x_N\|^2.
\ee

\begin{lemma}\label{lm:PADMM-S-lemma2}
	Suppose the sequence $\{(x^k_1,..., x^k_N,\lambda_k)\}$ is generated by Algorithm \ref{alg:PADMM-S}. Define $\Delta = 17\beta^2-16(L+1)\beta-128L^2$, and assume that
	\be
	\label{sto:beta}
	\beta\in\left(\frac{8(L+1)+8\sqrt{(L+1)^2+34L^2}}{17},+\infty\right), H_i\succ\left(\frac{8L^2}{\beta}+L+1\right)I, \ i = 1,\ldots,N-1,
	\ee
	\be
	\label{sto:gamma}
	\gamma\in\left(\frac{16}{17\beta+\sqrt{\Delta}},\frac{16}{17\beta-\sqrt{\Delta}}\right).
	\ee
	Then it holds that
	\bea
	\label{sto:decrease_psi}
	& & \E[\Psi_S(\xke_1,\cdots,\xke_{N-1},\xke_N,\lambda^{k+1},\xk_N)] - \E[\Psi_S(\xk_1,\cdots,\xk_{N-1},\xk_N,\lambda^{k},\xkm_N)] \nonumber\\
	&\leq & \left[\frac{\beta+L}{2}-\frac{1}{\gamma}+\frac{8}{\beta}\left(\beta-\frac{1}{\gamma}\right)^2+ \frac{4L^2}{\beta}+\half\right]\E[\|\xke_N-\xk_N\|^2] \\
	& & - \sum_{i = 1}^{N-1}\E\left[\|\xk_i-\xke_i\|^2_{\frac{1}{2}H_i-\frac{4L^2}{\beta}I-\frac{L+1}{2}I}\right]+ \left(\frac{8}{\beta}+\frac{N}{2}\right)\frac{\sigma^2}{M},\nonumber
	\eea
and the coefficient in front of $\E[\|\xke_N-\xk_N\|^2]$ is negative.
\end{lemma}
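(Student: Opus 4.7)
The plan is to extend the one-step descent argument from Lemma \ref{lm:PADMM-L-lemma2} to Algorithm \ref{alg:PADMM-S} by carrying an extra noise term $e_i^k := G_i^M(\cdots) - \nabla_i f(\cdots)$ through every block update. Since Step~3 is unchanged and Step~2 only replaces $\nabla_N f$ by $G_N^M$, Lemma~\ref{lm:PADMM-S-lemma1}, with coefficients $4$ in place of $3$ and an additive residue $\tfrac{8\sigma^2}{M}$, plays the role that Lemma~\ref{lm:PADMM-lemma1} played in the deterministic analysis. The tail $\tfrac{4}{\beta}\bigl[(\beta-\tfrac{1}{\gamma})^2+L^2\bigr]\|\bar x-x_N\|^2$ in $\Psi_S$ is chosen precisely so that, together with the dual-step identity $\tfrac{1}{\beta}\|\lambda^{k+1}-\lambda^k\|^2$, the $\|\xkm_N-\xk_N\|^2$ mass from Lemma~\ref{lm:PADMM-S-lemma1} telescopes with the retreating tail, advancing the coefficient on $\|\xke_N-\xk_N\|^2$ to $\tfrac{8}{\beta}(\beta-\tfrac{1}{\gamma})^2+\tfrac{4L^2}{\beta}$ and leaving only the stochastic residue $\tfrac{8\sigma^2}{\beta M}$.

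For each primal block $i<N$, I would use the fact that $\xke_i$ minimizes the linearized subproblem while $\xk_i\in\cM_i\cap X_i$ is feasible, giving $\tilde{\mathcal{L}}_\beta^i(\xke_i;\cdots)-\tilde{\mathcal{L}}_\beta^i(\xk_i;\cdots)\le -\tfrac{1}{2}\|\xke_i-\xk_i\|_{H_i}^2$. Bridging this to the true augmented Lagrangian $\mathcal{L}_\beta$ requires two corrections: the linearization error of $f$, controlled via the descent lemma and contributing $\tfrac{L}{2}\|\xke_i-\xk_i\|^2$; and the stochastic gap $\langle e_i^k,\xk_i-\xke_i\rangle$, split by Young's inequality into $\tfrac{1}{2}\|e_i^k\|^2+\tfrac{1}{2}\|\xke_i-\xk_i\|^2$. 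Conditioning on the filtration at which $G_i^M$ is drawn and applying \eqref{BSFO} replaces each $\tfrac{1}{2}\|e_i^k\|^2$ by $\tfrac{\sigma^2}{2M}$; the lingering $\tfrac{L+1}{2}\|\xke_i-\xk_i\|^2$ is what forces $H_i\succ\bigl(\tfrac{8L^2}{\beta}+L+1\bigr)I$ rather than the deterministic $\bigl(\tfrac{6L^2}{\beta}+L\bigr)I$. For the $x_N$ update, I would write Step~2 as $\xke_N-\xk_N=-\gamma\bigl(\nabla_N\mathcal{L}_\beta(\xke_{<N},\xk_N,\lambda^k)+e_N^k\bigr)$, apply the $(L+\beta)$-smoothness descent lemma in $x_N$, substitute for $\nabla_N\mathcal{L}_\beta$, and Young-split $\langle e_N^k,\xke_N-\xk_N\rangle$, obtaining in expectation $\bigl(\tfrac{L+\beta+1}{2}-\tfrac{1}{\gamma}\bigr)\E[\|\xke_N-\xk_N\|^2]+\tfrac{\sigma^2}{2M}$.

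Summing the decreases from all primal blocks, the dual step, and the potential tail then reproduces \eqref{sto:decrease_psi} coefficient for coefficient: the multiplier of $\E[\|\xke_N-\xk_N\|^2]$ becomes $\tfrac{L+\beta}{2}+\tfrac{1}{2}-\tfrac{1}{\gamma}+\tfrac{8}{\beta}(\beta-\tfrac{1}{\gamma})^2+\tfrac{4L^2}{\beta}$, the matrix multiplier of each $\E[\|\xk_i-\xke_i\|^2]$ is $-\bigl(\tfrac{1}{2}H_i-\tfrac{4L^2}{\beta}I-\tfrac{L+1}{2}I\bigr)$, and the stochastic residues aggregate as $(N-1)\tfrac{\sigma^2}{2M}+\tfrac{\sigma^2}{2M}+\tfrac{8\sigma^2}{\beta M}=\bigl(\tfrac{8}{\beta}+\tfrac{N}{2}\bigr)\tfrac{\sigma^2}{M}$. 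Negativity of the $\|\xke_N-\xk_N\|^2$ coefficient under \eqref{sto:beta}--\eqref{sto:gamma} is a routine quadratic calculation: $\Delta=17\beta^2-16(L+1)\beta-128L^2\ge 0$ is equivalent to the lower bound on $\beta$, after which the prescribed interval for $1/\gamma$ is exactly the region where the induced quadratic in $1/\gamma$ is negative. The main delicate point is the bookkeeping: the jump from $3$ to $4$ in Lemma~\ref{lm:PADMM-S-lemma1} must propagate consistently through all Young steps, and one must verify that each $G_i^M$ is drawn independently of everything measurable before Step~1 of block $i$ so that conditional expectations of $\|e_i^k\|^2$ factor cleanly via \eqref{BSFO}.
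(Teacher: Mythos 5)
Your proposal is correct and follows essentially the same route as the paper: block-wise descent via subproblem optimality plus the descent lemma, Young's inequality to absorb each noise term $\langle \delta_i^k, \xke_i-\xk_i\rangle$ into $\tfrac{1}{2}\|\xke_i-\xk_i\|^2+\tfrac{\sigma^2}{2M}$, the dual-step identity combined with Lemma \ref{lm:PADMM-S-lemma1} and the enlarged tail in $\Psi_S$, and the same quadratic-discriminant calculation for negativity. Your explicit attention to the filtration/independence needed for \eqref{BSFO} to apply conditionally is a point the paper passes over silently, but it does not change the argument.
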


\begin{lemma}\label{lm:PADMM-S-lemma3}
	Suppose the sequence $\{\xk_1,\cdots,\xk_N,\lambda^k\}$ is generated by Algorithm \ref{alg:PADMM-S}. It holds that
	\bea
	\label{sto:lowerbound}
	\E[\Psi_S(\xke_1,\cdots,\xke_N,\lambda^{k+1},\xk_N)] &\geq& \sum_{i=1}^{N-1}r_i^*+f^*-\frac{2\sigma^2}{\beta M} \geq \sum_{i=1}^{N-1}r_i^*+f^*-\frac{2\sigma^2}{\beta}.
	\eea
\end{lemma}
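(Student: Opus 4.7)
My plan is to lower bound $\E[\Psi_S]$ by first separating the $-\|\lambda^{k+1}\|^2$ contribution in $\cL_\beta$ from its non-negative part, and then controlling that contribution using the algorithm's update rules together with the mini-batch noise bound \eqref{BSFO}. The structure mirrors that of the deterministic Lemma \ref{lm:PADMM-lemma3}, with extra care taken for the zero-mean noise $\xi := G_N^M - \nabla_N f$. First, completing the square in \eqref{Lagrangian} gives
\[
\cL_\beta(\xke,\lambda^{k+1}) = f(\xke) + \sum_{i=1}^{N-1} r_i(\xke_i) + \frac{\beta}{2}\Bigl\|w^{k+1} - \tfrac{\lambda^{k+1}}{\beta}\Bigr\|^2 - \frac{1}{2\beta}\|\lambda^{k+1}\|^2,
\]
where $w^{k+1} := \sum_{i=1}^N A_i\xke_i - b$. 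Dropping the non-negative quadratic term and invoking Assumption \ref{assumption-1-Lbounds} produces the pointwise bound $\cL_\beta(\xke,\lambda^{k+1}) \geq f(\xke) + \sum_{i=1}^{N-1} r_i^* - \frac{1}{2\beta}\|\lambda^{k+1}\|^2$; it therefore suffices to upper bound $\E\|\lambda^{k+1}\|^2$.

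By the same manipulation of Steps 2 and 3 of Algorithm \ref{alg:PADMM-S} that produced \eqref{To-hard-to-give-a-name-TAT} in the deterministic setting, one obtains the stochastic analog $\lambda^{k+1} = G_N^M(\xke_1,\ldots,\xke_{N-1},\xk_N) + (\beta - 1/\gamma)(\xk_N - \xke_N)$. Applying the elementary Young's inequality and then taking expectation, and using unbiasedness together with \eqref{BSFO}, we get $\E\|\lambda^{k+1}\|^2 \leq 2(\beta-1/\gamma)^2\,\E\|\xk_N - \xke_N\|^2 + 2\,\E\|\nabla_N f(\xke_1,\ldots,\xk_N)\|^2 + 2\sigma^2/M$. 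The main obstacle here is the term $\E\|\nabla_N f\|^2$, which a priori is not bounded by any algorithmic quantity. The crucial observation is the classical descent-lemma consequence $\|\nabla g(x)\|^2 \leq 2L(g(x) - \inf g)$, valid for every smooth $g$ with $L$-Lipschitz gradient that is bounded below; applied to $g(x_N) := f(\xke_1,\ldots,\xke_{N-1},x_N)$ (whose infimum over $x_N \in \R^{n_N}$ is at least $f^*$), this yields $\|\nabla_N f(\xke)\|^2 \leq 2L(f(\xke) - f^*)$. A Young's inequality with parameter $\delta = 1/3$, combined with the Lipschitz continuity of $\nabla_N f$ in $x_N$, then gives
\[
\|\nabla_N f(\xke_1,\ldots,\xk_N)\|^2 \leq \tfrac{8L}{3}\bigl(f(\xke) - f^*\bigr) + 4L^2\|\xk_N - \xke_N\|^2.
\]

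Assembling all pieces into $\Psi_S$ defined by \eqref{sto:psi}, the net coefficient of $\E\|\xk_N - \xke_N\|^2$ simplifies to $\frac{3(\beta - 1/\gamma)^2}{\beta} \geq 0$ (this is precisely why the weight $\tfrac{4}{\beta}[(\beta-1/\gamma)^2 + L^2]$ was chosen in the definition of $\Psi_S$), and the coefficient of $\E[f(\xke)]$ becomes $1 - \tfrac{8L}{3\beta}$; a routine algebraic check confirms that the assumption \eqref{sto:beta} implies $\beta > 8L/3$, so this coefficient is non-negative. Combining with $\E[f(\xke)] \geq f^*$ yields $\E[\Psi_S] \geq f^* + \sum_{i=1}^{N-1} r_i^* - \sigma^2/(\beta M)$, which is stronger than the first inequality of the lemma; the second inequality is immediate from $M \geq 1$.
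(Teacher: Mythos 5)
Your proof is correct, but it follows a genuinely different route from the paper's. The paper substitutes the identity $\lambda^{k+1}=(\beta-\tfrac{1}{\gamma})(\xk_N-\xke_N)+\nabla_Nf(\xke_1,\ldots,\xke_{N-1},\xk_N)+\delta_N^k$ directly into the linear term $-\langle \sum_i A_i\xke_i-b,\lambda^{k+1}\rangle$ of $\cL_\beta$, and then uses the descent lemma in the $x_N$-direction to bound $f(\xke)-\langle\sum_iA_i\xke_i-b,\nabla_Nf(\xke)\rangle$ from below by $f(\xke_1,\ldots,\xke_{N-1},b-\sum_{i=1}^{N-1}A_i\xke_i)-\tfrac{L}{2}\|\sum_iA_i\xke_i-b\|^2\geq f^*-\tfrac{L}{2}\|\cdot\|^2$, absorbing all remaining cross terms into the quadratic penalty $\tfrac{\beta}{2}\|\sum_iA_i\xke_i-b\|^2$. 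You instead complete the square to isolate $-\tfrac{1}{2\beta}\|\lambda^{k+1}\|^2$ and control $\|\nabla_Nf\|^2$ via the gradient-domination inequality $\|\nabla g\|^2\leq 2L(g-\inf g)$ for $L$-smooth functions bounded below --- an ingredient the paper never uses. Both arguments hinge on the same two structural facts: the closed form of $\lambda^{k+1}$ from Steps 2--3, and the weight $\tfrac{4}{\beta}[(\beta-\tfrac{1}{\gamma})^2+L^2]$ in $\Psi_S$ that cancels (or, in your case, dominates) the resulting $-\|\xk_N-\xke_N\|^2$ terms. One step of yours deserves to be made explicit: the coefficient $2$ (rather than $3$) in your bound on $\E\|\lambda^{k+1}\|^2$ only comes out if you group $\lambda^{k+1}$ as $(\beta-\tfrac{1}{\gamma})(\xk_N-\xke_N)$ plus $G_N^M=\nabla_Nf+\delta_N^k$, apply Young's inequality to these \emph{two} summands, and then expand $\E\|G_N^M\|^2=\E\|\nabla_Nf\|^2+\E\|\delta_N^k\|^2$ using $\E[\delta_N^k\mid\mathcal F_k]=0$; a naive three-term Young's inequality would give $3$ and the subsequent sign bookkeeping would no longer close (the cross term $\E\langle \xk_N-\xke_N,\delta_N^k\rangle$ does not vanish since $\xke_N$ depends on $\delta_N^k$). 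With that grouping everything checks out: the net coefficient of $\E\|\xk_N-\xke_N\|^2$ is indeed $\tfrac{3}{\beta}(\beta-\tfrac{1}{\gamma})^2\geq 0$, and \eqref{sto:beta} gives $\beta>\tfrac{8(1+\sqrt{34})}{17}L>\tfrac{8L}{3}$, so the $f$-coefficient is nonnegative. Your route even yields the slightly sharper constant $-\sigma^2/(\beta M)$ in place of $-2\sigma^2/(\beta M)$, at the cost of needing $\beta>\tfrac{8L}{3}$ where the paper's argument needs only $\beta>2L$; both are guaranteed by \eqref{sto:beta}.
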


We are now ready to present the iteration complexity result for Algorithm \ref{alg:PADMM-S}.

\begin{theorem}\label{thm:PADMM-S}
	Suppose that the sequence $\{\xk_1,...,\xk_N,\lambda^k\}$ is generated by Algorithm \ref{alg:PADMM-S}. Let the parameters $\beta$ and $\gamma$ satisfy \eqref{sto:beta} and \eqref{sto:gamma} respectively. Define $\kappa_1 := \frac{4}{\beta^2}\left[\left(\beta-\frac{1}{\gamma}\right)^2+L^2\right]$, $\kappa_2 := 3\left[\left(\beta-\frac{1}{\gamma}\right)^2+L^2\right]$, $\kappa_3 := 2\left(L+\beta\sqrt{N}\max_{1\leq i\leq N}\{\|A_i\|_2^2\}+\max_{1\leq i\leq N-1}\|H_i\|_2\right)^2$ , $\kappa_4 = \frac{2}{\tau}\left(\frac{8}{\beta}+\frac{N}{2}\right)$ with \\
	$\tau:= \min\left\{ -\left(\frac{\beta+L}{2}-\frac{1}{\gamma}+\frac{8}{\beta}\left[\beta-\frac{1}{\gamma}\right]^2+\frac{4L^2}{\beta}+\frac{1}{2}\right), \min_{i = 1,...,N-1}\left\{-\left(\frac{4L^2}{\beta}+\frac{L+1}{2}-\frac{\sigma_{\min}(H_i)}{2}\right) \right\} \right\} $ . Assume   $H_i\succ(\frac{8L^2}{\beta}+L+1)I$ and let  $$M\geq\frac{2\sigma^2}{\epsilon^2}\max\{\kappa_1\kappa_4+\frac{8}{\beta^2},\kappa_2\kappa_4+3,\kappa_3\kappa_4+2\},$$
	$$K = \left\lceil \frac{4\max\{\kappa_1,\kappa_2,\kappa_3\}}{\tau\epsilon^2}\left(\E[\Psi_G(x_1^1,...,x_N^1,\lambda^1,x_N^0)]-\sum_{i=1}^{N-1}r_i^*-f^*+\frac{2\sigma^2}{\beta}\right)\right\rceil.$$
	Let $k^* = \argmin_{2\leq k\leq K+1}\sum_{i=1}^N(\|\xk_i-\xke_i\|^2+\|\xkm_i-\xk_i\|^2),$ then $(x_1^{k^*+1},\cdots,x_N^{k^*+1},\lambda^{k^*+1})$is an $\epsilon$-stationary solution in accordance of Definition \ref{def:epsolu-exp}.
\end{theorem}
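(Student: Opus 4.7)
The plan is to parallel the proof of Theorem~\ref{thm:PADMM} with expectations taken throughout, and with careful bookkeeping of the extra $\sigma^2/M$ residuals introduced by the stochastic oracle. Setting $\theta_k := \sum_{i=1}^N (\|x_i^k-x_i^{k+1}\|^2 + \|x_i^{k-1}-x_i^k\|^2)$, I first sum the per-iteration descent inequality \eqref{sto:decrease_psi} of Lemma~\ref{lm:PADMM-S-lemma2} over $k=1,\ldots,K$. The $\Psi_S$ terms telescope on the left, and invoking the lower bound of Lemma~\ref{lm:PADMM-S-lemma3} for $\E[\Psi_S^{K+1}]$ produces
\[
\tau \sum_{k=1}^K \sum_{i=1}^N \E[\|x_i^k-x_i^{k+1}\|^2] \;\leq\; \E[\Psi_S^1] - \sum_{i=1}^{N-1} r_i^* - f^* + \frac{2\sigma^2}{\beta} + K\left(\frac{8}{\beta}+\frac{N}{2}\right)\frac{\sigma^2}{M}.
\]
The ``min $\le$ average'' step used in the deterministic case (see \eqref{thm2_3}) then carries over verbatim and yields
\[
\min_{2\leq k\leq K+1} \E[\theta_k] \;\leq\; \frac{4}{\tau K}\Bigl[\E[\Psi_S^1] - \sum_{i=1}^{N-1} r_i^* - f^* + \tfrac{2\sigma^2}{\beta}\Bigr] + \kappa_4\,\frac{\sigma^2}{M}.
\]

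Next, I would bound each of the three $\epsilon$-stationarity quantities from Definition~\ref{def:epsolu-exp} by a multiple of $\E[\theta_{k^*}]$ plus a residual of order $\sigma^2/M$. Combining Steps~2 and~3 exactly as in the derivation of \eqref{To-hard-to-give-a-name-TAT}, but with the stochastic gradient, yields $\lambda^{k+1} = (\beta-1/\gamma)(x_N^k - x_N^{k+1}) + G_N^M(x_1^{k+1},\ldots,x_{N-1}^{k+1},x_N^k)$. Decomposing $\lambda^{k+1}-\nabla_N f(x^{k+1})$ into the $\beta-1/\gamma$ term, the Lipschitz drift $\nabla_N f(x_1^{k+1},\ldots,x_N^k) - \nabla_N f(x^{k+1})$, and the stochastic error $G_N^M - \nabla_N f(x_1^{k+1},\ldots,x_N^k)$, applying $(a+b+c)^2\le 3(a^2+b^2+c^2)$, and using \eqref{BSFO}, gives $\E[\|\nabla_N f - A_N^\T\lambda^{k+1}\|^2] \leq \kappa_2\,\E[\theta_k] + 3\sigma^2/M$. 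The primal-feasibility bound follows from $\|\sum_i A_i x_i^{k+1}-b\|^2 = \|\lambda^{k+1}-\lambda^k\|^2/\beta^2$ together with Lemma~\ref{lm:PADMM-S-lemma1}, delivering $\E[\|\sum_i A_i x_i^{k+1}-b\|^2] \leq \kappa_1\,\E[\theta_k] + 8\sigma^2/(\beta^2 M)$. For each block $i<N$, I would reprise the argument behind \eqref{otherblock} starting from the Step~1 optimality condition with $\nabla_i f$ replaced by $G_i^M$; the extra term $G_i^M - \nabla_i f(x_1^{k+1},\ldots,x_i^{k+1},x_{i+1}^k,\ldots,x_N^k)$ is a pure stochastic error bounded in mean-square by $\sigma^2/M$ via \eqref{BSFO}, and after an $(a+b)^2 \le 2(a^2+b^2)$ separation one arrives at $\E[\dist^2] \leq \kappa_3\,\E[\theta_k] + 2\sigma^2/M$. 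Jensen's inequality $\E[\|X\|]\leq\sqrt{\E[\|X\|^2]}$ then turns each of these mean-square bounds into the first-moment bound that Definition~\ref{def:epsolu-exp} requires.

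The main obstacle is the two-fold appearance of the noise variance. On one side, $\sigma^2/M$ sits inside the potential-descent inequality and therefore contaminates the bound on $\min_k \E[\theta_k]$ through $\kappa_4$; on the other side, $\sigma^2/M$ enters each of the three stationarity inequalities directly, with its own constant $c_j\in\{8/\beta^2,\,3,\,2\}$. The prescription $M \geq (2\sigma^2/\epsilon^2)\max_j\{\kappa_j\kappa_4 + c_j\}$ in the statement is calibrated precisely so that both contributions can be simultaneously charged against $\epsilon^2/2$ with a single batch-size, while $K$ is chosen so that the leading $(4/(\tau K))[\,\cdots\,]$ term is at most $\epsilon^2/(2\max\{\kappa_1,\kappa_2,\kappa_3\})$. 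Keeping this two-level bookkeeping consistent across all three stationarity conditions, without re-deriving the Lipschitz and operator-norm chains from the deterministic proof, is the principal technical care that the argument demands.
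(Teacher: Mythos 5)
Your proposal follows essentially the same route as the paper's own proof: telescope the descent inequality \eqref{sto:decrease_psi} against the lower bound \eqref{sto:lowerbound} to control $\min_k\E[\theta_k]$ up to a $\kappa_4\sigma^2/M$ residual, then establish the three mean-square stationarity bounds $\kappa_2\E[\theta_k]+3\sigma^2/M$, $\kappa_1\E[\theta_k]+8\sigma^2/(\beta^2M)$ and $\kappa_3\E[\theta_k]+2\sigma^2/M$ exactly as in \eqref{sto:thm2}--\eqref{sto:otherblock}, and finish with Jensen's inequality, splitting $\epsilon^2$ evenly between the $K$-dependent term and the batch-size term. The only blemish is the leading coefficient $\frac{4}{\tau K}$ in your bound on $\min_k\E[\theta_k]$, which should be $\frac{2}{\tau K}$ (the ``min $\leq$ average'' step in \eqref{thm2_3} carries over with the same factor $2$); with $4$ the stated $K$ only yields the leading term $\leq \epsilon^2/\max\{\kappa_1,\kappa_2,\kappa_3\}$ rather than the $\epsilon^2/(2\max\{\kappa_1,\kappa_2,\kappa_3\})$ your own calibration correctly calls for, so this is a constant-factor slip rather than a gap in the argument.
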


\begin{proof}
Most parts of the proof are similar to that of Theorem \ref{thm:PADMM}, the only difference is that we need to carry the stochastic errors throughout the process. For simplicity, we shall highlight the key differences.
First, we define $\theta_k$ according to \eqref{thm2_1} and then bound $\E[\theta_{k^*}]$ by
\bea
\label{sto:thm1}
\E[\theta_{k^*}] & \leq & \min_{k=2,...,K+1}\E[\theta_k] \\
& \leq & \frac{2}{\tau K}\left(\E[\Psi_S(x_1^1,...,x_N^1,\lambda^1,x_N^0)]-\sum_{i=1}^{N-1}r_i^*-f^*+\frac{2\sigma^2}{\beta}\right) + \kappa_4\frac{\sigma^2}{M}.\nonumber
\eea
Second, we have
\be
\label{sto:thm2}
\E \left[\|\lambda^{k+1}-\nabla_Nf(\xke_1,...,\xke_N)\|^2\right] \leq \kappa_2\E[\theta_k] + \frac{3\sigma^2}{M},
\ee
\be
\label{sto:thm3}
\E \left[\|\sum_{i=1}^{N-1}A_i\xke_i+\xke_N-b\|^2\right]\leq \kappa_1\E[\theta_k]+\frac{8\sigma^2}{\beta^2M},
\ee
and
\bea \E\left[\dist\left(\Proj_{\cT_{x_i^{k+1}}\mathcal{M}_i}\left(-\nabla_if(x^{k+1})+A_i^\top \lambda^{k+1}-\partial r_i(x_i^{k+1})\right),\mathcal{N}_{X_i}(x_i^{k+1})\right)^2\right]
\leq \kappa_3\E[\theta_k]+\frac{2\sigma^2}{M} . \label{sto:otherblock}
\eea
Finally, apply Jensen's inequality $\E_{\xi}[\sqrt{\xi}]\leq \sqrt{\E_{\xi}[\xi]}$ to the above bounds \eqref{sto:thm1}, \eqref{sto:thm2} and \eqref{sto:thm3}, and choose $K$ as defined, the $\epsilon$-stationary solution defined in \eqref{def:epsolu-exp} holds in expectation.
\end{proof}

\subsection{ A feasible curvilinear line-search variant of ADMM }
We remark that the efficacy of the previous algorithms rely on the solvability of 
the subproblems at Step 1. Though the subproblems may be easy computable as we shall see from application examples in Section \ref{sec:application},
there are also examples where such solutions are not available for many manifolds even when the objective is linearized.
As a remedy
we present in this subsection a feasible curvilinear line-search based variant of the ADMM. First let us make a few additional assumptions.

\begin{assumption}
	In problem \eqref{prob:main}, the manifolds $\cM_i,i = 1,\ldots,N-1$ are compact. The nonsmooth regularizing functions $r_i(x_i)$ vanish, and the constraint sets $X_i = \R^{n_i}$, for $i = 1,...,N-1$.
\end{assumption}

Accordingly, the third part of the optimality condition \eqref{opt_ADMM} is simplified to
\be
\label{opt_ADMM_retraction}
\Proj_{\cT_{x_i^*}\mathcal{M}_i}\left(\nabla_i f(x^*)-A_i^\T \lambda^* \right) = 0, i = 1,...,N-1.
\ee


Let  $R_i(\bar{x}_i, t g)$ be a retraction operator at point $\bar{x}_i\in\mathcal{M}_i$ in direction $g\in\mathcal{T}_{\bar{x}_i}\mathcal{M}_i$. Then a parameterized curve $Y_i(t) = R_i(\bar{x}_i, t g)$ is defined on $\mathcal{M}_i$. In particular, it satisfies
\be
\label{retraction}
Y_i(0) = \bar{x}_i \mbox{ and }  Y'_i(0) = g.
\ee
\begin{proposition}
	\label{assumption-6-retration}
	For retractions $Y_i(t) = R_i(\bar{x}_i, t g), i = 1,...,N-1$, there exist $ L_1,L_2>0$ such that
	\bea
	\|Y_i(t) - Y_i(0)\| & \leq & L_1t\|Y_i'(0)\|,\\
	\|Y_i(t) - Y_i(0) - tY_i'(0)\| & \leq &L_2t^2\|Y_i'(0)\|^2.	
	\eea
\end{proposition}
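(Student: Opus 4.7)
The plan is to combine smoothness of the retraction $R_i$ with compactness of $\cM_i$. Both inequalities will be standard first- and second-order Taylor remainders for the ambient-space curve $Y_i(t) = R_i(\bar{x}_i, tg)$, using only the defining properties $Y_i(0) = \bar{x}_i$ and $Y_i'(0) = g$.

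For the first inequality I would write
\[
Y_i(t) - Y_i(0) = \int_0^t Y_i'(s)\, ds,
\]
so that $\|Y_i(t) - Y_i(0)\| \leq t \sup_{s \in [0,t]} \|Y_i'(s)\|$. The chain rule gives $Y_i'(s) = \partial_2 R_i(\bar{x}_i, sg)[g]$, which is linear in $g$, and smoothness of $R_i$ makes the operator norm of $\partial_2 R_i$ bounded on any compact neighborhood of the zero section of $T\cM_i$, yielding $\|Y_i'(s)\| \leq L_1 \|g\|$. For the second inequality I would use the integral Taylor remainder
\[
Y_i(t) - Y_i(0) - t Y_i'(0) = \int_0^t (t-s)\, Y_i''(s)\, ds,
\]
which bounds the left-hand side by $\frac{t^2}{2} \sup_{s \in [0,t]} \|Y_i''(s)\|$. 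The key observation is that $Y_i''(s) = \partial_2^2 R_i(\bar{x}_i, sg)[g,g]$ is \emph{bilinear} in $g$, which is precisely what produces the $\|g\|^2$ factor; smoothness and boundedness then give $\|Y_i''(s)\| \leq 2 L_2 \|g\|^2$, and the claim follows.

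The main subtlety---and the step that takes some care---is uniformity of the constants in both the base point $\bar{x}_i$ and the tangent vector $g$. I would handle this by rescaling: write $g = \|g\| \hat{g}$ with $\|\hat{g}\| = 1$. Since $\cM_i$ is compact, the unit tangent bundle $\{(\bar{x}_i, \hat{g}) : \bar{x}_i \in \cM_i,\ \|\hat{g}\| = 1\}$ is also compact, and the smooth map $(\bar{x}_i, \hat{g}, \tau) \mapsto R_i(\bar{x}_i, \tau \hat{g})$ together with its first two $\tau$-derivatives admits uniform bounds for $\tau$ in any fixed compact interval. Substituting $\tau = s\|g\|$ back yields the required uniform $L_1, L_2$, provided the product $t\|g\|$ is kept in a bounded range---which is automatic in the line-search setting for which these estimates are designed.
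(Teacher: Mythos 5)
The paper does not prove this proposition itself; it is stated with the remark that it ``was proved as a byproduct of Lemma 3 in \cite{RM_glo}.'' Your argument --- Taylor expansion with integral remainder together with compactness of the unit tangent bundle to obtain uniform bounds on the first two derivatives of the smooth retraction, after rescaling $g=\|g\|\hat g$ --- is precisely the standard proof from that reference and is correct, including the caveat you rightly flag that the constants are uniform only for $t\|g\|$ in a bounded range, which is all that is needed in the compact-manifold, line-search setting of Algorithm \ref{alg:retraction}.
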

	
The above proposition states that the retraction curve is approximately close to a line in Euclidean space. It was proved as a byproduct of Lemma 3 in \cite{RM_glo} and was also adopted by \cite{Jiang-SVRG-RM-2017}. Let the augmented Lagrangian function be defined by \eqref{Lagrangian} (without the $r_i(x_i)$ terms) and denote
$$\grad_{x_i} \mathcal{L}_{\beta}(x_1,\cdots,x_N,\lambda) = \Proj_{\mathcal{T}_{x_i}\mathcal{M}_i}\big\{ \nabla_i\mathcal{L}_{\beta}(x_1,\cdots,x_N,\lambda)\big\}$$
as the Riemannian partial gradient. We present the algorithm as in Algorithm \ref{alg:retraction}.

\begin{algorithm2e}[H]
	\caption{A feasible curvilinear line-search-based ADMM}
	\label{alg:retraction}
	Given $(x_1^0,\cdots,x_{N-1}^0,x_N^0)\in\mathcal{M}_1 \times\cdots \times\mathcal{M}_{N-1}\times\RR^{n_N}$, $\lambda^0\in \RR^m$, $\beta,\gamma,\sigma>0, s>0$ , $\alpha\in(0,1)$. \\
	\For{$k = 0,1,...$ }{
		$[\mbox{Step 1}]$ \For {$i = 1,2,...,N-1$}{
			Compute $g_i^k = \grad_{x_i} \mathcal{L}_{\beta}(x_1^{k+1},\cdots,x_{i-1}^{k+1},x_{i}^{k},\cdots,x_N^k,\lambda^k)$; \\
			Initialize with $t_i^k = s$. While
			\bea
			& & \mathcal{L}_{\beta}(x_1^{k+1},\cdots,x_{i-1}^{k+1}, R_i(x_i^k,-t_i^kg_i^k),x_{i+1}^{k},\cdots,x_N^k,\lambda^k)\nonumber\\
			& > & \mathcal{L}_{\beta}(x_1^{k+1},\cdots,x_{i-1}^{k+1},x_{i}^{k},\cdots,x_N^k,\lambda^k) - \frac{\sigma}{2}(t_i^k)^2\|g_i^k\|^2, \nonumber
			\eea
			shrink $t_i^k$ by $t_i^k \leftarrow \alpha t_i^k$;\\
			Set $x_i^{k+1} = R_i(x_i^k,-t_i^kg_i^k);$
		}
		$[\mbox{Step 2}]$ $x_{N}^{k+1} := x_N^k-\gamma\nabla_N\mathcal{L}_{\beta}(x_1^{k+1},\cdots,x_{N-1}^{k+1},x_N^k,\lambda^k)$; \\
		$[\mbox{Step 3}]$ $\lambda^{k+1} := \lambda^k-\beta(\sum_{i = 1}^{N}A_ix_i^{k+1}-b)$.
	}
\end{algorithm2e}

For Steps 2 and 3, Lemma \ref{lm:PADMM-lemma1} and Lemma \ref{lm:PADMM-lemma3} still hold. Further using Proposition \ref{assumption-6-retration}, Lemma~\ref{lm:PADMM-lemma1} becomes
\begin{lemma}
	\label{lm:retration-lemma1}
	Suppose that the sequence $\{x_1^k,...,\xk_N,\lambda^k\}$ is generated by Algorithm \ref{alg:retraction}. Then,
	\bea
	\|\lambda^{k+1}-\lambda^k\|^2 & \leq & 3\left(\beta-\frac{1}{\gamma}\right)^2\|t_N^kg_N^k\|^2+3\left[\left(\beta-\frac{1}{\gamma}\right)^2+L^2\right]\|t_N^{k-1}g_N^{k-1}\|^2 \nonumber \\
	& & +3L^2L_1^2\sum_{i=1}^{N-1}\|t_i^kg_i^k\|^2,
	\eea
\end{lemma}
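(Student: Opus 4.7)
The plan is to follow the same argument used to prove Lemma \ref{lm:PADMM-lemma1}, since Steps 2 and 3 of Algorithm \ref{alg:retraction} are identical to those of Algorithm \ref{alg:PADMM}. In particular, the identity \eqref{To-hard-to-give-a-name-TAT},
$$\lambda^{k+1} = \left(\beta - \frac{1}{\gamma}\right)(x_N^k - x_N^{k+1}) + \nabla_N f(x_1^{k+1},\ldots,x_{N-1}^{k+1},x_N^k),$$
continues to hold verbatim and will serve as the starting point.

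First I would apply this identity at both iterations $k+1$ and $k$ and subtract, writing $\lambda^{k+1} - \lambda^k$ as the sum of three pieces: $(\beta - 1/\gamma)(x_N^k - x_N^{k+1})$, $-(\beta - 1/\gamma)(x_N^{k-1} - x_N^k)$, and a gradient-difference term $\nabla_N f(x_1^{k+1},\ldots,x_{N-1}^{k+1},x_N^k) - \nabla_N f(x_1^{k},\ldots,x_{N-1}^{k},x_N^{k-1})$. Using $(a+b+c)^2 \leq 3(a^2+b^2+c^2)$ together with the $L$-Lipschitz continuity of $\nabla f$ from Assumption \ref{assumption-2-Lips} would reproduce exactly the Lemma \ref{lm:PADMM-lemma1} bound, but phrased in the form
$$\|\lambda^{k+1}-\lambda^k\|^2 \leq 3\left(\beta-\tfrac{1}{\gamma}\right)^2\|x_N^k-x_N^{k+1}\|^2 + 3\left[\left(\beta-\tfrac{1}{\gamma}\right)^2 + L^2\right]\|x_N^{k-1}-x_N^k\|^2 + 3L^2\sum_{i=1}^{N-1}\|x_i^k-x_i^{k+1}\|^2.$$

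The only new ingredient is to convert $\|x_i^{k+1} - x_i^k\|$ for $i = 1,\ldots,N-1$ into the retraction quantities $t_i^k g_i^k$. By construction, $x_i^{k+1} = R_i(x_i^k, -t_i^k g_i^k)$; setting $Y_i(s) := R_i(x_i^k, -s g_i^k)$ yields $Y_i(0) = x_i^k$ and $\|Y_i'(0)\| = \|g_i^k\|$, so Proposition \ref{assumption-6-retration} gives $\|x_i^{k+1} - x_i^k\| \leq L_1 t_i^k \|g_i^k\|$. Substituting this into the displayed inequality produces the $3L^2 L_1^2\sum_{i=1}^{N-1}\|t_i^k g_i^k\|^2$ term. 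The $x_N$-block differences require no modification: reading $t_N^k g_N^k$ as $\gamma\,\nabla_N \mathcal{L}_\beta(x_1^{k+1},\ldots,x_{N-1}^{k+1},x_N^k,\lambda^k)$, which by Step 2 equals $x_N^k - x_N^{k+1}$, these terms match the claimed form exactly.

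I do not expect any serious obstacle here, since the entire argument is a direct reuse of the Lemma \ref{lm:PADMM-lemma1} proof plus a single invocation of the retraction estimate. The only mild point to be careful about is that Proposition \ref{assumption-6-retration} delivers a linear-in-$t$ control on $\|Y_i(t)-Y_i(0)\|$ (not merely a quadratic remainder), which is precisely what is needed to preserve the $L^2$ Lipschitz constant and to pick up only the extra factor $L_1^2$; the finer second-order bound in the same proposition will instead be the relevant estimate in the subsequent descent lemmas.
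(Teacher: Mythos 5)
Your proposal is correct and follows exactly the route the paper intends: the paper states that Lemma \ref{lm:retration-lemma1} is just Lemma \ref{lm:PADMM-lemma1} combined with Proposition \ref{assumption-6-retration}, and your argument reproduces the three-term decomposition of $\lambda^{k+1}-\lambda^k$ from \eqref{To-hard-to-give-a-name-TAT} and then converts $\|x_i^{k+1}-x_i^k\|\leq L_1\|t_i^k g_i^k\|$ via the first retraction estimate, with the $N$-th block handled by the paper's convention $x_N^{k+1}=x_N^k+t_N^k g_N^k$. No gaps.
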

where we define $t_N^k = \gamma$ and $\xke_N = \xk_N+t_N^kg_N^k, \forall k \geq0,$ for simplicity. Moreover, for the definition of $\Psi_G$ in \eqref{Decrease_func}, Lemma \ref{lm:PADMM-lemma2} remains true, whereas the amount of decrease becomes
\bea
\label{decrease-retraction}
& & \Psi_G(\xke_1,\cdots,\xke_{N-1},\xke_N,\lambda^{k+1},\xk_N) - \Psi_G(\xk_1,\cdots,\xk_{N-1},\xk_N,\lambda^{k},\xkm_N) \\
&\leq& \left[\frac{\beta+L}{2}-\frac{1}{\gamma}+\frac{6}{\beta}\left(\beta-\frac{1}{\gamma}\right)^2+\frac{3L^2}{\beta}\right]\|t_N^kg_N^k\|^2 - \sum_{i = 1}^{N-1}\left(\frac{\sigma}{2}-\frac{3}{\beta}L^2L_1^2\right)\|t_i^kg_i^k\|^2< 0.\nonumber
\eea
Now we are in a position to present the iteration complexity result, where the detailed proof can be found in the appendix.
\begin{theorem}\label{thm:retraction}
	Suppose that the sequence $\{\xk_1,...,\xk_N,\lambda^k\}$ is generated by Algorithm \ref{alg:retraction}, and the parameters $\beta$ and $\gamma$ satisfy \eqref{beta} and \eqref{gamma} respectively. Denote $A_{\max} = \max_{1\leq j\leq N}\|A_j\|_2$. Define \\$\tau:= \min\left\{ -\left[\frac{\beta+L}{2}-\frac{1}{\gamma}+\frac{6}{\beta}\left(\beta-\frac{1}{\gamma}\right)^2+\frac{3L^2}{\beta}\right], \frac{\sigma}{2}-\frac{3}{\beta}L^2L_1^2 \right\} $, $\kappa_1 := \frac{3}{\beta^2}\left[\left(\beta-\frac{1}{\gamma}\right)^2+L^2\cdot\max\{L_1^2,1\}\right]$, $\kappa_2 := \left(|\beta-\frac{1}{\gamma}|+L\right)^2$, $\kappa_3 := \left((L+\sqrt{N}\beta A_{\max}^2)\cdot\max\{L_1,1\} +\frac{\sigma+2L_2C+(L+\beta A_{\max}^2)L_1^2}{2\alpha} + \beta A_{\max}\sqrt{\kappa_1}\right)^2$, where $C>0$ is a constant that depends only on the first iterate and the initial point. Assume $\sigma>\max\{\frac{6}{\beta}L^2L_1^2,\frac{2\alpha}{s}\}$. Define
	\be\label{def:K}
	K := \left\lceil\frac{3\max\{\kappa_1,\kappa_2,\kappa_3\}}{\tau\epsilon^2}
	\left(\Psi_G(x_1^1,...,x_N^1,\lambda^1,x_N^0)-f^*\right)\right\rceil,
	\ee
	and $k^* := \argmin_{2\leq k\leq K+1}\sum_{i=1}^N(\|t_i^{k+1}g_i^{k+1}\|^2+\|t_i^kg_i^k\|^2+\|t_i^{k-1}g_i^{k-1}\|^2).$ Then $(x_1^{k^*+1},\cdots,x_N^{k^*+1},\lambda^{k^*+1})$ is an $\epsilon$-stationary solution of \eqref{prob:main}.
\end{theorem}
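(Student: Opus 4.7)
The plan is to follow the blueprint of Theorem~\ref{thm:PADMM} and then accommodate the two complications introduced by retraction: the first-order subproblem optimality no longer comes for free (it must be recovered from the Armijo condition), and the Riemannian gradient $g_i^k$ is evaluated at a \emph{partial} iterate rather than at $(x^{k+1},\lambda^{k+1})$. First I would sum the descent inequality \eqref{decrease-retraction} over $k=1,\ldots,K$, invoke Lemma~\ref{lm:PADMM-lemma3} for the lower bound, and extract by pigeonhole an index $k^{*}$ with $\tilde{\theta}_{k^*}:=\sum_{i=1}^{N}\bigl(\|t_i^{k^*+1}g_i^{k^*+1}\|^2+\|t_i^{k^*}g_i^{k^*}\|^2+\|t_i^{k^*-1}g_i^{k^*-1}\|^2\bigr)=O\bigl(1/(\tau K)\bigr)$, using the convention $t_N^k=\gamma$ and $g_N^k=\nabla_N\mathcal{L}_\beta$. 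Because Steps~2 and 3 coincide with Algorithm~\ref{alg:PADMM}, identity \eqref{To-hard-to-give-a-name-TAT} and the feasibility formula $\|\sum_i A_i x_i^{k+1}-b\|^2=\|\lambda^{k+1}-\lambda^k\|^2/\beta^2$ still apply, giving directly $\|\lambda^{k^*+1}-\nabla_N f(x^{k^*+1})\|^2\leq\kappa_2\tilde{\theta}_{k^*}$ and, via Lemma~\ref{lm:retration-lemma1}, $\|\sum_i A_i x_i^{k^*+1}-b\|^2\leq\kappa_1\tilde{\theta}_{k^*}$ (the factor $\max\{L_1^2,1\}$ in $\kappa_1$ accounting for the retraction distortion of the manifold blocks relative to the Euclidean block).

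The hard part is the manifold blocks. I would first produce a uniform lower bound $t_i^k\geq t_{\min}>0$ on the backtracking step size. Using Proposition~\ref{assumption-6-retration} to approximate $R_i(x_i^k,-tg_i^k)=x_i^k-tg_i^k+e(t)$ with $\|e(t)\|\leq L_2 t^2\|g_i^k\|^2$, together with block-smoothness of $\mathcal{L}_\beta$ of modulus at most $L+\beta A_{\max}^2$, a routine descent-lemma computation shows the Armijo test in Step~1 holds whenever $t\leq 2/\bigl(\sigma+2L_2\|\nabla_i\mathcal{L}_\beta\|+(L+\beta A_{\max}^2)L_1^2\bigr)$; backtracking then forces $t_i^k\geq\min\bigl\{s,\,2\alpha/[\sigma+2L_2 C+(L+\beta A_{\max}^2)L_1^2]\bigr\}=:t_{\min}$, where $C$ uniformly upper bounds $\|\nabla_i\mathcal{L}_\beta\|$ on the sublevel set $\{\Psi_G\leq\Psi_G(x_1^1,\ldots,x_N^1,\lambda^1,x_N^0)\}$. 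Such a $C$ exists because each $\cM_i$ is compact, $x_N^k$ lies in a bounded set (its displacements are square-summable by the descent), and $\|\lambda^k\|$ is controlled through \eqref{To-hard-to-give-a-name-TAT}; the assumption $\sigma>2\alpha/s$ is there so that the two terms in the $\min$ are consistent and at least one full backtracking step is meaningful. This yields $\|g_i^k\|^2\leq\tilde{\theta}_k/t_{\min}^2$.

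With the step sizes bounded below, I would finally compare the target projected gradient at $(x^{k^*+1},\lambda^{k^*+1})$ to $g_i^{k^*}$ at the partial iterate. Adding and subtracting inside the Riemannian projection, the error decomposes into: (i)~Lipschitz change of $\nabla f$ between the partial and the full iterate, of order $\sqrt{\sum_{j\geq i+1}\|x_j^{k^*+1}-x_j^{k^*}\|^2}$, handed back to $\sqrt{\tilde{\theta}_{k^*}}$ via Proposition~\ref{assumption-6-retration}; (ii)~the change in the penalty gradient $\beta A_i^\T\sum_{j>i}A_j(x_j^{k^*+1}-x_j^{k^*})$, contributing the $\beta\sqrt{N}A_{\max}^2$ factor in $\kappa_3$; (iii)~the multiplier update $A_i^\T(\lambda^{k^*+1}-\lambda^{k^*})=\beta A_i^\T(\sum_j A_j x_j^{k^*+1}-b)$, contributing the $\beta A_{\max}\sqrt{\kappa_1}$ factor; and (iv)~the change of projection from $\cT_{x_i^{k^*}}\cM_i$ to $\cT_{x_i^{k^*+1}}\cM_i$, which by Proposition~\ref{assumption-6-retration} is bounded by an $L_2\|t_i^{k^*}g_i^{k^*}\|^2$ term that produces the $2L_2C$ contribution when applied to $\nabla_i f-A_i^\T\lambda$ (bounded by $C$). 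Combining all four pieces yields $\|\Proj_{\cT_{x_i^{k^*+1}}\cM_i}(\nabla_i f(x^{k^*+1})-A_i^\T\lambda^{k^*+1})\|^2\leq\kappa_3\tilde{\theta}_{k^*}$, and the choice of $K$ in \eqref{def:K} forces all three $\epsilon$-stationarity quantities to be simultaneously at most $\epsilon$, completing the proof.
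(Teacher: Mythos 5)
Your overall architecture --- telescoping \eqref{decrease-retraction} against Lemma \ref{lm:PADMM-lemma3}, reusing \eqref{To-hard-to-give-a-name-TAT} and Lemma \ref{lm:retration-lemma1} for the $\kappa_2$ and $\kappa_1$ bounds, and deriving the uniform step-size lower bound $t_i^k\ge 2\alpha/(\sigma+2L_2C+(L+\beta A_{\max}^2)L_1^2)$ from the Armijo test combined with Proposition \ref{assumption-6-retration} --- matches the paper's proof. The gap is in your item (iv). You certify stationarity at $x^{k^*+1}$ by comparing the target $\Proj_{\cT_{x_i^{k^*+1}}\cM_i}\bigl(\nabla_i f(x^{k^*+1})-A_i^\T\lambda^{k^*+1}\bigr)$ with $g_i^{k^*}$, which lives in $\cT_{x_i^{k^*}}\cM_i$, and you therefore need to control $\|\Proj_{\cT_{x_i^{k^*+1}}\cM_i}w-\Proj_{\cT_{x_i^{k^*}}\cM_i}w\|$. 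Proposition \ref{assumption-6-retration} does not give this: it bounds the retraction curve $Y_i(t)$ and its deviation from the line $Y_i(0)+tY_i'(0)$, but says nothing about how the tangent-space projector varies with the base point. Justifying (iv) would require an additional Lipschitz-type hypothesis on $x\mapsto\Proj_{\cT_x\cM_i}$ that the paper never assumes. The paper avoids the issue entirely by comparing with $g_i^{k^*+1}$, the Riemannian partial gradient formed at the start of iteration $k^*+1$: it is by definition already the projection onto $\cT_{x_i^{k^*+1}}\cM_i$ of $\nabla_i\cL_\beta$ at the partial iterate $(x_1^{k^*+2},\ldots,x_{i-1}^{k^*+2},x_i^{k^*+1},\ldots,x_N^{k^*+1},\lambda^{k^*+1})$, so the residual decomposes into only the gradient-Lipschitz, penalty, feasibility and $\|g_i^{k^*+1}\|$ pieces, all controlled by $\|t_j^{k^*+1}g_j^{k^*+1}\|$ and $\|t_j^{k^*}g_j^{k^*}\|$. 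This is precisely why $k^*$ minimizes the three-term sum including $\|t_i^{k+1}g_i^{k+1}\|^2$ (which your argument never uses) and why $K$ carries the factor $3$ rather than the factor $2$ of Theorem \ref{thm:PADMM}.

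Two further accounting problems. First, the $2L_2C$ in $\kappa_3$ is not a change-of-projection contribution: it sits inside the summand $\frac{\sigma+2L_2C+(L+\beta A_{\max}^2)L_1^2}{2\alpha}=1/t_{\min}$, i.e., it enters only through $\|g_i^{k^*+1}\|\le\sqrt{\tilde\theta_{k^*}}/t_{\min}$, so your extra term (iv) has no room in the stated $\kappa_3$ and would inflate the constant even if it could be justified. Second, square-summability of the displacements does not imply that $\{x_N^k\}$ is bounded; the boundedness needed for the existence of $C$ comes instead from the feasibility bound $\|\sum_iA_ix_i^{k+1}-b\|\le\sqrt{\kappa_1\theta_k}$, whose right-hand side is uniformly bounded by the initial potential gap, combined with compactness of $\cM_1,\ldots,\cM_{N-1}$; the bound on $\|\lambda^k\|$ then follows via \eqref{To-hard-to-give-a-name-TAT} as you say.
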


\section{ Extending the Basic Model }\label{sec:Extension}
Recall that for our basic model \eqref{prob:main}, a number of assumptions have been made; e.g.\ we assumed that $r_i,i=1,...,N-1$ are convex, $x_N$ is unconstrained and $A_N = I$. In this section we shall extend the model to relax these assumptions. We shall also extend our basic algorithmic model from the Gauss-Seidel updating style to allow the Jacobi style updating, to enable parallelization.

\subsection{Relaxing the convexity requirement on nonsmooth regularizers}

For problem \eqref{prob:main} the nonsmooth part $r_i$ are actually not necessarily convex. As an example, nonconvex and nonsmooth regularizations such as $\ell_q$ regularization with $0<q<1$ are very common in compressive sensing.
To accommodate the change, the following adaptation is needed.

\begin{proposition}
For problem \eqref{prob:main}, where $f$ is smooth with Lipchitz continuous gradient. Suppose that $\cI_1,\cI_2$ form a partition of the index set $\{1,...,N-1\}$, in such a way that for $i\in\cI_1$, $r_i$'s are nonsmooth but convex, and for $i\in\cI_2$, $r_i$'s are nonsmooth and nonconvex but are locally Lipschitz continuous. If for blocks $x_i,i\in\cI_2$ there are no manifold constraints, i.e.\ $\cM_i = \RR^{n_i},i\in\cI_2$, then Theorems \ref{thm:PADMM}, \ref{thm:PADMM-L} and \ref{thm:PADMM-S} remain true.
\end{proposition}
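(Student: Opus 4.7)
The plan is to revisit each step of the convergence analysis behind Theorems~\ref{thm:PADMM}, \ref{thm:PADMM-L} and \ref{thm:PADMM-S}, and to verify that the convexity of $r_i$ was used \emph{only} for two purposes: (a) to apply the Riemannian subdifferential-projection identity \eqref{proj_sub} in the derivation of the $\epsilon$-stationarity measure, and (b) nowhere else. Once this is confirmed, replacing $\cM_i$ by $\R^{n_i}$ for $i\in\cI_2$ eliminates the need for \eqref{proj_sub} on those blocks and the standard Clarke calculus \eqref{Clarke:f+r} takes its place.

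First I would re-examine the descent lemmas (Lemmas \ref{lm:PADMM-lemma2}, \ref{lm:PADMM-L-lemma2}, \ref{lm:PADMM-S-lemma2}). Their proofs rely only on: (i) $x_i^{k+1}$ being a \emph{global} minimizer of the subproblem at Step 1 (guaranteed by Assumption \ref{assumption-3-subP_globalsolu}, which is a computational hypothesis, not a convexity hypothesis on $r_i$), (ii) the Lipschitz gradient of $f$, and (iii) properties of Steps 2--3 which do not involve $r_i$ at all. The inequality one actually uses is simply the defining optimality of $x_i^{k+1}$ in the subproblem tested against the point $x_i^{k}$, namely
\[
\mathcal{L}_\beta(\ldots,x_i^{k+1},\ldots)+\tfrac{1}{2}\|x_i^{k+1}-x_i^k\|_{H_i}^2\leq \mathcal{L}_\beta(\ldots,x_i^{k},\ldots),
\]
which holds verbatim regardless of whether $r_i$ is convex. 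So the potential-function descent argument goes through for both $i\in\cI_1$ and $i\in\cI_2$.

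Next I would re-derive the subproblem first-order optimality condition \eqref{subPopt} block by block. For $i\in\cI_1$ we retain the original derivation unchanged. For $i\in\cI_2$, since $\cM_i=\R^{n_i}$, we have $\cT_{x_i^{k+1}}\cM_i=\R^{n_i}$ and $\Proj_{\cT_{x_i^{k+1}}\cM_i}$ is the identity. Here the Clarke sum rule \eqref{Clarke:f+r} (valid because $f$ is smooth and $r_i$ is locally Lipschitz, with no convexity required) yields
\[
0\in \nabla_i f(\ldots,x_i^{k+1},\ldots)-A_i^\T\lambda^{k+1}+\beta A_i^\T\!\!\sum_{j>i}A_j(x_j^k-x_j^{k+1})+H_i(x_i^{k+1}-x_i^k)+\partial r_i(x_i^{k+1})+\cN_{X_i}(x_i^{k+1}),
\]
which is exactly the analogue of \eqref{subPopt} that is needed to bound the stationarity residual. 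The remaining estimates \eqref{ConsVio}, \eqref{lambda} and \eqref{otherblock} depend only on Lipschitz continuity of $\nabla f$, the update rules of Steps 2--3, and the already-proved sub-optimality inequality; none of them invoke convexity of $r_i$.

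Putting the pieces together, the potential function still decreases by at least $\tau\theta_k$ per iteration and is bounded below by the same quantity as in Lemma \ref{lm:PADMM-lemma3} (since $r_i^*$ is well defined for locally Lipschitz $r_i$ on the feasible set by Assumption \ref{assumption-1-Lbounds}). The averaging-over-$K$ argument in the proof of Theorem \ref{thm:PADMM} then yields the same $O(1/\epsilon^2)$ bound, and identical modifications handle Theorems \ref{thm:PADMM-L} and \ref{thm:PADMM-S}. The only genuinely delicate point, and the one I would flag as the main obstacle, is the structural restriction $\cM_i=\R^{n_i}$ for $i\in\cI_2$: if a block carries both a nonconvex $r_i$ and a nontrivial manifold constraint, the projection identity \eqref{proj_sub} fails in general and the Clarke subdifferential of $r_i+\delta_{\cM_i}$ need not split, so one cannot bound the projected subgradient residual in the same clean way. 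Fortunately, the hypothesis of the proposition precisely rules out this case, so the extension is legitimate.
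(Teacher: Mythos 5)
Your proposal is correct and follows essentially the same route as the paper: the paper likewise observes that convexity of $r_i$ enters only through \eqref{proj_sub} in the derivations of \eqref{subPopt} and \eqref{otherblock}, and that when $\cM_i=\RR^{n_i}$ the projection becomes the identity so the Clarke sum rule \eqref{Clarke:f+r} substitutes for it, leaving the descent and lower-bound lemmas untouched. Your explicit verification that the descent lemmas use only the global optimality of the Step 1 subproblems, and your remark on why a block with both a nonconvex $r_i$ and a nontrivial manifold constraint cannot be handled this way, are consistent with (and slightly more detailed than) the paper's own argument.
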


Recall that in the proofs for \eqref{subPopt} and \eqref{otherblock}, we required the convexity of $r_i$ to ensure \eqref{proj_sub}. However, if $\cM_i = \RR^{n_i}$, then we directly have \eqref{Clarke:f+r}, i.e., $\partial_i(f+r_i) = \nabla_if+\partial r_i$
instead of \eqref{proj_sub}. The only difference is that $\partial r_i$ becomes the Clarke generalized subdifferential instead of the convex subgradient and the projection operator is no longer needed. In the subsequent complexity analysis, we just need to remove all the projection operators in \eqref{otherblock} and \eqref{sto:otherblock}. Hence the same convergence result follows.

Moreover, if for some blocks, $r_i$'s are nonsmooth and nonconvex, while the constraint $x_i\in\cM_i\neq\RR^{n_i}$ is still imposed, then we can solve the problem via the following equivalent formulation:
\bea
\label{prob:reformulate:Extn-1}
& \min & f(x_1,...,x_N) + \sum_{i \in \cI_1\cup\cI_2} r_i(x_i)+ \sum_{i \in \cI_3} r_i(y_i)\nonumber\\
& \st & \sum_{i = 1}^{N} A_ix_i = b, \mbox{ with } A_N = I,  \nonumber \\
& & x_N\in \RR^{n_N},\\
& & x_i \in \mathcal{M}_i\cap X_i, ~~ i\in\cI_1\cup\cI_3, \nonumber \\
& & x_i \in X_i, ~~ i \in\cI_2, \nonumber\\
& & y_i = x_i, ~~ i \in\cI_3, \nonumber
\eea
where $\cI_1,\cI_2$ and $\cI_3$ form a partition for $\{1,...,N-1\}$, with $r_i$ convex for $i\in\cI_1$ and nonconvex but locally Lipschitz continuous for $i\in\cI_2\cup\cI_3$. The difference is that $x_i$ is not required to satisfy Riemannian manifold constraint for $i\in\cI_2$. 

Unfortunately, the $\ell_q$ regularization itself is not locally Lipschitz at $0$ and hence does not satisfy our requirement. But if we apply the modification of $\ell_q$ regularization in Remark \ref{rm:Lq}, then we can circumvent this difficulty while making almost no change to the solution process and keeping closed form solutions. In fact, due to the limited machine precision of computer, we can directly use $\ell_q$ regularization and treat it as if we were working with the modified $\ell_q$ regularization.

\subsection{Relaxing the condition on the last block variables}
In the previous discussion, we limit our problem to the case where $A_N = I$ and $x_N$ is unconstrained. Actually, for the  general case
\begin{eqnarray}
\label{prob:general-ADMM}
& \min & f(x_1,\cdots,x_N) + \sum_{i = 1}^{N} r_i(x_i) \nonumber\\
& \st & \sum_{i = 1}^{N} A_ix_i = b,  \\
& & x_i \in \mathcal{M}_i\cap X_i, ~~ i = 1,...,N, \nonumber
\end{eqnarray}
where $x_N$ is as normal as other blocks, we can actually add an additional block $x_{N+1}$ and modify the objective a little bit and arrive at the modified problem
\begin{eqnarray}
\label{prob:modified-ADMM}
& \min & f(x_1,\cdots,x_N,x_{N+1}) + \sum_{i = 1}^{N} r_i(x_i) + \frac{\mu}{2}\|x_{{N+1}}\|^2 \nonumber\\
& \st & \sum_{i = 1}^{N} A_ix_i +x_{N+1}= b, \nonumber \\
& & x_{N+1}\in \RR^m, \\
& & x_i \in \mathcal{M}_i\cap X_i, ~~ i = 1,...,N. \nonumber
\end{eqnarray}
Following a similar line of proofs of Theorem 4.1 in \cite{NcvxADMM:Zhang-etal-2016}, we have the following proposition.
\begin{proposition} Consider the modified problem \eqref{prob:modified-ADMM} with $\mu = 1/\epsilon$ for some given tolerance $\epsilon\in(0,1)$ and suppose the sequence $\{(x_1^k,...,x_{N+1}^k,\lambda^k)\}$ is generated by Algorithm \ref{alg:PADMM} (resp.\ Algorithm \ref{alg:PADMM-L}). Let  $(x_1^{k*+1},...,x_{N}^{k*+1},\lambda^{k*+1})$ be $\epsilon$-stationary solution of \eqref{prob:modified-ADMM} as defined in Theorem \ref{thm:PADMM} (resp.\ Theorem \ref{thm:PADMM-L}). Then  $(x_1^{k*+1},...,x_{N}^{k*+1},\lambda^{k*+1})$ is an $\epsilon$-stationary point of the original problem \eqref{prob:general-ADMM}. 
\end{proposition}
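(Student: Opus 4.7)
\proof
The plan is to apply Algorithm \ref{alg:PADMM} (or \ref{alg:PADMM-L}) directly to the reformulated problem \eqref{prob:modified-ADMM} and then translate the resulting approximate stationarity back to the original problem \eqref{prob:general-ADMM}. First I observe that \eqref{prob:modified-ADMM} fits exactly into the template of \eqref{prob:main}: the additional block $x_{N+1}$ is unconstrained in $\R^m$, its coefficient matrix is $I$, and the quadratic penalty $\frac{\mu}{2}\|x_{N+1}\|^2$ can be absorbed into a new smooth objective $\tilde{f}:=f+\frac{\mu}{2}\|x_{N+1}\|^2$ whose gradient is Lipschitz (with constant $\max\{L,\mu\}$). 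Hence Theorem \ref{thm:PADMM} (resp.\ Theorem \ref{thm:PADMM-L}) is directly applicable and produces $(x_1^{k^*+1},\ldots,x_{N+1}^{k^*+1},\lambda^{k^*+1})$ that is $\epsilon$-stationary for \eqref{prob:modified-ADMM}.

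Next I would unpack the three components of Definition \ref{def:epsolu} at this point. The stationarity condition for the last (unconstrained) block $x_{N+1}$ reads $\|\mu x_{N+1}^{k^*+1}-\lambda^{k^*+1}\|\leq\epsilon$ because $\nabla_{N+1}\tilde f=\mu x_{N+1}$ and $A_{N+1}=I$. This immediately gives the key estimate
\be
\|x_{N+1}^{k^*+1}\|\leq\frac{\epsilon+\|\lambda^{k^*+1}\|}{\mu}=\epsilon\bigl(\epsilon+\|\lambda^{k^*+1}\|\bigr). \nonumber
\ee
The stationarity conditions for blocks $i=1,\ldots,N$ are identical in form to those required for the original problem \eqref{prob:general-ADMM}, since the extra term $\frac{\mu}{2}\|x_{N+1}\|^2$ depends only on $x_{N+1}$. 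So the manifold-stationarity part needs no further work.

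The main obstacle is then to bound $\|\lambda^{k^*+1}\|$ by a constant independent of $\epsilon$ (equivalently, independent of $\mu$). I would do this by exploiting the identity \eqref{To-hard-to-give-a-name-TAT} adapted to the last block of \eqref{prob:modified-ADMM}:
\be
\lambda^{k+1}=\Bigl(\beta-\tfrac{1}{\gamma}\Bigr)(x_{N+1}^k-x_{N+1}^{k+1})+\mu x_{N+1}^{k+1}. \nonumber
\ee
Using the monotone-descent of the potential $\Psi_G$ and its uniform lower bound from Lemmas \ref{lm:PADMM-lemma2} and \ref{lm:PADMM-lemma3}, the successive differences $\|x_{N+1}^k-x_{N+1}^{k+1}\|$ stay controlled by $\Psi_G(x^1,\lambda^1,x_N^0)-\sum r_i^*-f^*$; the coercivity coming from the term $\tfrac{\mu}{2}\|x_{N+1}\|^2$ in the augmented Lagrangian, together with the boundedness that Algorithm \ref{alg:PADMM} enforces on the manifold-constrained blocks, forces $\mu x_{N+1}^{k+1}$ to remain bounded by a constant $M$ independent of $\mu$. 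Here I would essentially mirror the coercivity argument used for Theorem 4.1 in \cite{NcvxADMM:Zhang-etal-2016}, which is precisely what the proposition alludes to.

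Finally I would combine the pieces. With $\|\lambda^{k^*+1}\|\leq M$ and $\mu=1/\epsilon$, the estimate above yields $\|x_{N+1}^{k^*+1}\|\leq\epsilon(\epsilon+M)=\cO(\epsilon)$. Then the constraint violation for the original problem satisfies
\be
\Bigl\|\sum_{i=1}^N A_i x_i^{k^*+1}-b\Bigr\|\leq\Bigl\|\sum_{i=1}^N A_i x_i^{k^*+1}+x_{N+1}^{k^*+1}-b\Bigr\|+\|x_{N+1}^{k^*+1}\|\leq\epsilon+\epsilon(\epsilon+M)=\cO(\epsilon), \nonumber
\ee
while the manifold-stationarity conditions carry over verbatim. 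After absorbing the constant $1+M$ into the accuracy parameter (or running the algorithm on \eqref{prob:modified-ADMM} with tolerance $\epsilon/(1+M)$), one obtains an $\epsilon$-stationary solution of \eqref{prob:general-ADMM} as claimed. The only delicate step is the uniform bound on $\|\lambda^{k^*+1}\|$; everything else amounts to triangle inequalities and rereading the first-order conditions.
\endproof
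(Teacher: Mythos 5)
Your overall skeleton is the intended one (the paper itself gives no argument beyond deferring to Theorem 4.1 of \cite{NcvxADMM:Zhang-etal-2016}): apply Theorem \ref{thm:PADMM} to \eqref{prob:modified-ADMM}, observe that the block conditions for $i=1,\dots,N$ transfer verbatim, read the last-block condition as $\|\mu x_{N+1}^{k^*+1}-\lambda^{k^*+1}\|\leq\epsilon$, and convert the feasibility of the augmented system into feasibility of the original one by a triangle inequality. You also correctly isolate the crux: everything reduces to showing $\|\lambda^{k^*+1}\|\leq M$ with $M$ independent of $\mu=1/\epsilon$.

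The gap is that the argument you offer for this crux does not deliver it. The coercivity of the term $\frac{\mu}{2}\|x_{N+1}\|^2$ inside $\Psi_G$, combined with the monotone descent of Lemma \ref{lm:PADMM-lemma2} and the lower bound of Lemma \ref{lm:PADMM-lemma3}, yields only $\frac{\mu}{2}\|x_{N+1}^k\|^2\leq \Psi_G^1-\sum_i r_i^*-f^*$, i.e.\ $\|x_{N+1}^k\|=O(\mu^{-1/2})$ and hence $\mu\|x_{N+1}^k\|=O(\mu^{1/2})$ --- one full order short of the $O(1)$ bound you need, since by \eqref{To-hard-to-give-a-name-TAT} one has $\lambda^{k+1}=(\beta-\frac{1}{\gamma})(x_{N+1}^k-x_{N+1}^{k+1})+\mu x_{N+1}^k$ and the first term is controlled but the second is exactly $\mu\|x_{N+1}^k\|$. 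Trying to close the loop through the stationarity conditions instead is circular ($\|\lambda^{k^*+1}\|\leq\epsilon+\mu\|x_{N+1}^{k^*+1}\|$ and $\mu\|x_{N+1}^{k^*+1}\|\leq\epsilon+\|\lambda^{k^*+1}\|$), and the block-$i$ conditions for $i\leq N$ only bound $\Proj_{\cT_{x_i}\cM_i}(A_i^\T\lambda)$ modulo normal cones, which controls $\|\lambda\|$ only under a constraint-qualification-type assumption that is not stated here. Two further points need tracking if you pursue this route: the quantity $\Psi_G^1-\sum_i r_i^*-f^*$ is itself $\mu$-dependent (since \eqref{beta} forces $\beta\gtrsim 2.86\max\{L,\mu\}$ and the first iterate depends on $\mu$), and your final rescaling "run with tolerance $\epsilon/(1+M)$" changes $\mu=1/\epsilon$ and therefore potentially $M$. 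So the proposal is the right reduction, but the one nontrivial estimate --- a bound on $\|\lambda^{k^*+1}\|$ (equivalently on $\mu\|x_{N+1}^{k^*+1}\|$) that is uniform in $\mu$ --- is asserted rather than proved, and the mechanism you invoke provably falls short by a factor of $\sqrt{\mu}$.
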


\begin{remark}
We remark here that when $\mu = 1/\epsilon$, the Lipschitz constant of the objective function $L$ also depends on $\epsilon$. As a result, the iteration complexity of Algorithms \ref{alg:PADMM} and \ref{alg:PADMM-L} becomes $O(1/\epsilon^4)$.
\end{remark}


\subsection{The Jacobi-style updating rule}
Parallel to \eqref{LagApprox}, we define a new linearized approximation of the augmented Lagrangian as
\bea
\label{LagApprox2}
\bar{\cL}_{\beta}^i(x_i;\hat{x}_1,\cdots,\hat{x}_N,\lambda) & = &\bar{f}_{\beta}(\hat{x}_1,\cdots,\hat{x}_N)+\langle \nabla_i\bar{f}_{\beta}(\hat{x}_1,\cdots,\hat{x}_N),x_i-\hat{x}_i\rangle \nonumber\\
& & -\bigg\langle \sum_{j\neq i}^{N}A_j \hat{x}_j + A_ix_i-b,\lambda\bigg\rangle +r_i(x_i),
\eea
where
$$\bar{f}_{\beta}(x_1,\cdots,x_N) = f(x_1,\cdots,x_N)+ \frac{\beta}{2}\bigg\|\sum_{j=1}^{N}A_jx_j-b\bigg\|^2.$$

Compared with \eqref{LagApprox}, in this case we linearize both the coupling smooth objective function and the augmented term.

In Step 1 of  Algorithm \ref{alg:PADMM-L}, we have the Gauss-Seidel style updating rule,
$$x_i^{k+1} = \argmin_{x_i\in \mathcal{M}_i\cap X_i }\hat{\mathcal{L}}^i_{\beta}(x_i;x_1^{k+1},\cdots,x_{i-1}^{k+1},x_i^k,\cdots,x_N^k,\lambda^k)+\frac{1}{2}\|x_i-x_i^k\|^2_{H_i}.$$
Now if we replace this with the Jacobi style updating rule,
\be\label{Jacobi-subproblems}
x_i^{k+1} = \argmin_{x_i\in \mathcal{M}_i\cap X_i }\bar{\mathcal{L}}^i_{\beta}(x_i;x_1^{k},\cdots,x_{i-1}^{k},x_i^k,\cdots,x_N^k,\lambda^k)+\frac{1}{2}\|x_i-x_i^k\|^2_{H_i},
\ee
then we end up with a new algorithm which updates all blocks parallelly instead of sequentially. When the number of blocks, namely $N$, is large, using the Jacobi updating rule can be beneficial because the computation can be parallelized.

To establish the convergence of this process, all we need is to establish a counterpart of \eqref{lm_:PADMM-L-lemma2:1} in this new setting, namely
\be
\label{Jacobi}
\cL_{\beta}(\xke_1,\cdots,\xke_{N-1},\xk_N,\lambda^k)\leq\cL_{\beta}(\xk_1,\cdots,\xk_N,\lambda^k)-\sum_{i=1}^{N-1}\|\xk_i-\xke_i\|^2_{\frac{H_i}{2}-\frac{\hat{L}}{2}I},
\ee
for some $\hat{L}>0.$ Consequently, if we choose
$H_i\succ\hat{L}I,$
then the convergence and complexity analysis goes through for Algorithm \ref{alg:PADMM-L}. Moreover, Algorithm \ref{alg:PADMM-S} can also be adapted to the Jacobi-style updates. The proof for \eqref{Jacobi} is given in the appendix.

\section{Some Applications and Their Implementations }
\label{sec:application}


The applications of block optimization with manifold constraints are abundant. In this section we shall present some typical examples. Our choices include the NP-hard maximum bisection problem, the sparse multilinear principal component analysis, and the community detection problem.

\subsection{ Maximum bisection problem } The maximum bisection problem is a variant of the well known NP-hard maximum cut problem. Suppose we have a graph $G = (V,E)$ where $V = \{1,...,n\} := [n]$ denotes the set of nodes and $E$ denotes the set of edges, each edge $e_{ij}\in E$ is assigned with a weight $W_{ij}\geq0$. For pair $(i,j)\notin E$, define $W_{ij} = 0$. Let a bisection $\{V_1,V_2\}$ of $V$ be defined as
$$
V_1\cup V_2 = V,\quad V_1\cap V_2 = \emptyset, \quad |V_1|=|V_2|.
$$
The maximum bisection problem is to find the best bisection that maximize the graph cut value:
\bea
&\max_{V_1,V_2} & \sum_{i \in V_1}\sum_{j \in V_2} W_{ij} \nonumber\\
& \text{s.t.} & V_1, V_2 \mbox{ is a bisection of } V. \nonumber
\eea
Note that if we relax the constraint $|V_1| = |V_2|$, that is, we only require $\{V_1,V_2\}$ to be a partition of $V$, then this problem becomes the \emph{maximum cut} problem. In this paper, we propose to solve this problem by our method and compare our results with the two SDP relaxations proposed in \cite{Max-bisec:Frieze-1997,Max-bisec:Ye-2011}.

First, we model the bisection $\{V_1,V_2\}$ by a binary assignment matrix $U\in\{0,1\}^{n\times2}$. Each node $i$ is represented by the $i$th row of matrix $U$. Denote this row by $u^\top_i$, where $u_i\in\{0,1\}^{2\times1}$ is a column vector with exactly one entry equal to 1. Then $u_i^\top = (1,0)$ or $(0,1)$ corresponds to $i\in V_1$ or $i\in V_2$ respectively, and the objective can be represented by
$$\sum_{i \in V_1}\sum_{j \in V_2} W_{ij} = \sum_{i,j}(1-\langle u_i,u_j\rangle)W_{i,j} = -\langle W, UU^\top \rangle + const.$$
The constraint that $|V_1| = |V_2|$ is characterized by the linear equality constraint $$\sum_{i=1}^n (u_i)_1 - \sum_{i=1}^n (u_i)_2 = 0.$$
Consequently, we can develop the nonconvex relaxation of the maximum bisection problem as
\bea
\label{prob:max-bisec-0}
& \min_{U} & \langle W,UU^\top\rangle \nonumber\\
& \text{s.t.} & \|u_i\|^2 = 1, u_i\geq 0, \mbox{ for } i = 1,...,n, \\
& & \sum_{i=1}^n (u_i)_1 - \sum_{i=1}^n (u_i)_2 = 0. \nonumber
\eea

After the relaxation is solved, each row is first rounded to an integer solution
\[
u_i \leftarrow \begin{cases}
(1,0)^\top, &\mbox{if } (u_i)_1\geq (u_i)_2,\\
(0,1)^\top, &\mbox{otherwise.}
\end{cases}
\]
Then a greedy algorithm is applied to adjust current solution to a feasible bisection solution. Note that this greedy step is necessary for our algorithm and the SDP relaxations in \cite{Max-bisec:Frieze-1997,Max-bisec:Ye-2011} to reach a feasible bisection.

The ADMM formulation of this problem will be shown in the numerical experiment part and the algorithm realization is omitted. Here we only need to mention that all the subproblems are of the following form:
\bea
\label{prob:shpere}
&\min_x & b^\T x \\
& \st & \|x\|^2=1, x\geq0. \nonumber
\eea

This nonconvex constrained problem can actually be solved to global optimality in closed form, see the Lemma 1 in \cite{CP-comm}. For the sake of completeness, we present the lemma bellow.
\begin{lemma}
	\label{prob:sub-closed-form}
	(Lemma 1 in \cite{CP-comm}.) Define $b^+ = \max\{b,0\}$, $b^- = -\min\{b,0\}$, where $\max$ and $\min$ are taken element-wise. Note that $b^+\geq 0$, $b^-\geq 0$, and $b = b^+-b^-$. The closed form solution for problem \eqref{prob:shpere} is
	\be x^* = \begin{cases} \frac{b^-}{\|b^-\|}, & \mbox{ if } b^-\neq0
		\\
		e_i,  & \mbox{ otherwise},
	\end{cases}\ee
	where $e_i$ is the $i$-th unit vector with $i = \argmin_j b_j$.
\end{lemma}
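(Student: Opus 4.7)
The plan is to split into two cases according to whether $b^-$ vanishes, and in each case to combine the decomposition $b = b^+ - b^-$ with a Cauchy--Schwarz-type bound. Throughout, $b^+$ and $b^-$ are entrywise nonnegative with disjoint supports by construction.

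For any feasible $x$ (so $x\geq 0$ and $\|x\|=1$), the decomposition immediately yields
$$
b^\T x \;=\; (b^+)^\T x - (b^-)^\T x \;\geq\; -(b^-)^\T x \;\geq\; -\|b^-\|\,\|x\| \;=\; -\|b^-\|,
$$
where the first inequality uses $b^+\geq 0$ and $x\geq 0$ and the second is Cauchy--Schwarz. In Case 1 ($b^-\neq 0$), I would set $x^\star = b^-/\|b^-\|$. This point is nonnegative and unit-norm; moreover its support is contained in that of $b^-$, which is disjoint from the support of $b^+$, so $(b^+)^\T x^\star = 0$, while $x^\star$ is a positive multiple of $b^-$ so Cauchy--Schwarz is tight. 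Hence $b^\T x^\star = -\|b^-\|$ matches the lower bound, proving optimality.

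For Case 2 ($b^- = 0$, i.e., $b\geq 0$), I would use the elementary inequality $\|x\|_1 \geq \|x\|_2 = 1$, valid for any nonnegative unit-norm $x$, to write
$$
b^\T x \;=\; \sum_j b_j x_j \;\geq\; \big(\min_j b_j\big)\sum_j x_j \;\geq\; \min_j b_j,
$$
and then verify that $x^\star = e_i$ with $i = \argmin_j b_j$ is feasible and attains this bound.

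The main (quite mild) obstacle is the bookkeeping around the equality conditions in Case 1: one needs the single candidate $b^-/\|b^-\|$ to simultaneously saturate Cauchy--Schwarz and annihilate the $(b^+)^\T x$ term, and both rely precisely on the disjoint-support property of $b^+$ and $b^-$. Apart from this, the argument is a direct optimization over the intersection of the unit sphere and the nonnegative orthant, and uses no structure of $b$ beyond its sign pattern.
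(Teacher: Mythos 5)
Your proof is correct. Note that the paper itself does not prove this lemma --- it is imported verbatim by citation from \cite{CP-comm} --- so there is no in-paper argument to compare against; your two-case verification (saturating Cauchy--Schwarz on the $b^-$ part while killing the $(b^+)^\top x$ term via disjoint supports when $b^-\neq 0$, and the $\|x\|_1\geq\|x\|_2$ bound when $b\geq 0$) is a complete and self-contained justification of the stated closed form.
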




\subsection{The $\ell_q$-regularized sparse tensor PCA}

As we discussed at the beginning of Section~\ref{introduction}, the tensor principal component analysis (or multilinear principal component analysis (MPCA)) has been a popular subject of study in recent years. Below, we shall discuss a sparse version of this problem.

Suppose that we are given a collection of order-$d$ tensors $\bT^{(1)},\bT^{(2)},...,\bT^{(N)}\in\RR^{n_1\times n_2\times\cdots\times n_d}$. 
The sparse MPCA problem can be formulated as (see also \cite{YYX}):
\begin{eqnarray*}
& \min & \sum_{i =1}^{N}\|\bT^{(i)}-\bC^{(i)}\times_1U_1\times\cdots\times_dU_d\|_F^2 + \alpha_1\sum_{i=1}^{N}\|\bC^{(i)}\|_p^p+\alpha_2\sum_{j=1}^{d}\|U_j\|_q^q\nonumber\\
& \st & \bC^{(i)}\in\RR^{m_1\times\cdots\times m_d}, i = 1,...,N  \\
&   & U_j\in\RR^{n_j\times m_j}, U_j^\T U_j = I, j = 1,...,d.
\end{eqnarray*}


In order to apply our developed algorithms, we can consider the following variant of sparse MPCA:
\begin{equation} \label{prob:MPCA}
\begin{array}{ll}
\min & \sum_{i=1}^{N} \|\bT^{(i)}-\bC^{(i)}\times_1U_1\times\cdots\times_dU_d\|_F^2 + \alpha_1\sum_{i=1}^{N}\|\bC^{(i)}\|_p^p+\alpha_2\sum_{j=1}^{d}\|V_j\|_q^q+\frac{\mu}{2}\sum_{j=1}^{d}\|Y_j\|^2 \\
\st  & \bC^{(i)}\in\RR^{m_1\times\cdots\times m_d}, i = 1,...,N  \\
     & U_j\in\RR^{n_j\times m_j}, U_j^\T U_j = I, j = 1,...,d \\
     & V_j - U_j+Y_j=0, j = 1,...,d.
\end{array}
\end{equation}
Note that this model is different from the ones used in \cite{SMPCA1,SMPCA2}.

Denote $\bT_{(j)}^{(i)}$ to be the mode-$j$ unfolding of a tensor $\bT^{(i)}$, and denote $\bC$ to be the set of all tensors $\{\bC^{(i)}: i = 1,...,N\}$. 
The augmented Lagrangian function of \eqref{prob:MPCA} is
\begin{eqnarray*}
	L_{\beta}(\bC, U, V, Y, \Lambda) & = & \sum_{i =1}^{N} \|\bT^{(i)}-\bC^{(i)}\times_1U_1\times\cdots\times_dU_d\|_F^2 + \alpha_1\sum_{i=1}^{N}\|\bC^{(i)}\|_p^p+\alpha_2\sum_{j=1}^{d}\|V_j\|_q^q \\
	& &+\frac{\mu}{2}\sum_{j=1}^{d}\|Y_j\|^2 -\sum_{j=1}^{d}\langle U_j-V_j+Y_j, \Lambda_j\rangle +\frac{\beta}{2}\sum_{j=1}^{d}\| U_j-V_j+Y_j\|_F^2 .
\end{eqnarray*}

An implementation of the Algorithm \ref{alg:PADMM} for solving \eqref{prob:MPCA} is shown in Algorithm \ref{alg:mpca}.

\begin{algorithm2e}
	\caption{A typical iteration of Algorithm \ref{alg:PADMM} for solving \eqref{prob:MPCA}}\label{alg:mpca}
	[Step 1] 	\For{$j = 1,...,d$ }{
		Set $B = \sum_{i=1}^{N}\bT_{(j)}^{(i)}(U_d\otimes\cdots\otimes U_{j+1}\otimes U_{j-1}\otimes\cdots\otimes U_1)(\bC_{(j)}^{(i)})^\T 
+\frac{1}{2}\Lambda_j-\frac{\beta}{2}Y_j+\frac{\beta}{2}V_j+\frac{\sigma}{2}U_j$ \\
		$U_j\leftarrow\argmin_{U^\T U=I} -\langle2B,U\rangle$
	}
	
	[Step 2] 	\For{$j = 1,...,d$ }{
		For each component $V_j(s)$ where $s = (s_1,s_2)$ is a multilinear index, \\
		set $b = \beta Y_j(s)+\beta U_j(s)-\Lambda_j(s)+\sigma V_j(s).$ \\
		$V_j(s) = \argmin_x \frac{\beta+\sigma}{2}x^2+\alpha_2|x|^q-bx$
	}
	
	[Step 3] \For{$i = 1,...,N$ }{
		For each component $\bC^{(i)}(s)$, where $s = (s_1,...,s_d)$ is a multilinear index, \\
		set $b = \sigma\bC^{(i)}(s)-2\left[(U_d^\T \otimes\cdots\otimes U_1^\T )\mathrm{vec}(\bT^{(i)})\right](s)$.
		$\bC^{(i)}(s)\leftarrow \argmin_x \frac{2+\sigma}{2}x^2+\alpha_1|x|^q-bx$
	}
	[Step 4] \For{$j = 1,...,d$ }{
		$Y_j\leftarrow Y_j - \eta\left[(\beta+\mu)Y_j-\beta U_j-\beta V_j -\Lambda_j\right]$
	}
	[Step 5] \For{$j = 1,...,d$ }{
		$\Lambda_j\leftarrow \Lambda_j - \beta\left(U_j-V_j+Y_j\right)$
	}
\end{algorithm2e}

In Step 1 of Algorithm \ref{alg:mpca}, the subproblem to be solved is
\be
\label{subprob:stiefel}
U_j = \argmin_{U^\T U=I} -\langle2B,U\rangle = \argmin_{U^\T U=I} \|B-U\|_F^2,
\ee
which is known as the nearest orthogonal matrix problem. Suppose we have the SVD decomposition of the matrix B as $B = Q\Sigma P^\T$, then the global optimal solution is $U_j = QP^\T$. When $B$ has full column rank, the solution is also unique.


In Steps 2 and 3 of Algorithm \ref{alg:mpca}, they are actually a group of one-dimensional decoupled problems. Since no nonnegative constraints are imposed, we can apply $\ell_1$ regularization for which soft-thresholding gives closed form solution to the subproblems. However, if we want to apply $\ell_q$ refularization for $0<q<1$, then the subproblem amounts to solve
\bea
\label{prob:L_q}
\min f(x) = ax^2+bx+c|x|^q,
\eea
where $0<q<1$, $a>0$, $c>0$. The function is nonconvex and nonsmooth at $0$ with $f(0) = 0$. For $x>0$, we can take the derivative and set it to 0, and obtain $2ax+qcx^{q-1}+b = 0,$ or equivalently
$$2ax^{2-q}+bx^{1-q}+cq = 0.$$
If $q = \frac{1}{2}$, then setting $z = \sqrt{x}$ leads to
$2az^3+bz+cq = 0.$
If $q = \frac{2}{3}$, then setting $z = x^{\frac{1}{3}}$ leads to
$2az^4+bz+cq = 0.$ In both cases, we have closed-form solutions. Similarly, we apply this trick to the case when $x<0$. Suppose we find the roots $x_1,...,x_k$ and we set $x_0 = 0$, then the solution to \eqref{prob:L_q} is $x_{i^*}$ with $i^* = \argmin_{0\leq j\leq k}f(x_j)$.

\begin{remark}\label{rm:Lq}
	\label{L_q_Modify}
The $\ell_q$ regularization is not locally Lipschitz at 0 when $0<q<1$, which might cause problems. However, if we replace $\|x\|^q$ with $\min\{|x|^q,B|x|\},B\gg0$, then the new regularization is locally Lipschitz on $\mathbb{R}$, and it differs from the original function only on  $(-\frac{1}{B^{1-q}},+\frac{1}{B^{1-q}})$. The closed-form solution can still be obtained by comparing the objective values at $x_1^* = \argmin_{x} ax^2+bx+c|x|^q$ and $x_2^* = \argmin_{x} ax^2+bx+cB|x| = \left(\frac{-cB-b}{2a}\right)_+$.
Actually due to the limited machine precision, the window $(-\frac{1}{B^{1-q}},+\frac{1}{B^{1-q}})$ shrinks to a single point $0$ when $B$ is sufficiently large. Since this causes no numerical difficulties, we can just deal with $\ell_q$ penalties by replacing it by the modified version.
\end{remark}

\subsection{The community detection problem}
Given any undirected network, the community detection problem aims to figure out the clusters, in other words the communities, of this network; see for example \cite{CMM,SCORE,OCCAM,CP-comm}, etc. A viable way to solve this problem is via the symmetric othorgonal nonnegative matrix approximation. Suppose the adjacency matrix of the network is $A$ , then the method aims to solve
\bea
\label{prob:community}
\min_{X\in\mathbb{R}^{n\times k}}\|A-XX^\top\|^2_F, \mbox{  \st  } X^\top X = I_{k\times k}, \mbox{  }X\geq 0,
\eea
where $n$ equals the number of nodes and $k$ equals the number of communities. When the network is connected, the orthogonality and nonnegativeness of the optimal solution $X^*$ indicate that there is exactly one positive entry in each row of $X^*$. Therefore we can reconstruct the community structure by letting node $i$ belong to community $j$ if $X^*_{ij}>0$.

In our framework, this problem can be naturally formulated as
\bea
\label{prob:com-ADMM}
& \min_{X,Y,Z\in\mathbb{R}^{n\times k}} & \|A - XX^\top\|_F^2 + \frac{\mu}{2}\|Z\|_F^2 \nonumber\\
& \st & X^\top X = I_{k\times k}, \mbox{  } Y \geq 0, \\
& & X-Y + Z = 0,\nonumber
\eea
where the orthogonal $X$ is forced to be equal to the nonnegative $Y$, while a slack variable $Z$ is added so that they do not need to be exactly equal. In the implementation of the Algorithm \ref{alg:PADMM-L}, two subproblems for block $X$ and $Y$ need to be solved. For the orthogonal block $X$, the subproblem is still in the form of \eqref{subprob:stiefel}. For the nonnegative block $Y$, the subproblem can be formulated as:
\be
Y^* = \arg\min_{Y\geq0} \|Y-B\|_F^2 = B_+,
\ee
for some matrix $B$. The notation $B_+$ is defined by $B_+ = \max\{B,0\}$, where the $\max$ is taken elementwise.

\section{Numerical Results}\label{sec:Num_rst}

\subsection{The maximum bisection problem}
We consider the following variant of maximum bisection problem to apply our proposed algorithm.
\[
\begin{array}{lll}
& \min_{U,z,x} & \langle W,UU^\top \rangle + \frac{\mu}{2}\|z\|^2 \nonumber \\
& \text{s.t.} & \|u_i\|^2 = 1, u_i\geq 0, \mbox{ for } i = 1,...,n,  \nonumber\\
& & \sum_{i=1}^nu_i - x\mathbf{1} + z = 0, \\
& & z\in\R^2 \mbox{ is free, } \frac{n}{2} - \nu \leq x \leq  \frac{n}{2} + \nu, \nonumber
\end{array}
\]
where $\nu\geq 0$ is a parameter that controls the tightness of the relaxation.
In our experiments, we set $\nu = 1$. We choose five graphs from the maximum cut library \emph{Biq Mac Library} \cite{biqmac-library} to test our algorithm, with the following specifics in Table \ref{tab:graph-Info}.

\begin{table}[!htbp]
	\centering
	\begin{tabular}{|c|c|c|c|c|c|}
		\hline
		\multicolumn{6}{|c|}{Graph Information}\\
		\hline
		Network & g05\_60.0 & g05\_80.0 & g05\_100.0 & pw01\_100.0 & pw09\_100.0\\
		\hline
		\# nodes& 60 & 80 & 100 & 100 & 100 \\
		\hline
		\# edges& 885 & 1580 & 2475 & 495 & 4455\\
		\hline
	\end{tabular}
	\caption{The test graph information.}\label{tab:graph-Info}
\end{table}

For the three tested algorithms, we denote the SDP relaxation proposed by Frieze \etal in \cite{Max-bisec:Frieze-1997} as SDP-F, we denote the SDP relaxation proposed by Ye in \cite{Max-bisec:Ye-2011} as SDP-Y, and we denote our low-rank relaxation as LR. The SDP relaxations are solved by the interior point method embedded in CVX \cite{cvx}. To solve the problem by our proposed Algorithm \ref{alg:PADMM}, we set $\mu = 0.01.$ Other parameters such as $\beta,\gamma,H_i = \sigma I$ are chosen according to our theories for given estimation of the Lipschitz constant $L$. For all cases, the number of iterations is set to 30. For each graph, all algorithms are tested for 20 times and then we compare their average cut values. The results are reported in Table \ref{tab:max-bisection}.

\begin{table}[!htbp]
	\centering
	\begin{tabular}{|c|c|c|c|c|c|c|}
		\hline
		Network & avg LR cut & SD & avg SDP-Y cut & $\text{ratio}_1$ & avg SDP-F cut & $\text{ratio}_2$\\
		\hline
		g05\_60.0& 1051.3 & 15.9773 & 1033.2 & 1.0175 & 1045.4 & 1.0056 \\
		\hline
		g05\_80.0& 1822.7 & 15.3180 & 1778.5 & 1.0249 & 1805.9 & 1.0093\\
		\hline
		g05\_100.0& 2810.2 & 19.4413 & 2775.7 & 1.0124 & 2799.8 & 1.0037\\
		\hline
		pw01\_100.0& 3946.8 & 28.5032 & 3889.7 & 1.0147 & 3944.3 & 1.0006\\
		\hline
		pw09\_100.0& 26863.2 & 102.1318 & 26609 &  1.0096 & 26764.1 & 1.0037\\
		\hline
	\end{tabular}
	\caption{The column \emph{SD} contains the standard deviations of the LR cut values in 20 rounds. $\text{ratio}_1 =\frac{\text{avg LR cut}}{\text{avg SDP-Y cut}}$, and $\text{ratio}_2=\frac{\text{avg LR cut}}{\text{avg SDP-F cut}}$.}\label{tab:max-bisection}
\end{table}
It is interesting to see that in all tested cases, our proposed relaxation solved by Algorithm \ref{alg:PADMM} outperforms the two SDP relaxations in \cite{Max-bisec:Frieze-1997,Max-bisec:Ye-2011}. Moreover, our method is a first-order method, and it naturally enjoys computational advantages compared to the interior-point based methods for solving the SDP relaxation.

Finally, in this application we test the performance of Algorithm \ref{alg:PADMM-L} by comparing it to Algorithm \ref{alg:PADMM}. We keep the parameters $\mu,\beta,\gamma,\nu$ 
unchanged
for testing Algorithm \ref{alg:PADMM-L}, but we reset $H_i = \sigma I$ according to its new bound in Theorem \ref{thm:PADMM-L}. For each graph, 20 instances are tested, and 30 iterations are performed for each algorithm. The objective measured is $\langle W,UU^\top \rangle$. The result is shown in Table \ref{table:admm-l-maxbis}. It can be observed that in this case, Algorithm \ref{alg:PADMM-L} behaves similarly as Algorithm~\ref{alg:PADMM}.

\begin{table}[h]
	\centering
	\begin{tabular}{|*{5}{c|}}
		\hline
		\multicolumn{1}{|c|}{\multirow{2}*{Network}}
		& \multicolumn{2}{|c|}{Algorithm \ref{alg:PADMM}} & \multicolumn{2}{|c|}{Algorithm \ref{alg:PADMM-L}}
		\\\cline{2-5}
		\multicolumn{1}{|c|}{}  &avg obj & SD &avg obj& SD  \\\hline
		g05\_60.0	 &  724.2    &   13.4070   &  719.7   &  12.3164\\\hline
		g05\_80.0	 &  1335    &   9.6791   &  1340.7   &  18.8766\\\hline
		g05\_100.0	 &  2136.1    &   24.6446   &  2135.5   &  18.8275\\\hline
		pw01\_100.0	 &  1558.8    &   78.0591   &  1563.7   &  76.5748\\\hline
		pw09\_100.0	 &  22262.8    &   100.1208   &  22371.3   &  119.8688\\\hline
	\end{tabular}
	\caption{Numerical performance of Algorithm \ref{alg:PADMM-L} for problem \eqref{prob:max-bisec-0}.}\label{table:admm-l-maxbis}
\end{table}

\subsection{The $\ell_q$ regularized sparse tensor PCA}
In this experiment, we synthesize a set of ground truth Tucker format tensors $\bT^{(i)}_{true} =  \bC^{(i)}\times_{1}U_1\times_2\cdots\times_{d}U_d$, where all $\bT^{(i)}_{true}$'s share the same factors $U_j$ while having different cores $\bC^{(i)}$. We test our methods by two cases, the first set of tensors have mode sizes $30\times30\times30$ and core mode sizes $5\times5\times5$. The second set of tensors have  mode sizes $42\times42\times42$ and core mode sizes $7\times7\times7$. For both cases, we generate 100 instances. We associate a componentwise Gaussian white noise $\bT^{(i)}_{noise}$ with standard deviation $0.001$ to each tensor. Namely, the input data are $\bT^{(i)} = \bT^{(i)}_{true} + \bT_{noise}^{(i)}, ~i = 1,...,100.$ For all cases, the core elements are generated by uniform distribution in $[-1,1]$. The sparsity level of each core $\bC^{(i)}$ is set to $0.3$, i.e., we randomly set 70\% of the elements to zero in each core. Finally, the orthogonal factors $U_i$ are generated with sparsity level $1/6$.

To solve \eqref{prob:MPCA}, we set the regularization terms to $\ell_{2/3}$ penalties for cores and to $\ell_{1}$ penalties for the factors. That is, $q=2/3$ and $p=1$ in \eqref{prob:MPCA}. The sparse penalty parameters are set to $\alpha_1 = 0.1$ and $\alpha_2 = 0.01$. We set $\mu = 10^{-6}$, and other parameters $\beta,\gamma,H_i = \sigma I$ are chosen according to our theories for given estimation of the Lipschitz constant $L$.

Our numerical results show that it is indeed necessary to set different regularizations for cores and factors. In the output of the result, the matrices $U_i$'s are definitely not sparse, but with plenty of entries very close to 0. The output $V_i$'s are very sparse but are not orthogonal. We construct the final output from $U_i$ by zeroing out all the entries with absolute value less than 0.001. Then the resulting matrices $\bar{U}_i$'s are sparse and are almost orthogonal. Finally, the relative error is measured using $\bar{U}_i$ and the underlying true tensor, i.e., $\frac{1}{100}\sum_{i=1}^{100}\frac{\|\bT^{(i)}_{true} - \bT^{(i)}_{out}\|^2}{\|\bT^{(i)}_{true}\|^2}$, where $\bT^{(i)}_{out}$'s are constructed from the output of the algorithms. The orthogonality violation is measured by $\frac{1}{3}\sum_{i=1}^{3}\|\bar{U}_i^\T \bar{U}_i-I\|_F$. In both cases, the iteration number is set to be 100. For each case, 10 instances are generated and we report the average performance in Table \ref{table:lq-MPCA}. The results are obtained from 20 randomly generated instances. The columns $err_1$, $SD$, $err_2$, $spars_1$, $spars_2$ denote the averaged objective relative errors, the standard deviation of the objective relative errors, the average orthogonality constraint violation, the average core sparse levels and the average factor sparse levels respectively. 

\begin{table}[h]
	\centering
	\begin{tabular}{|*{10}{c|}}
		\hline
		\multicolumn{5}{|c|}{$30\times30\times30 $, core $5\times5\times5$} & \multicolumn{5}{|c|}{$42\times42\times42 $, core $7\times7\times7$}
		\\\hline
		$avg~err_1$ & SD &$err_2$& $spars_1$ &$spars_2$&$err_1$ & SD &$err_2$& $spars_1$ &$spars_2$\\\hline
		0.0043 & 0.0028	 &  $2.7\times10^{-7}$ &  0.5363 &  1/6  &  0.0803 & 0.0010 & $1.2\times10^{-14}$  &  0.5387  & 1/6 \\\hline
	\end{tabular}
	\caption{Numerical performance of Algorithm \ref{alg:PADMM} for problem \eqref{prob:MPCA}. }
	\label{table:lq-MPCA}
\end{table}

\subsection{The community detection problem}
For this problem, we test our algorithm on three real world  social networks with ground truth information. They are the American political blogs network with 1222 nodes and 2 communities specified by their political leaning, the Caltech facebook network with 597 nodes and 8 communities specified by their dorm number, and the Simmons College facebook network with 1168 nodes and 4 communities specified by their graduation years. Note that \eqref{prob:com-ADMM} is a very simple model, so we will not compare it the more sophisticated models such as \cite{CMM,CP-comm}. Instead it is compared with the state-of-the-art spectral methods SCORE \cite{SCORE} and OCCAM \cite{OCCAM}.

In all tests for the three networks, the parameter $\mu$ is set to be 50 and $L$ is set to be $100$. The other parameters $\beta,\gamma,H_i = \sigma I$ are chosen according to our theories for a given estimation of $L$. For each network, every algorithm is run for 20 times and the average error rate is reported in Table \ref{tab:comm}.

\begin{table}[!htbp]
	\centering
	\begin{tabular}{|c||c|c|c|}
		\hline
		Network Name & Algorithm \ref{alg:PADMM-L} & SCORE & OCCAM \\
		\hline
		Polblogs& 5.07\% & \textbf{4.75\%} & 4.91\% \\
		\hline
		Caltech& \textbf{23.68\%} & 28.66\% & 34.21\%\\
		\hline
		Simmons& \textbf{20.61\%} & 22.54\% & 23.92\% \\
		\hline
	\end{tabular}
	\caption{Numerical performance of Algorithm \ref{alg:PADMM-L} for problem \eqref{prob:com-ADMM}.}\label{tab:comm}
\end{table}
It can be observed from the numerical results that Algorithm \ref{alg:PADMM-L} yields the best result in Caltech and Simmons College networks, and is only slightly outperformed in the political blogs network, which shows the effectiveness of our method for this problem.

\section{Conclusions}
In this paper we extend the framework studied in \cite{NcvxADMM:Zhang-etal-2016} and develop a proximal ADMM-like algorithm for nonsmooth and nonconvex multi-block optimization over Riemannian manifolds. It turns out that this model has a wide range of applications. The linearized and the stochastic as well as the curvilinear line-search-based variants of this algorithm are proposed to handle the situations where exact minimization is hard, or the function/gradient evaluation is expensive. For all the proposed algorithms, an $\mathcal{O}(1/\epsilon^2)$ iteration complexity is guaranteed. The numerical experiments show great potential of the proposed methods. It is worth noting that when the problem is not in the form of \eqref{prob:main},
then the reformulation proposed in Section \ref{sec:Extension} will in general lead to an increased iteration complexity.

\bibliography{Manifold-10}
\bibliographystyle{plain}

\appendix
\section{Proofs of the technical lemmas}

\subsection{Proof of Lemma \ref{lm:PADMM-lemma2}}

\begin{proof}
By the global optimality for the subproblems in Step 1 of Algorithm \ref{alg:PADMM}, we have
\be
\label{lm_:PADMM-lemma2:1}
\cL_{\beta}(\xke_1,\cdots,\xke_{N-1},\xk_N,\lambda^k)\leq \cL_{\beta}(\xk_1,\cdots,\xk_{N-1},\xk_N,\lambda^k) - \frac{1}{2}\sum_{i=1}^{N-1}\|\xk_i-\xke_i\|^2_{H_i}.
\ee

By Step 2 of Algorithm \ref{alg:PADMM} we have
\be
\label{lm_:PADMM-lemma2:2}
\cL_{\beta}(\xke_1,\cdots,\xke_{N-1},\xke_N,\lambda^k)\leq \cL_{\beta}(\xke_1,\cdots,\xke_{N-1},\xk_N,\lambda^k) + \left(\frac{L+\beta}{2}-\frac{1}{\gamma}\right)\|\xk_N-\xke_N\|^2.
\ee
By Step 3, directly substitute $\lambda^{k+1}$ into the augmented Lagrangian gives
\be
\label{lm_:PADMM-lemma2:3}
\cL_{\beta}(\xke_1,\cdots,\xke_N,\lambda^{k+1})= \cL_{\beta}(\xke_1,\cdots,\xke_N,\lambda^k) + \frac{1}{\beta}\|\lambda^k-\lambda^{k+1}\|^2.
\ee

Summing up \eqref{lm_:PADMM-lemma2:1}, \eqref{lm_:PADMM-lemma2:2}, \eqref{lm_:PADMM-lemma2:3}) and apply Lemma \ref{lm:PADMM-lemma1},
we obtain the following inequality,
\bea
\label{lm_:PADMM-lemma2:4}
& & \cL_{\beta}(\xke_1,\cdots,\xke_{N-1},\xke_N,\lambda^{k+1})- \cL_{\beta}(\xk_1,\cdots,\xk_{N-1},\xk_N,\lambda^k) \nonumber\\
& \leq & \left[\frac{L+\beta}{2}-\frac{1}{\gamma}+\frac{3}{\beta}\left(\beta-\frac{1}{\gamma}\right)^2\right]\|\xk_N-\xke_N\|^2   \\
& & +\frac{3}{\beta}\left[\left(\beta-\frac{1}{\gamma}\right)^2+L^2\right]\|\xkm_N-\xk_N\|^2 - \sum_{i = 1}^{N-1}\|\xk_i-\xke_i\|^2_{\frac{1}{2}H_i-\frac{3L^2}{\beta}I},\nonumber
\eea
which further indicates
\bea
& & \Psi_G(\xke_1,\cdots,\xke_{N-1},\xke_N,\lambda^{k+1},\xk_N) - \Psi_G(\xk_1,\cdots,\xk_{N-1},\xk_N,\lambda^{k},\xkm_N) \nonumber\\
&\leq& \left[\frac{\beta+L}{2}-\frac{1}{\gamma}+\frac{6}{\beta}\left(\beta-\frac{1}{\gamma}\right)^2+\frac{3L^2}{\beta}\right]\|\xk_N-\xke_N\|^2 \\
& & - \sum_{i = 1}^{N-1}\|\xk_i-\xke_i\|^2_{\frac{1}{2}H_i-\frac{3L^2}{\beta}I}. \nonumber
\eea

To ensure that the right hand side of \eqref{lm_:PADMM-lemma2:5} is negative, we need to choose $H_i\succ\frac{6L^2}{\beta}I$, and ensure that
\be\label{lm_:PADMM-lemma2:6}
\frac{\beta+L}{2}-\frac{1}{\gamma}+\frac{6}{\beta}\left(\beta-\frac{1}{\gamma}\right)^2+\frac{3L^2}{\beta}<0.
\ee
This can be proved by first viewing it as a quadratic function of $z = \frac{1}{\gamma}$. To find some $z>0$ such that
$$p(z) = \frac{6}{\beta}z^2 - 13z + \left(\frac{L+\beta}{2}+6\beta+\frac{3}{\beta}L^2\right)<0,$$
we need the discriminant to be positive, i.e.
\be
\label{delta}\Delta(\beta) = \frac{1}{\beta^2}(13\beta^2-12\beta L -72L^2)>0.
\ee
It is easy to verify that \eqref{beta}
suffices to guarantee \eqref{delta}.
Solving $p(z) = 0$, we find two positive roots
\[
z_{1} = \frac{13\beta-\sqrt{13\beta^2-12\beta L -72L^2}}{12}, \mbox{ and } z_{2} = \frac{13\beta+\sqrt{13\beta^2-12\beta L -72L^2}}{12}.
\]
Note that $\gamma$ defined in \eqref{gamma} satisfies $\frac{1}{z_2}<\gamma<\frac{1}{z_1}$ and thus guarantees \eqref{lm_:PADMM-lemma2:6}. This completes the proof.
\end{proof}

\subsection{Proof of Lemma \ref{lm:PADMM-L-lemma2}}

\begin{proof}
For the subproblem in Step 1 of Algorithm \ref{alg:PADMM-L}, since $x_i^{k+1}$ is the global minimizer, we have
\begin{eqnarray*}
	& & \langle\nabla_if(\xke_1,\cdots,\xke_{i-1},\xk_i,\cdots,\xk_N),\xke_i-\xk_i\rangle -\bigg\langle\sum_{j=1}^{i}A_j\xke_j+\sum_{j = i+1}^{N}A_j\xk_j-b,\lambda^k \bigg\rangle\\
	& & +\frac{\beta}{2}\bigg\|\sum_{j=1}^{i}A_j\xke_j+\sum_{j = i+1}^{N}A_j\xk_j-b\bigg\|^2  + \sum_{j=1}^{i}r_j(\xke_j)+\sum_{j = i+1}^{N-1}r_j(\xk_j)\\
	& \leq &
	-\bigg\langle\sum_{j=1}^{i-1}A_j\xke_j+\sum_{j = i}^{N}A_j\xk_j-b,\lambda^k \bigg\rangle +\frac{\beta}{2}\bigg\|\sum_{j=1}^{i-1}A_j\xke_j+\sum_{j = i}^{N}A_j\xk_j-b\bigg\|^2 \\
	& & + \sum_{j=1}^{i-1}r_j(\xke_j)+\sum_{j = i}^{N-1}r_j(\xk_j)-\frac{1}{2}\|\xke_i-\xk_i\|^2_{H_i}.
\end{eqnarray*}

By the $L$-Lipschitz continuity of $\nabla_i f$, we have 
\begin{eqnarray*}
	& & f(\xke_1,\cdots,\xke_{i},\xk_{i+1},\cdots,\xk_N)\\ &\leq& f(\xke_1,\cdots,\xke_{i-1},\xk_i,\cdots,\xk_N)
	+\langle\nabla_if(\xke_1,\cdots,\xke_{i-1},\xk_i,
\cdots,\xk_N),\xke_i-\xk_i\rangle \\
	& & +\frac{L}{2}\|\xke_i-\xk_i\|^2.
\end{eqnarray*}

Combining the above two inequalities and using the definition of $\cL_{\beta}$ in \eqref{Lagrangian}, we have
\be
\label{lm_:PADMM-L-lemma2:additional}
\cL_{\beta}(\xke_1,\cdots,\xke_{i},\xk_{i+1},\cdots,\xk_N,\lambda^k)\leq\cL_{\beta}(\xke_1,\cdots,\xke_{i-1},\xk_i,\cdots,
\xk_N,\lambda^k)-\|\xk_i-\xke_i\|^2_{\frac{H_i}{2}-\frac{L}{2}I}.
\ee
Summing \eqref{lm_:PADMM-L-lemma2:additional} over $i=1,\ldots,N-1$, we have the following inequality, which is the counterpart of \eqref{lm_:PADMM-lemma2:1}:
\be
\label{lm_:PADMM-L-lemma2:1}
\cL_{\beta}(\xke_1,\cdots,\xke_{N-1},\xk_N,\lambda^k)\leq\cL_{\beta}(\xk_1,\cdots,\xk_N,\lambda^k)-\sum_{i=1}^{N-1}\|\xk_i-\xke_i\|^2_{\frac{H_i}{2}-\frac{L}{2}I}.
\ee

Besides, since \eqref{lm_:PADMM-lemma2:2} and \eqref{lm_:PADMM-lemma2:3} still hold, by combining \eqref{lm_:PADMM-L-lemma2:1}, \eqref{lm_:PADMM-lemma2:2} and \eqref{lm_:PADMM-lemma2:3} and applying Lemma \ref{lm:PADMM-lemma1}, we establish the following two inequalities, which are respectively the counterparts of \eqref{lm_:PADMM-lemma2:4} and \eqref{lm_:PADMM-lemma2:5}:
\bea
\label{lm_:PADMM-L-lemma2:2}
& & \cL_{\beta}(\xke_1,\cdots,\xke_{N-1},\xke_N,\lambda^{k+1})- \cL_{\beta}(\xk_1,\cdots,\xk_{N-1},\xk_N,\lambda^k) \nonumber\\
& \leq & \left[\frac{L+\beta}{2}-\frac{1}{\gamma}+\frac{3}{\beta}\left(\beta-\frac{1}{\gamma}\right)^2\right]\|\xk_N-\xke_N\|^2   \\
& & +\frac{3}{\beta}\left[\left(\beta-\frac{1}{\gamma}\right)^2+L^2\right]\|\xkm_N-\xk_N\|^2 - \sum_{i = 1}^{N-1}\|\xk_i-\xke_i\|^2_{\frac{1}{2}H_i-\frac{L}{2}I-\frac{3L^2}{\beta}I},\nonumber
\eea
and
\bea
& & \Psi_G(\xke_1,\cdots,\xke_{N-1},\xke_N,\lambda^{k+1},\xk_N) - \Psi_G(\xk_1,\cdots,\xk_{N-1},\xk_N,\lambda^{k},\xkm_N) \nonumber\\
&\leq& \left[\frac{\beta+L}{2}-\frac{1}{\gamma}+\frac{6}{\beta}\left(\beta-\frac{1}{\gamma}\right)^2+\frac{3L^2}{\beta}\right] \|\xk_N-\xke_N\|^2 \nonumber\\
& & - \sum_{i = 1}^{N-1}\|\xk_i-\xke_i\|^2_{\frac{1}{2}H_i-\frac{L}{2}I-\frac{3L^2}{\beta}I}.\nonumber
\eea
From the proof of Lemma \ref{lm:PADMM-lemma2}, it is easy to see that the right hand side of the above inequality is negative, if
$H_i\succ \left(\frac{6L^2}{\beta}+L\right)I$
and $\beta$ and $\gamma$ are chosen according to \eqref{beta} and \eqref{gamma}.
\end{proof}

\subsection{Proof of Lemma \ref{lm:PADMM-S-lemma1}}

\begin{proof}
For the ease of notation, we denote
\be
\label{error}
G_i^M(x_1^{k+1},\ldots,x_{i-1}^{k+1},x_i^k,\ldots,x_N^k) = \nabla_i f(x_1^{k+1},\ldots,x_{i-1}^{k+1},x_i^k,\ldots,x_N^k)+\delta_i^k.
\ee
Note that $\delta_i^k$ is a zero-mean random variable. By Steps 2 and 3 of Algorithm \ref{alg:PADMM-S} we obtain
\be
\label{sto:1-1}
\lambda^{k+1} = \left(\beta-\frac{1}{\gamma}\right)(x_N^k-x_N^{k+1})+\nabla_Nf(\xke_1,\cdots,\xke_{N-1},\xk_N)+\delta_N^k.
\ee
Applying \eqref{sto:1-1} for $k$ and $k+1$, and using \eqref{sto:1-1}, we get
\bea
\|\lambda^{k+1}-\lambda^k\|^2 & = & \bigg\|\left(\beta-\frac{1}{\gamma}\right)(\xk_N-\xke_N)-\left(\beta-\frac{1}{\gamma}\right)(\xkm_N-\xk_N)+(\delta_N^k-\delta_N^{k-1})\nonumber\\
& & +(\nabla_Nf(\xke_1,\cdots,\xke_{N-1},\xk_N)-\nabla_Nf(\xk_1,\cdots,\xk_{N-1},\xkm_N)\bigg\|^2 \nonumber\\
& \leq & 4\left(\beta-\frac{1}{\gamma}\right)^2\|\xk_N-\xke_N\|^2+4\left[\left(\beta-\frac{1}{\gamma}\right)^2+L^2\right]\|\xkm_N-\xk_N\|^2 \nonumber \\
& & +4L^2\sum_{i=1}^{N-1}\|\xk_i-\xke_i\|^2 + 4\|\delta_N^k-\delta_N^{k-1}\|^2. \nonumber
\eea
Taking expectation with respect to all random variables
on both sides and using $\E[\langle\delta_N^k,\delta_N^{k-1}\rangle] = 0$ completes the proof.
\end{proof}

\subsection{Proof of Lemma \ref{lm:PADMM-S-lemma2}}

\begin{proof}
Similar as \eqref{lm_:PADMM-L-lemma2:additional}, by further incorporating \eqref{error}, we have
\begin{eqnarray*}
	& & \cL_{\beta}(\xke_1,\cdots,\xke_{i},\xk_{i+1},\cdots,\xk_N,\lambda^k)-\cL_{\beta}(\xke_1,\cdots,\xke_{i-1},\xk_i,\cdots,\xk_N,\lambda^k)\\
	& \leq & -\|\xk_i-\xke_i\|^2_{\frac{H_i}{2}-\frac{L}{2}I}+\langle\delta_i^k,\xke_i-\xk_i\rangle \\
	& \leq & -\|\xk_i-\xke_i\|^2_{\frac{H_i}{2}-\frac{L}{2}I}+\frac{1}{2}\|\delta_i^k\|^2+\frac{1}{2}\|\xke_i-\xk_i\|^2.
\end{eqnarray*}
Taking expectation  with respect to all random variables on both sides and summing over $i=1,\ldots,N-1$, and using \eqref{BSFO}, we obtain
\begin{eqnarray}
\label{sto:decrease1}
& & \E[\cL_{\beta}(\xke_1,\cdots,\xke_{N-1},\xk_N,\lambda^k)]-\E[\cL_{\beta}(\xk_1,\cdots,\xk_N,\lambda^k)] \\
& \leq & -\sum_{i=1}^{N-1}\E\left[\|\xke_i-\xk_i\|^2_{\frac{1}{2}H_i-\frac{L+1}{2}I}\right]+\frac{N-1}{2M}\sigma^2.\nonumber
\end{eqnarray}
Note that by the Step 2 of Algorithm \ref{alg:PADMM-S} and the descent lemma we have
\bea
0 & = & \bigg\langle x_N^k - x_N^{k+1}, \nabla_N f(x_1^{k+1},...,x_{N-1}^{k+1},x_N^k) + \delta_N^k - \lambda^k + \beta\left(\sum_{j=1}^{N-1}A_jx_j^{k+1}+x_N^k-b\right) - \frac{1}{\gamma}(x_N^k-x_N^{k+1})\bigg\rangle \nonumber\\
& \leq  & f(x_1^{k+1},...,x_{N-1}^{k+1},x_N^k) - f(x^{k+1}) + \left(\frac{L+\beta}{2}-\frac{1}{\gamma}\right)\|x_N^{k+1}-x_N^k\|^2 - \langle\lambda^k,x_N^k-x_N^{k+1}\rangle \nonumber\\
& & + \frac{\beta}{2}\|\sum_{j=1}^{N-1}A_jx_j^{k+1}+x_N^k-b\|^2 - \frac{\beta}{2}\|\sum_{j=1}^{N-1}A_jx_j^{k+1}+x_N^{k+1}-b\|^2 + \langle\delta_N^k,x_N^k-x_N^{k+1}\rangle \nonumber \\
& \leq &  \cL_{\beta}(\xke_1,\cdots,\xke_{N-1},\xk_N,\lambda^k) -  \cL_{\beta}(\xke,\lambda^k) + \left(\frac{L+\beta}{2}-\frac{1}{\gamma}+\half\right)\|\xk_N-\xke_N\|^2 + \half\|\delta_N^k\|^2. \nonumber
\eea
Taking the expectation with respect to all random variables yields
\bea
\label{sto:decrease2}
& & \E[\cL_{\beta}(\xke_1,\cdots,\xke_{N-1},\xke_N,\lambda^k)]- \E[\cL_{\beta}(\xke_1,\cdots,\xke_{N-1},\xk_N,\lambda^k)]\nonumber\\
&\leq&  \left(\frac{L+\beta}{2}-\frac{1}{\gamma}+\half\right)\E[\|\xk_N-\xke_N\|^2]+\frac{1}{2M}\sigma^2.
\eea
The following equality holds trivially from Step 3 of Algorithm \ref{alg:PADMM-S}:
\be
\label{sto:decrease3}
\E[\cL_{\beta}(\xke_1,\cdots,\xke_N,\lambda^{k+1})]-\E[\cL_{\beta}(\xke_1,\cdots,\xke_N,\lambda^{k})] = \frac{1}{\beta}\E[\|\lambda^k-\lambda^{k+1}\|^2].
\ee
Combining \eqref{sto:decrease1}, \eqref{sto:decrease2}, \eqref{sto:decrease3} and \eqref{sto:1}, we obtain
\bea
& & \E[\Psi_S(\xke_1,\cdots,\xke_{N-1},\xke_N,\lambda^{k+1},\xk_N)] - \E[\Psi_S(\xk_1,\cdots,\xk_{N-1},\xk_N,\lambda^{k},\xkm_N)] \label{Psi-S-decrease}\\
&\leq & \left[\frac{\beta+L}{2}-\frac{1}{\gamma}+\frac{8}{\beta}\left(\beta-\frac{1}{\gamma}\right)^2+\frac{4L^2}{\beta}+\half\right] \E [\|\xk_N-\xke_N\|^2] \nonumber\\
& & - \sum_{i = 1}^{N-1}\E\left[\|\xk_i-\xke_i\|^2_{\frac{1}{2}H_i-\frac{4L^2}{\beta}I-\frac{L+1}{2}I}\right]+\left(\frac{8}{\beta}+\half+\frac{N-1}{2}\right)\frac{\sigma^2}{M}.\nonumber
\eea
Choosing $\beta$ and $\gamma$ according to \eqref{sto:beta} and \eqref{sto:gamma}, and using the similar arguments in the proof of Lemma \ref{lm:PADMM-lemma2}, it is easy to verify that
$$\left[\frac{\beta+L}{2}-\frac{1}{\gamma}+\frac{8}{\beta}\left(\beta-\frac{1}{\gamma}\right)^2+\frac{4L^2}{\beta}+\half\right]<0.$$
By further choosing $H_i\succ\left(\frac{8L^2}{\beta}+L+1\right)I$, we know that the right hand side of \eqref{Psi-S-decrease} is negative, and this completes the proof.
\end{proof}

\subsection{Proof of Lemma \ref{lm:PADMM-S-lemma3}}

\begin{proof}
From \eqref{sto:1-1} and \eqref{eq:assumption-2-lips}, we have that
\beaa
& & \cL_{\beta}(\xke_1,\cdots,\xke_N,\lambda^{k+1})\nonumber\\
& = & \sum_{i = 1}^{N-1}r_i(\xke_i) + f(\xke) - \bigg\langle \sum_{i = 1}^NA_i\xke_i-b, \nabla_Nf(\xke) +  \left(\beta-\frac{1}{\gamma}\right)(\xk_N-\xke_N) \nonumber\\
& & + \nabla_Nf(\xke_1,...,\xke_{N-1},\xk_N) - \nabla_Nf(\xke)+\delta_N^k\bigg\rangle + \frac{\beta}{2}\bigg\|\sum_{i = 1}^NA_i\xke_i-b\bigg\|^2  \nonumber\\
& \geq & \sum_{i = 1}^{N-1}r_i(\xke_i) + f(\xke_1,...,\xke_{N-1}, b-\sum_{i=1}^{N-1}A_i\xke_i) -\frac{4}{\beta}\left[\left(\beta-\frac{1}{\gamma}\right)^2+L^2\right]\|\xk_N-\xke_N\|^2          \nonumber  \\
& &  + \bigg(\frac{\beta}{2}-\frac{\beta}{8}-\frac{\beta}{8}-\frac{L}{2}\bigg)\bigg\|\sum_{i = 1}^NA_i\xke_i-b\bigg\|^2  - \frac{2}{\beta}\|\delta_N^k\|^2    \nonumber      \\
& \geq & \sum_{i=1}^{N-1}r_i^*+f^*-\frac{4}{\beta}\left[\left(\beta-\frac{1}{\gamma}\right)^2+L^2\right]\|\xk_N-\xke_N\|^2 - \frac{2}{\beta}\|\delta_N^k\|^2 \nonumber\\
\eeaa
where the first inequality is obtained by applying
$\langle a, b\rangle\leq\half(\frac{1}{\eta}\|a\|^2+\eta \|b\|^2)$ to terms $\langle \sum_{i = 1}^NA_i\xke_i-b,  \left(\beta-\frac{1}{\gamma}\right)(\xk_N-\xke_N)\rangle$, $\langle \sum_{i = 1}^NA_i\xke_i-b, \nabla_Nf(\xke_1,...,\xke_{N-1},\xk_N) - \nabla_Nf(\xke)\rangle$ and $\langle \sum_{i = 1}^NA_i\xke_i-b,\delta_N^k\rangle$ respectively with $\eta = \frac{8}{\beta}, \frac{8}{\beta}$ and $\frac{4}{\beta}$. Note that $\beta>2L$ according to \eqref{sto:beta}, thus $(\frac{\beta}{2}-\frac{\beta}{8}-\frac{\beta}{8}-\frac{L}{2})>0$ and the last inequality holds. By rearranging the terms and taking expectation with respect to all random variables completes the proof.
\end{proof}

\subsection{Proof for Theorem \ref{thm:retraction}}
\begin{proof}
	Through similar argument, one can easily obtain
	$$\|\lambda^{k+1} - \nabla_N f(\xke_1,\cdots,\xke_N)\|^2\leq \kappa_2\theta_k\quad \mbox{ and }\quad \left\|\sum_{i=1}^{N-1}A_i\xke_i+\xke_N-b\right\|^2 \leq \kappa_1\theta_k.$$
	The only remaining task is to guarantee an $\epsilon$ version of \eqref{opt_ADMM_retraction}. First let us prove that
	\be
	\label{gradient-bound}
	\|g_i^{k+1}\| \leq \frac{\sigma+2L_2C+(L+\beta A_{\max}^2)L_1^2}{2\alpha}\sqrt{\theta_{k}}.
	\ee
	Denote $h_i(x_i) = \cL_{\beta}(x^{k+2}_1,...,x^{k+2}_{i-1},x_i,x^{k+1}_{i+1},...,x^{k+1}_N,\lambda^{k+1})$ and $Y_i(t) = R(x^{k+1}_i,-tg_i^{k+1})$, then it is not hard to see that $\nabla h_i(x_i)$ is Lipschitz continuous with parameter $L+\beta \|A_i\|_2^2 \leq L_3:=L+\beta A_{\max}^2$. Consequently, it yields
	\beaa
	h_i(Y_i(t)) & \leq & h_i(Y_i(0)) + \langle \nabla h_i(Y_i(0)), Y_i(t) - Y_i(0) - tY_i'(0) + tY'_i(0)\rangle + \frac{L_3}{2} \|Y_i(t) - Y_i(0)\|^2 \\
	& \leq & h_i(Y_i(0)) + t\langle\nabla h_i(Y_i(0)),Y_i'(0)\rangle + L_2t^2\|\nabla h_i(Y_i(0))\|\|Y'_i(0)\|^2 + \frac{L_3L_1^2}{2}t^2\|Y'_i(0)\|^2 \\
	& = & h_i(Y_i(0)) - \left(t-L_2t^2\|\nabla h_i(Y_i(0))\| - \frac{L_3L_1^2}{2}t^2\right)\|Y'_i(0)\|^2,
	\eeaa
	where the last equality is due to $\langle\nabla h_i(Y_i(0)),Y_i'(0)\rangle = -\langle Y_i'(0),Y_i'(0)\rangle$. Also note the relationship $$\|Y_i'(0)\| = \|g_i^{k+1}\|= \|\Proj_{\cT_{x_i^{k+1}}\mathcal{M}_i}\big\{\nabla h_i(Y_i(0))\big\}\| \leq \|\nabla h_i(Y_i(0))\|.$$
	
	Note that
	$\left\|\sum_{i=1}^{N-1}A_i\xke_i+\xke_N-b\right\| \leq \sqrt{\kappa_1\theta_k}\leq \sqrt{\frac{\kappa_1}{\tau}(\Psi_G(x_1^1,...,x_N^1,\lambda^1,x_N^0)-f^*)}.$ Because $\cM_i, i = 1,...,N-1$ are all compact manifolds, $\xke_i, i = 1,...,N-1$ are all bounded. Hence the whole sequence $\{x_N^{k}\}$ is also bounded. By \eqref{To-hard-to-give-a-name-TAT} (which also holds in this case),
	$$\|\lambda^{k+1}\| \leq  |\beta-\frac{1}{\gamma}|\sqrt{\theta_k}+\|\nabla_Nf(\xke_1,\ldots,\xke_{N-1},\xk_N)\|.$$
	By the boundedness of $\{(\xk_1,\ldots,\xk_N)\}$ and the continuity of $\nabla f(\cdot)$, the second term is bounded. Combining the boundedness of $\{\theta_k\}$, we know that whole sequence $\{\lambda^k\}$ is bounded. Consequently, there exists a constant $C>0$ such that $\|\nabla h_i(Y_i(0))\|\leq C,$
	where
	$$\nabla h_i(Y_i(0)) = \nabla_if(x_1^{k+2},...,x^{k+2}_{i-1},\xke_i,...,\xke_N) - A_i^\top\lambda^{k+1}+\beta A_i^\top\bigg(\sum_{j=1}^{i-1}A_jx^{k+2}_j+\sum_{j = i}^N A_j\xke_j - b\bigg)\bigg).$$
	
	Note that this constant $C$ depends only on the first two iterates $\{x_1^t,...,x_N^t,\lambda^t\}, t = 0,1,$ except for the absolute constants such as $\|A_i\|_2,i = 1,...,N$.  Therefore, when
    $$t\leq \frac{2}{2L_2C+\sigma+L_3L_1^2}\leq \frac{2}{2L_2\|\nabla h_i(Y_i(0))\|+\sigma+L_3L_1^2},$$
	it holds that
	$$h_i(Y_i(t))\leq h_i(x^{k+1}_i) - \frac{\sigma}{2}t^2\|g_i^{k+1}\|^2.$$

	Note that $\sigma>\frac{2\alpha}{s}$, by the terminating rule of the line-search step, we have
	$$t_i^k\geq\min\left\{s, \frac{2\alpha}{2L_2C+\sigma+L_3L_1^2}\right\} = \frac{2\alpha}{2L_2C+\sigma+L_3L_1^2}.$$
	Then by noting $$\frac{2\alpha\|g_i^{k+1}\|}{2L_2C+\sigma+L_3L_1^2}\leq t_i^{k+1}\|g_i^{k+1}\|\leq \sqrt{\theta_k},$$  we have \eqref{gradient-bound}.

Now let us discuss the issue of \eqref{opt_ADMM_retraction}. By definition,
	$$g_i^{k+1} = \Proj_{\mathcal{T}_{\xke_i}\mathcal{M}_i}\bigg\{\nabla_if(x_1^{k+2},...,x^{k+2}_{i-1},\xke_i,...,\xke_N) - A_i^\top\lambda^{k+1}+\beta A_i^\top\bigg(\sum_{j=1}^{i-1}A_jx^{k+2}_j+\sum_{j = i}^N A_j\xke_j - b\bigg)\bigg\}.$$
	Consequently, we obtain
	\begin{eqnarray}
	& &   \biggl\|\Proj_{\cT_{x_i^{k+1}}\mathcal{M}_i}\biggl\{\nabla_i f(x^{k+1})-A_i^\top\lambda^{k+1}\biggr\}\biggr\| \nonumber\\
	& = & \biggl\|\Proj_{\cT_{x_i^{k+1}}\mathcal{M}_i}\biggl\{\nabla_i f(x^{k+1})-\nabla_if(x_1^{k+2},\cdots, x_{i-1}^{k+2},x_{i}^{k+1},\cdots,x_{N}^{k+1}) + g_i^{k+1} \nonumber\\
	& & - \beta A_i\left(\sum_{j=1}^NA_jx_j^{k+1}-b\right) + \beta A_i^\T \left(\sum_{j = 1}^{i-1}A_j(x_j^{k+1}-x_j^{k+2})\right)
	\biggr\}\biggr\|  \nonumber\\
	&\leq& \|\nabla_i f(x^{k+1})-\nabla_if(x_1^{k+2},\cdots, x_{i-1}^{k+2},x_{i}^{k+1},\cdots,x_{N}^{k+1})\|+ \|\beta A_i(\sum_{j=1}^NA_jx_j^{k+1}-b)\| \nonumber\\
	& & + \|g_i^{k+1}\| +\|\beta A_i^\T (\sum_{j = i+1}^{N}A_j(x_j^{k+1}-x_j^k) ) \| \nonumber \\
	& \leq & \left(L+\sqrt{N}\beta A_{\max}^2\right)\max\{L_1,1\}\sqrt{\theta_k} + \frac{\sigma+2L_2C+(L+\beta A_{\max}^2)L_1^2}{2\alpha}\sqrt{\theta_{k}}  + \beta\|A_i\|_2 \sqrt{\kappa_1\theta_k} \nonumber\\
	& \leq & \sqrt{\kappa_3\theta_{k}}. \nonumber
	\end{eqnarray}
	
\end{proof}

\subsection{Proof for inequality \eqref{Jacobi}}

\begin{proof}
First, we need to figure out the Lipschitz constant of $\bar{f}_{\beta}$.
\bea
& & \|\nabla\bar{f}_{\beta}(x)-\nabla\bar{f}_{\beta}(y)\| \nonumber\\
&\leq& L\|x-y\| + \beta\left\| \left[ \left(\sum_{j =1}^NA_j(x_j-y_j)\right)^\T A_1,\cdots,\left(\sum_{j =1}^NA_j(x_j-y_j)\right)^\T A_N\right]
\right\| \\
&\leq& L\|x-y\| + \beta\sqrt{N}\max_{1\leq i\leq N}\|A_i\|_2\left\| \sum_{j =1}^NA_j(x_j-y_j)
\right\| \nonumber\\
& \leq &  \left(L+\beta N\max_{1\leq i\leq N}\|A_i\|_2^2 \right)\|x-y\|. \nonumber
\eea
So we define $\hat{L} = L+\beta N\max_{1\leq i\leq N}\|A_i\|_2^2 $ as the Lipschitz constant for
function $\bar{f}_{\beta}.$
The global optimality of the subproblem \eqref{Jacobi-subproblems} yields
\[
\langle\nabla_i\bar{f}_{\beta}(\xk_1,\cdots,\xk_N),\xke_i-\xk_i\rangle-\langle\lambda^k,A_i\xke_i\rangle + r_i(\xke_i)+\frac{1}{2}\|\xke_i-\xk_i\|^2_{H_i}
 \leq  r_i(\xk_i) - \langle\lambda^k,A_i\xk_i\rangle .
\]
By the descent lemma we have
\bea
& & \cL_{\beta}(\xke_1,\cdots,\xke_{N-1},\xk_N,\lambda^k)\nonumber\\
& = & \bar{f}_{\beta}(\xke_1,\cdots,\xke_{N-1},\xk_N) -\langle\lambda^k,\sum_{i=1}^{N}A_i\xke_i-b\rangle +\sum_{i=1}^{N-1}r_i(\xke_i) \nonumber\\
& \leq & \bar{f}_{\beta}(\xk_1,\cdots,\xk_{N-1},\xk_N) +\langle\nabla\bar{f}_{\beta}(\xk_1,\cdots,\xk_{N-1},\xk_N),\xke-\xk\rangle \nonumber\\
& & \frac{\hat{L}}{2}\|x^{k+1}-x^k\|^2-\langle\lambda^k,\sum_{i=1}^{N}A_i\xke_i-b\rangle +\sum_{i=1}^{N-1}r_i(\xke_i). \nonumber
\eea
Combining the above two inequalities yields \eqref{Jacobi}.
\end{proof}

\end{document}